\documentclass{article}
\usepackage{amsthm}
\theoremstyle{definition}
\newtheorem{definition}{Definition}
\newtheorem{remark}{Remark}

\theoremstyle{theorem}
\newtheorem{proposition}{Proposition}
\newtheorem{lemma}{Lemma}
\newtheorem{theorem}{Theorem}
\newtheorem{corollary}{Corollary}
\theoremstyle{definition}

\newtheorem{fact}{Fact}
\pdfoutput=1

\newcommand{\cf}{cf.\ }
\newcommand{\st}{s.t.\ }

\usepackage{xspace}
\newcommand{\etc}{etc.\xspace}
\newcommand{\eg}{e.g.,\xspace}
\newcommand{\ie}{i.e.,\xspace}

\usepackage{array}
\usepackage[pdftex,dvipsnames]{color}
\usepackage{ifthen,calc,url}

\usepackage{amstext,amsmath,amssymb,stmaryrd,mathrsfs}
\usepackage[all,cmtip]{xy}
\usepackage{rotating}

\newcommand{\defeq}{\mathrel{:=}}
\newcommand{\defiff}{\mbox{:iff}}
\newcommand{\bnfeq}{\mathrel{::=}}
\newcommand{\bnfor}{\ \big| \ }

\newcommand{\set}[1]{\{#1\}}
\newcommand{\setst}[2]{\{\ #1\ \boldsymbol{|}\ #2\ \}}

\newcommand{\powerset}[1]{2^{#1}}

\newcommand{\true}{\text{true}}
\newcommand{\false}{\text{false}}

\newcommand{\Scomb}{\mathtt{S}}
\newcommand{\Kcomb}{\mathtt{K}}

\newcommand{\agents}{\mathcal{A}}
\newcommand{\CM}{\mathtt{CM}}

\newcommand{\community}{\mathcal{C}}

\newcommand{\messages}{\mathcal{M}}
\newcommand{\data}{\mathcal{D}}

\newcommand{\pair}[2]{(#1,#2)}

\newcommand{\derives}[3]{#3\vdash_{#1}#2}

\newcommand{\states}{\mathcal{S}}

\newcommand{\gscc}[2]{\mathtt{succ}_{#1}^{#2}}
\newcommand{\msgs}[1]{\mathrm{msgs}_{#1}}

\newcommand{\indist}[3]{#2\equiv_{#1}#3}

\newcommand{\pAccess}[3]{\mathrel{_{#1}\negthinspace\mathrm{R}_{#2}^{#3}}}
\newcommand{\access}[3]{\mathrel{_{#1}\negthinspace\mathcal{R}_{#2}^{#3}}}

\newcommand{\clo}[2]{\mathrm{cl}_{#1}^{#2}}
\newcommand{\Clo}[2]{\mathrm{Cl}_{#1}^{#2}}

\newcommand{\pFormulas}{\mathcal{L}}

\renewcommand{\true}{\top}
\renewcommand{\false}{\bot}

\newcommand{\relsym}[1]{\thinspace{#1}\thinspace}
\newcommand{\knows}[2]{#1\relsym{\mathsf{k}}#2}

\newcommand{\limp}{\rightarrow}
\newcommand{\lequiv}{\leftrightarrow}

\newcommand{\pproves}[4]{#1\relsym{{:}_{#3}^{#4}}#2}

\newcommand{\decides}[4]{#1\relsym{{\veebar}_{#3}^{#4}}#2}

\newcommand{\aModalFrame}{\mathfrak{S}}

\usepackage{colortbl} 

\newcommand{\proves}[4]{#1\relsym{::_{#3}^{#4}}#2}

\newcommand{\LIiP}{\mathrm{LIiP}}
\newcommand{\LIiPded}{\vdash_{\LIiP}}
\newcommand{\LIiPdedBis}{\mathrel{{\dashv}{\vdash}_{\LIiP}}}

\newcommand{\Iproves}[4]{#1\relsym{{\pm}_{#3}^{#4}}#2}
\newcommand{\Iproofdiamond}[4]{#1\relsym{{\mp}_{#3}^{#4}}#2}

\newcommand{\LDiiP}{\mathrm{LDiiP}}

\newcommand{\LDiiPded}{\vdash_{\LDiiP}}

\newcommand{\canrel}[3]{\mathrel{_{#1}\negthinspace\mathrm{C}_{#2}^{#3}}}
\newcommand{\canVal}{\mathcal{V}_{\mathsf{C}}}
\newcommand{\canModel}{\mathfrak{M}_{\mathsf{C}}}

\begin{document}
\title{Logic of Intuitionistic Interactive Proofs\\
		(Formal Theory of Perfect Knowledge Transfer)\thanks{%
	Work partially funded with 
		Grant AFR~894328 from the National Research Fund Luxembourg cofunded under 
			the Marie-Curie Actions of the European Commission (FP7-COFUND) \cite{ILIiP}.}}
\author{Simon Kramer\\[\jot]
		\texttt{simon.kramer@a3.epfl.ch}}
\maketitle

\begin{abstract}
We produce a decidable \emph{super-intuitionistic} normal modal logic of 
	internalised \emph{intuitionistic} (and thus disjunctive and monotonic) interactive proofs (LIiP) from
		an existing classical counterpart of 
			classical monotonic non-disjunctive interactive proofs (LiP).
Intuitionistic interactive proofs effect 
	a durable epistemic impact in the possibly adversarial communication medium CM (which is imagined as a distinguished agent)   
		\emph{and only in that,} 
		that consists in the permanent induction of the \emph{perfect} and thus \emph{disjunctive} knowledge of their  
			proof goal by means of CM's knowledge of the proof: 
				If CM knew my proof 
				then CM would persistently and also \emph{disjunctively} know that my proof goal is true.
So intuitionistic interactive proofs effect 
	a lasting transfer of disjunctive propositional knowledge (disjunctively knowable facts) in 
		the communication medium of multi-agent distributed systems via 
		the transmission of certain individual knowledge (knowable intuitionistic proofs).
Our (necessarily) CM-centred notion of proof is also 
	a disjunctive explicit refinement of KD45-belief, and yields also such 
	a refinement of standard S5-knowledge.
Monotonicity but not communality is a commonality of LiP, LIiP, and their internalised notions of proof.
As a side-effect, 
	we offer a short internalised proof of 
			the Disjunction Property of 
				Intuitionistic Logic (originally proved by G\"odel).

\medskip
\noindent
\textbf{Keywords:} 
	agents as proof-checkers;
	communication networks; 
	constructive Kripke-semantics;
	disjunctive explicit doxastic \& epistemic logic; 
	interactive \& oracle computation; 
	interpreted communication; 
	intuitionistic modal logic; 
	multi-agent distributed systems; 
	proofs as sufficient evidence. 
\end{abstract}

\section{Introduction}\label{section:Introduction}
\paragraph{Subject matter} The subject matter of this paper is 
	normal modal logic of internalised monotonic interactive proofs, \ie 
		a novel super-intuitionistic normal modal logic of 
			internalised intuitionistic (and thus disjunctive and monotonic) interactive proofs (LIiP) as well as 
		an existing classical normal modal logic of 
			internalised classical monotonic (and thus non-disjunctive) interactive proofs (LiP) \cite{LiP,KramerICLA2013}.
(We abbreviate interactivity-related adjectives with lower-case letters.) 
Recall from \cite{sep-logic-intuitionistic} that 
	a super-intuitionistic propositional logic is 
		any consistent collection of propositional formulas 
			that contains all the axioms of Intuitionistic Propositional Logic (IL) and 
			that is closed under \emph{modus ponens} and substitution of arbitrary formulas for proposition letters.
Note however that the \emph{language} of IL is a strict subset of the (propositionally modal) language of LIiP.

\paragraph{Goal} Our goal here is to produce LIiP axiomatically as well as semantically from LiP.
The process of constructing LIiP from LiP is presented for the sake of gaining insight into 
	the semantic connection between intuitionistic and classical interactive proofs, respectively.
LIiP, the result of the construction, however is independent of LiP.
Further note that like in \cite{LiP}, \cite{LiiP,KramerICLA2013}, and \cite{LDiiP,KramerIMLA2013}, 
	we still understand interactive proofs as \emph{sufficient evidence} for 
		intended resource-unbounded proof-checking agents (who are though unable to guess), and 
			leave probabilistic and polynomial-time resource bounded agents 
				for future work.
Finally note that 
	we choose our meta-logic to be classical 
		(singleton meta-universe or meta-world unicity, \cf Section~\ref{section:IL}).

\subsection{Motivation}
Our immediate motivation for LIiP is to complete  
	the picture of 
		our above-mentioned resource-unbounded propositional normal modal logics of 
			interactive proofs with 
					the missing variant of intuitionistic interactive proofs---see Table~\ref{table:picture}.
\begin{sidewaystable}
\centering
\caption{Proof-term properties}
\smallskip
\begin{tabular}{@{}|>{\bf}>{\centering}p{0.075\textwidth}|c|>{\centering}p{0.3\textwidth}|>{\centering}p{0.15\textwidth}|c|@{}}
	\cline{2-5}
	\multicolumn{1}{c|}{} & 
		\bf classicality & 
		\bf communality & 
		\bf disjunctivity (constructivity) & 
		\bf monotonicity \\
	\hline
	\rowcolor[gray]{0.75}
	LIiP & 
		\textbf{\emph{intuitionistic}} & 
		non-communal 
			(\emph{communication-medium}\\ or \emph{-adversary} centred\\  
				\cite[Chapter~16]{ComputerSecurity}) & 
		\emph{disjunctive,} negation-\emph{in}complete & 
		monotonic \\
	\hline
	LiP \textmd{\cite{LiP,KramerICLA2013}} & 
		classical & 
		common knowledge 
			(security-infrastructure or \emph{meta}-modelling:  
				clocks \cite[Chapter~16]{CryptographyEngineering}, 
				names \cite[Chapter~6]{SecurityEngineering}, 
				PKIs \cite[Chapter~18--20]{CryptographyEngineering}) & 
		non-disjunctive, negation-\emph{in}complete & 
		monotonic \\	
	\hline	
	LiiP
	\textmd{\cite{LiiP,KramerICLA2013}} & 
		classical & 
		common belief\\ 
			(security \emph{object}-models: 
				access control 
					\cite[Chapter~4]{SecurityEngineering}, 
					\cite[Chapter~5, 20]{ComputerSecurity};  
				data-base security \cite[Chapter~9]{ComputerSecurity};  
				communication protocols 
					\cite[Chapter~3]{SecurityEngineering}, 
					\cite[Chapter~16]{ComputerSecurity},
					\cite[Chapter~13]{CryptographyEngineering}) & 
		non-disjunctive, negation-\emph{in}complete & 
		non-monotonic \\
	\hline
	LDiiP
	\textmd{\cite{LDiiP,KramerIMLA2013}} & 
		classical & 
		non-communal 
			(\emph{arbitrary-single-agent centred:}  
				\emph{network}-adversaries   
					\cite[Chapter~21]{SecurityEngineering}, 
					\cite[Chapter~17]{ComputerSecurity}; 
				reference monitors \cite[Chapter~6]{ComputerSecurity}) & 
		\emph{disjunctive,} \textbf{\emph{negation-complete}} & 
		non-monotonic \\
	\hline
\end{tabular}
\label{table:picture}
\end{sidewaystable}
The overarching motivation for LIiP is to serve in 
	an intuitionistic foundation of interactive computation.
See \cite{LiP} for a programmatic motivation.
Table~\ref{table:picture} displays characteristic properties of 
	our interactive proofs as internalised in 
		their respective resource-unbounded propositional normal modal logic together with
			typical applications in information security.
The logics themselves, in contrast to their internalised proof terms, except LIiP are classical, \ie 
	monotonic and non-disjunctive (and thus negation-incomplete).
As a confirmation, 
	notice that disjunctivity is a necessary but not sufficient condition for intuitionism.

We recall and explain all this logical terminology in the next subsections, and thereby 
draw some inspiration from the quite different intuitionistic logic of 
	intuitionistic non-interactive proofs \cite{LIPArtemov}\label{page:LIP:1} and  
	from the informational views on modal and intuitionistic logic expressed in 
		\cite{ModalLogicInformation,InformationIL}.

\subsubsection{Intuitionistic Logic (IL)}\label{section:IL}
\paragraph{Definition}
From \cite{sep-logic-intuitionistic}, 
			recall that 
				Intuitionistic Propositional Logic (IL) can be succinctly described as 
					Classical Propositional Logic without the Aristotelian law of excluded middle (LEM): 
						$(A \vee \neg A)$, but with the law of contradiction 
						$(\neg A \limp (A \limp B))$, and that 
		intuitionistically, \emph{Reductio ad absurdum} only proves negative statements, 
			since $(\neg\neg A\limp A)$ does not hold in general.
(If it did, LEM would follow by \emph{modus ponens} from 
	the intuitionistically provable $\neg\neg(A\lor\neg A)$.)
Semantically, 
	IL (and LIiP) is perhaps best viewed as a modal logic \cite{KripkeSemanticsIL} 
		(\cf Definition~\ref{definition:KripkeModel} and Table~\ref{table:SatisfactionRelation}).
Therein, 
	\begin{itemize}
	\item the valuation function on atomic propositions is constrained to be  
			monotonic with respect to a partial order on system states (possible worlds);\footnote{%
				Incidentally, this monotonicity makes intuitionistic logic incompatible with 
					hybrid logic, whose 
						nominals are atomic propositions true at a single state \cite{HybridLogics}, 
							at least in the basic case where 
								the intuitionistic-logical universe of worlds coincides with 
								the hybrid-logical one.
				For more complex cases, see for example the work of Torben Bra\"uner and Valeria de Paiva.}
	\item the positive intuitionistic connectives (conjunction, disjunction) are interpreted as 
			their classical counterparts 
				(which conserve the monotonicity of atomic propositions) on 
					the current state; 
	\item the negative intuitionistic connectives (negation, implication) are interpreted as 
			their non-monotonic classical counterparts on
					\emph{the upset of} the current state \emph{with respect to 
						the partial order} 
							(and thus \emph{are made to} conserve the monotonicity of atomic propositions).\label{page:Monotonicity}
	\end{itemize}
Hence first, 
	intuitionistic negation and implication can be viewed as 
		classical negation and implication prefixed by a (unary) modality that is interpreted by 
			a (binary) partial-order accessibility relation (\eg a temporal reachability relation),
				respectively; and 
	second, intuitionistic facts, be they positive or negative, are necessarily 
		monotonic (durable, forward invariant, lasting, persistent, stable) in the state space \cite{InformationIL}.\label{page:invariants}
(Intuitionistic double negation can be interpreted temporally as 
	the forward invariant ``at some future time.'')
Plain classical logic is also but trivially monotonic, as  
	it can be viewed as a modal logic over a singleton state space.
From this modal viewpoint, 
	one immediately recognises why $(\neg\neg A\limp A)$ 
	(``true at some future time implies true now'') is valid classically.
				
\paragraph{Properties}
In the previous paragraph,
	we saw that 
		IL has the monotonicity property (``\emph{intuitionistic} implies \emph{monotonic}'').
IL has also the important disjunction property (``\emph{intuitionistic} implies \emph{disjunctive}'').
That is,
	any external intuitionistic notion of proof $\vdash_{I}$ has 
		the property that 
			$\vdash_{I} A\lor B$ implies $\vdash_{I} A$ or $\vdash_{I} B$.
Recall that 
	disjunctivity is a necessary but not sufficient condition for intuitionism, and that
	plain classical notions of proof do not have the disjunction property.
Now note that when internalised in an object-logical language, 
	\begin{itemize}
		\item an external intuitionistic notion of proof, say $\vdash_{I_{1}}$, becomes 
			a unary necessity modality, say $[M]$, 
				parametrised with a proof term $M\,;$ 
		\item the disjunction property of $\vdash_{I_{1}}$ becomes 
				a disjunctive property of 
					the internalising external notion of proof, say $\vdash_{I_{2}}$, that 
						$\vdash_{I_{2}}[M](\phi\lor\phi')\limp([M]\phi\lor[M]\phi')\,;$
		\item the monotonicity property becomes
				the similar property that 
					$\vdash_{I_{2}}[M]\phi\limp[(M,M')]\phi$, where 
						$(M,M')$ is the term pair constructed from $M$ and $M'$ with 
							$M'$ representing the additional data allowed by the monotonicity.
	\end{itemize}
Further note that 
	a normal modal logic that internalises an intuitionistic notion of proof is necessarily intuitionistic itself: 
		assume that 
			the external notion of proof, say $\vdash$, is classical, \ie $\vdash\phi\lor\neg\phi$, and
			deduce $\vdash[M](\phi\lor\neg\phi)$ by the normal-modal rule schema of necessitation that
				$\vdash \phi$ implies $\vdash[M]\phi\,.$
So the internalised notion of proof $[M]$ is classical too. 
Even a non-normal modal logic like \cite{LIPArtemov}\label{page:LIP:2} must be intuitionistic itself in order to
		be able to internalise an intuitionistic notion of proof, because 
			already \cite{LIPArtemov}'s weaker form of necessitation---that 
				$\vdash_{\text{ILP}} F$ implies that 
				\emph{there is a} proof term $t$ such that 
					$\vdash_{\text{ILP}} t:F$---forces external intuitionism.
Finally note that 
	IL is algorithmically decidable \cite{ILcomplexity}.

\subsubsection{Negation-complete logics} 
\paragraph{Definition} 
Recall that 
	a notion of proof, say $\vdash_{1}$, is \emph{negation-complete} by definition if and only if 
		$\vdash_{1}A$ or $\vdash_{1}\neg A$.
When internalised in an object language with an external notion of proof, say $\vdash_{2}$, 
	negation completeness becomes 
		$\vdash_{2}[M]\phi\lor[M]\neg\phi$. 
Though conveniently classical (LEM), 
	negation-complete logics have also 
		the property of having constructive and computational content.
			
\paragraph{Properties} 
From the detailed reminder in Section~1.1.1 of \cite{LDiiP,KramerIMLA2013}, 
	recall that 
		first, 
			the negation-completeness property implies 
				the discussed disjunction property (``\emph{negation-complete} implies \emph{disjunctive}'');
		second, any internalised negation-complete notion of proof $\vdash_{2}$ is non-monotonic, that is, 
				$\vdash_{2}[M]\phi\lor[M]\neg\phi$ implies 
				$\not\vdash_{2}[M]\phi\limp[(M,M')]\phi$
					(``\emph{negation-complete} implies \emph{non-monotonic}'');
		third, negation completeness and intuitionism are incompatible properties; and 
		fourth, negation completeness implies algorithmic decidability.

\subsubsection{Communality}
In LDiiP, LiiP, LiP, and LIiP, 
	our so-far \emph{ad hoc} modal notation $[M]\phi$ becomes 
				$\decides{M}{}{a}{}\phi\,,$ 
				$\proves{M}{}{a}{\community}\phi\,,$
				$\pproves{M}{}{a}{\community}\phi\,,$ and
				$\Iproves{M}{}{\CM}{}\phi\,,$ respectively, where
					$a$ and $\community$ is an additional parameter for   
						a peer-reviewing agent $a$ 
							(such as the as-an-agent-imagined communication medium $\CM$) and 
						a finite agent-community $\community$ of peers, respectively.
The intended meaning of these modalities is 
	``$M$ can classically and disjunctively but only non-monotonically prove to $a$ that $\phi$ [is true],''
	``$M$ can classically and non-monotonically prove to $a$ that $\phi$ and
		this fact is common belief in $\community\cup\set{a}$,''
	``$M$ can classically and monotonically prove to $a$ that $\phi$ and 
		this fact is common knowledge in $\community\cup\set{a}$,'' and
	``$M$ can intuitionistically (and thus disjunctively and monotonically) prove to $\CM$ that $\phi$,'' respectively.
(Recall from
	\cite{Epistemic_Logic,MultiAgents}, 
		that knowledge implies belief.)
In all these logics,
	the proof potential is such that 
		if my peer reviewer knew my proof 
		then she would know that its proof goal is true.
Notice that 
	what is accepted as a potential proof $M$ 
		may depend on a community $\community\cup\set{a}$ of peers if and only if
			the proof is non-disjunctive.
This is the (non-)communality of $M$ mentioned in Table~\ref{table:picture}.

\subsection{Contribution}\label{section:Contribution}
Our contribution in this paper is five-fold:
\begin{enumerate}
	\item We produce the intuitionistic Logic of Intuitionistic interactive Proofs (LIiP)  
		(\cf Theorem~\ref{theorem:Adequacy}) from its classical counter-part LiP. 
		LIiP internalises necessarily \emph{communication-medium-centred} 
			(or \emph{communication-adversary-centred})\footnote{
										In communication security, 
											the communication medium is usually 
												assumed adversarial (as an agent) and 
												called Eve (the eavesdropper).} intuitionistic proof theories, 
			enjoying the disjunction property.
		%
		%
		As notable \emph{\textbf{syntactic} novelties} in intuitionistic modal logic, 
			LIiP provides: 
			\begin{enumerate}
				\item a non-primitive 
					\begin{enumerate}
						\item possibility modality that 
				is \emph{doubly macro-definable} within the language of LIiP: 
					in terms of double negation and
					\begin{enumerate}
						\item communication-medium knowledge (\cf Page~\pageref{page:PossibilityModality}), 
						\item its corresponding primitive necessity modality 
					(though \emph{not} in the classical modal terms $\Diamond\phi\lequiv\neg\Box\neg\phi$, \cf Theorem~\ref{theorem:SomeUsefulDeducibleLogicalLaws}.58);
					\end{enumerate}
						\item necessitation rule that 
								is derivable from 
									the primitive modal monotonicity axiom schema $\phi\limp\Box\phi$ and 
									the primitive \emph{modus ponens} deduction rule 
										(\cf Theorem~\ref{theorem:SomeUsefulDeducibleLogicalLaws}.0);
					\end{enumerate}
				\item the two insights that in interactive settings, 
					\begin{enumerate}
						\item the intuitionistic truths are those of the communication medium 
								(\cf Remark~\ref{remark:IntuitionisticTruths}),
						\item intuitionistic proofs induce perfect and thus disjunctive knowledge
								in the communication medium, and only in that 
									(\cf Remark~\ref{remark:PerfectKnowledge}).
					\end{enumerate}
			\end{enumerate}
			That is, we put forward 
				\emph{LIiP as a modal-logical characterisation of 
						the concept of message-passing communication medium.}
	\item We provide a standard but also oracle-computational and set-theoretically constructive Kripke-semantics for LIiP  
		(\cf Section~\ref{section:Semantically}):
		\begin{itemize}
	\item Like in \cite{LiiP,KramerICLA2013} and \cite{LDiiP,KramerIMLA2013},
		we endow the proof modality with a standard Kripke-semantics \cite{ModalLogicSemanticPerspective}, but first define its 
			accessibility relation $\access{M}{\CM}{}$  
				constructively in terms of 
					elementary set-theoretic constructions,\footnote{in loose analogy with 
							the set-theoretically constructive rather than 
							the purely axiomatic definition of 
								numbers \cite{TheNumberSystems} or 
								ordered pairs (\eg the now standard definition by Kuratowski, 
									and other well-known definitions \cite{NotesOnSetTheory})} 
							namely as $\pAccess{M}{\CM}{}$ (\cf Section~\ref{section:Concretely}),
				and then match it to an abstract semantic interface in standard form (which 
					abstractly stipulates the characteristic properties of the accessibility relation
						\cite{ModalProofTheory}).
	We will say that $\pAccess{M}{\CM}{}$ \emph{exemplifies} 
		(or \emph{realises}) $\access{M}{\CM}{}$ (\cf Section~\ref{section:Abstractly}).
	(A simple example of a set-theoretically constructive but non-intuitionistic definition of a modal accessibility is
		the well-known definition of epistemic accessibility as 
			state indistinguishability defined in terms of  
				equality of state-projection functions \cite{Epistemic_Logic}.)
	
	\item 	Our Kripke-semantics is oracle-computational in the sense that 
		the individual proof knowledge (say $M$) can be thought of as being provided by 
			a computation oracle (\cf Definition~\ref{definition:SemanticIngredients}), which thus acts as a 
				hypothetical provider and imaginary epistemic source of our interactive proofs.
	\end{itemize}
	As notable \emph{\textbf{semantic} novelties} in intuitionistic modal logic, 
		LIiP is: 
		\begin{enumerate}
			\item \emph{doubly constructive:} 
			LIiP is an intuitionistic (and thus constructive) logic and additionally  
			offers a set-theoretically constructive Kripke-semantics in the form of  
				the concrete accessibility relation $\pAccess{M}{\CM}{}\,;$
			\item \emph{parametrically mono-relational:}
			We may freely choose between 
				the abstract accessibility $\access{M}{\CM}{}$ and 
				its concrete exemplification (or realisation) $\pAccess{M}{\CM}{}$ as 
					the accessibility relation in  
						LIiP's Kripke-semantics, but 
							$\pAccess{\CM}{\CM}{}$ has also the property of 
								being a partial order with  
									$\CM$ designating the communication medium.
		Hence LIiP's essentially mono-relational models subsume 
			seminal bi-relational models of intuitionistic modal logics \cite[Page~59]{PhDThesisSimpson}  
				by absorbing the partial order $\pAccess{\CM}{\CM}{}$ for the Kripke-semantics of LIiP's intuitionistic connectives 
					as a mere instance of the accessibility relation $\pAccess{M}{\CM}{}$ for the Kripke-semantics of LIiP's proof modality\label{page:PartialOrder},
						thanks to being parametric (and thus generic).
		\end{enumerate}
	\item We prove a \emph{modal-depth result} applying both to 
			LIiP's necessity as well as 
			its corresponding possibility modality (\cf Corollary~\ref{corollary:ModalDepth}).
	\item We prove that 
		our CM-centred notion of proof is also 
			a \emph{disjunctive explicit} refinement of standard KD45-belief, and yields also such 
			a refinement of standard S5-knowledge and S4-provability 
				(\cf Corollary~\ref{corollary:DEB} and \ref{corollary:DEP}).
	\item We prove 
		the \emph{finite-model property} (\cf Theorem~\ref{theorem:FiniteModelProperty}) and therefrom 
		the \emph{algorithmic decidability} of LIiP (\cf  Corollary~\ref{corollary:AlgorithmicDecidability}).
\end{enumerate}
As a side-effect of our work on LIiP, 
			we offer an internalised, 
				three-line two-axiom proof of 
					the Disjunction Property of 
						Intuitionistic Logic (IL) 
							originally proved by G\"odel.
			The two axioms 
				are modal internalisations of two fundamental properties of IL, namely 
					the truthfulness of its proofs and
					Kripke's Monotonicity Lemma for his semantics of IL.
			Surprisingly, 
				they jointly 
					trivialise the corresponding box modality `$\Box$' (though \emph{not} the one of LIiP) in a technical sense, and thus also,
						in a non-technical sense, 
							G\"odel's (non-trivial) proof of IL's Disjunction Property.
			The truthfulness of intuitionistic proofs corresponds to 
				the well-known modal T-law $\Box\phi\limp\phi$ and 
			Kripke's Monotonicity Lemma to
				the law $\phi\limp\Box\phi$
				(not to be confused with the mentioned monotonicity of proof terms,
					\cf Page~\pageref{page:Monotonicity} and Remark~\ref{remark:Monotonicity}).
			Jointly, 
				they imply $\Box(\phi\lor\varphi)\limp(\Box\phi\lor\Box\varphi)$ in any modal logic:
				\begin{enumerate}
					\item $\vdash\Box(\phi\lor\varphi)\limp(\phi\lor\varphi)$\hfill T
					\item $\vdash(\phi\lor\varphi)\limp(\Box\phi\lor\Box\varphi)$\hfill 
							$\vdash\phi\limp\Box\phi$, 
							$\vdash\varphi\limp\Box\varphi$, IL
					\item $\vdash\Box(\phi\lor\varphi)\limp(\Box\phi\lor\Box\varphi)$\hfill 1, 2, IL.
				\end{enumerate}

\subsection{Roadmap}
In the next section, 
	we introduce our Logic of Intuitionistic interactive Proofs (LIiP) axiomatically by means of 
		a compact closure operator that induces the Hilbert-style proof system that we seek. 
We then prove 
	a substantial number of useful, deducible structural and logical laws 
		(\cf Theorem~\ref{theorem:SomeUsefulDeducibleStructuralLaws} and 
						\ref{theorem:SomeUsefulDeducibleLogicalLaws}) within the obtained system, and 
	therefrom important corollaries 
		(Corollary~\ref{corollary:Combinator}--\ref{corollary:DEP}), some of which
			count as our aforementioned contributions in this paper.
%
%
Next, we 
	introduce the concretely constructed semantics 
	as well as the standard abstract semantic interface for LIiP (\cf Section~\ref{section:Semantically}), and
	prove the axiomatic adequacy of the proof system with respect to this interface 
		(\cf Theorem~\ref{theorem:Adequacy}).
In the construction of the semantics, we again make use of a closure operator, but 
	this time on sets of proof terms. 
Finally,
	we prove 
		the finite-model property 
			(\cf Theorem~\ref{theorem:FiniteModelProperty}) and  
		the algorithmic decidability 
			(\cf Corollary~\ref{corollary:AlgorithmicDecidability}) of LDiiP.

\section{LIiP}
\subsection{Syntactically}
The Logic of Intuitionistic interactive Proofs (LIiP) provides
	a modal \emph{formula language} over a generic message \emph{term language}.
The formula language offers
		the propositional constructors, 
		a relational symbol 
			`$\knows{}{}$' 
			for constructing atomic propositions about 
				so-called individual knowledge (\eg $\knows{a}{M}$), and
		a modal constructor `$\Iproves{}{}{}{}$' for propositions about proofs
			(\eg $\Iproves{M}{\phi}{\CM}{}$).
The message language 
	offers a term constructor for message \emph{pairing} and 
	can accommodate arbitrary other term constructors, \eg for cryptography (\cf \cite{LiP}).
The single term constructor of pairing is sufficient for 
	internalising \emph{modus ponens} into the message language 
		(\cf Theorem~\ref{theorem:SomeUsefulDeducibleLogicalLaws}.1) and 
			thus for internalising the single deduction rule of intuitionistic logic into it.
\emph{Modus ponens} can be regarded as a minimal requirement for a system to count as a proof system.
And so in the context of LIiP, 
	all other term constructors can be regarded as application-specific, and 
		LIiP-theories with such constructors as applied LIiP-theories.
These however are not the subject matter of our present paper about basic (or pure) LIiP.
(Message signing has no essential role in LIiP as opposed to LiP.)
In brief, LIiP is a minimal modular extension of IL with an 
interactively generalised
	additional operator 
	(the proof modality) and 
	proof-term language (only one, binary built-in constructor; \emph{agents as proof-checkers}).
Alternatively,
	LIiP can be viewed as a refinement (due to its parameterised modality) and extension (due to additional laws) of 
			Fischer Servi's \cite{IK:FischerServi} or, equivalently,  
			Plotkin and Stirling's \cite{IK:PlotkinStirling} 
		basic intuitionistic modal logic IK, 
			promoted in \cite{PhDThesisSimpson} as ``the true intuitionistic analogue of K.''
See \cite{BasicConstructiveModality} for a recent 
	discussion of this purported truth and 
	alternative contribution in the form of the so-called basic constructive modal logic CK, which
		can be embedded into IK \cite{CKembeddedIntoIK}.
Note that the formula language of LIiP is identical to the one of LiP \cite{LiP}
	modulo 
	the term language and 
	the proof-modality notation.
The term language of LIiP is strictly included in the term language of LiP but may be arbitrarily extended.
The proof-modality notation in LIiP is `$\Iproves{}{}{}{}$' whereas it is `$\pproves{}{}{}{}$' in LiP.

In the sequel, grey-shading indicates essential differences to LiP. 
\begin{definition}[The language of LIiP]\label{definition:LIiPLanguage}
	Let
	\begin{itemize}
		\item $\agents$ designate a finite set of 
			\emph{agent names} $a$, $b$, $c$, \etc \colorbox[gray]{0.75}{such that $\CM\in\agents$,}
				where $\CM$ designates the communication medium 
					(admissible also in LiP);
		\item $\messages \ni M \bnfeq a \bnfor B \bnfor \pair{M}{M}$
			designate our language of 
				\emph{message terms} $M$ \emph{over} $\agents$ with 
				(transmittable) agent names $a\in\agents$, 
				application-specific data $B$ (left blank here), and 
				message-term pairs $\pair{M}{M}\,;$
				
			(Messages must be grammatically well-formed, which
				yields an induction principle.
				So agent names $a$ are logical term constants, 
				the meta-variable $B$ just signals the possibility of an extended term language $\messages$, and 
				$\pair{\cdot}{\cdot}$ is a binary functional symbol. 
				For other term constructors, see \cite{LiP}.)
		\item $\mathcal{P}$ designate a denumerable set of \emph{propositional variables} $P$ constrained such that
			for all $a\in\agents$ and $M\in\messages$, 
				$(\knows{a}{M})\in\mathcal{P}$ 
					(for ``$a$ knows $M$'')  
					is a distinguished variable, \ie 
						an \emph{atomic proposition}, 
							(for \emph{individual} knowledge);
			(So, for $a\in\agents$, $\knows{a}{\cdot}$ 
				is a unary relational symbol.)
		\item $\pFormulas\ni\phi \bnfeq P \bnfor 
				\phi\land\phi \bnfor 
				\phi\lor\phi \bnfor 
				\neg\phi \bnfor 
				\phi\limp\phi \bnfor 
				\colorbox[gray]{0.75}{$\Iproves{M}{\phi}{\CM}{}$}$ 
				designate our language of \emph{logical formulas} $\phi$, where
					the modal-necessity formula $\Iproves{M}{\phi}{\CM}{}$ reads as  
					``$M$ can intuitionistically prove that $\phi$ (is true) to $\CM$.'' 
	\end{itemize}
\end{definition}
\noindent	
Note the following macro-definitions: 
	$\true \defeq \knows{\CM}{\CM}$, 
	$\false \defeq \neg \true$,  
	$\phi \lequiv \phi' \defeq (\phi \limp \phi') \land (\phi' \limp \phi)$, 
	\colorbox[gray]{0.75}{$\blacksquare\phi\defeq\Iproves{\CM}{\phi}{\CM}{}$}
		(see also Theorem~\ref{theorem:SomeUsefulDeducibleLogicalLaws}.8), 
	$\Diamond\phi\defeq\neg\neg\phi$, and 
	\colorbox[gray]{0.75}{$\Iproofdiamond{M}{\phi}{\CM}{}\defeq\Diamond(\knows{\CM}{M}\land\phi)$}
		(\emph{double negation as modal possibility} rather than necessity, unlike in  
		\cite{IntuitionisticDoubleNegationAsNecessity}\label{page:PossibilityModality}; 
			see also Theorem~\ref{theorem:SomeUsefulDeducibleLogicalLaws}.58).
Recall that
	whereas 
		conjunction and disjunction connectives as well as 
		necessitation and possibility modalities 
			are dually inter-definable in classical modal logic by means of negation, 
				they are not necessarily so in intuitionistic modal logic 
					\cite[Requirement~5]{PhDThesisSimpson}.
However,
	as our above macro-definition of $\Iproofdiamond{M}{\phi}{\CM}{}$ in terms of a double negation and individual knowledge foreshadows, 
		$\Iproofdiamond{M}{\phi}{\CM}{}$ fortunately is not necessary as a primitive modality in the language of LIiP.
(\cite{LIPArtemov}\label{page:LIP:4} remain silent as to the dual of their intuitionistic-proof modality.)

Then, LIiP has the following axiom and deduction-rule schemas. 
\begin{definition}[The axioms and deduction rules of LIiP]\label{definition:AxiomsRules}
Let
	\begin{itemize}
		\item $\Gamma_{0}$ designate an adequate set of axioms for \colorbox[gray]{0.75}{intuitionistic} propositional logic
		\item $\Gamma_{1} \defeq \Gamma_{0} \cup \{$
			\begin{itemize}
				\item $\knows{a}{a}$\quad(knowledge of one's own name string)
				\item $(\knows{a}{M}\land\knows{a}{M'})\lequiv\knows{a}{\pair{M}{M'}}$\quad([un]pairing)
				\item $\Iproves{M}{\knows{\CM}{M}}{\CM}{}$\quad(self-knowledge)
				\item $(\Iproves{M}{(\phi\limp\phi')}{\CM}{})\limp
				((\Iproves{M}{\phi}{\CM}{})\limp\Iproves{M}{\phi'}{\CM}{})$%
				\quad(K)
				\item $(\Iproves{M}{\phi}{\CM}{})\limp(\knows{\CM}{M}\limp\phi)$\quad(epistemic T, ET)	
				\item \colorbox[gray]{0.75}{$
							(\Iproves{M}{\phi}{\CM}{})\limp\Iproofdiamond{M}{\phi}{\CM}{}
						\text{\quad(intuitionistic D, ID)}$}%
				\item \colorbox[gray]{0.75}{$\phi\limp\Iproves{M}{\phi}{\CM}{}$
						\quad(modal monotonicity, MM)} 
				\}
			\end{itemize}
			designate a set of \emph{axiom schemas.} 
	\end{itemize}
	Then, $\colorbox[gray]{0.75}{$\LIiP\defeq\Clo{}{}(\emptyset)$}\defeq\bigcup_{n\in\mathbb{N}}\Clo{}{n}(\emptyset)$, where 
		for all $\Gamma\subseteq\pFormulas:$
		\begin{eqnarray*}
					\Clo{}{0}(\Gamma) &\defeq& \Gamma_{1}\cup\Gamma\\
					\Clo{}{n+1}(\Gamma) &\defeq& 
						\begin{array}[t]{@{}l@{}}
							\Clo{}{n}(\Gamma)\ \cup\\
							\setst{\phi'}{\set{\phi,\phi\limp\phi'}\subseteq\Clo{}{n}(\Gamma)}\cup
								\quad\text{(\emph{modus ponens}, MP)}\\
							\setst{(\Iproves{M'}{\phi}{\CM}{})\limp\Iproves{M}{\phi}{\CM}{}}{(\knows{\CM}{M}\limp\knows{\CM}{M'})\in\Clo{}{n}(\Gamma)}\\
							\quad\text{(epistemic antitonicity, EA)}.\label{page:EpistemicAntitonicity}
						\end{array}
				\end{eqnarray*}
		We call $\LIiP$ a \emph{base theory,} and
		$\Clo{}{}(\Gamma)$ an \emph{LIiP-theory} for any $\Gamma\subseteq\pFormulas$.
\end{definition}
\noindent
Notice the 
	logical order of LIiP, which like LiP's is, 
		due to propositions about (proofs of) propositions, \emph{higher-order propositional}.
\paragraph{Inherited laws} 
From LiP \cite{LiP},  
	we recall the discussion of 
		the (un)pairing axiom,  
		Kripke's law (K), 
		the laws of 
			epistemic T (ET) and 
			epistemic antitonicity (EA):
We assume the existence of a pairing mechanism modelling finite sets.
Such a mechanism is required by the important application of
	communication (not only cryptographic) protocols \cite[Chapter~3]{SecurityEngineering}, in which
		concatenation of high-level data packets is associative, commutative, and idempotent.
The key to the validity of K is that 
	we understand interactive proofs as sufficient evidence for
		intended resource-unbounded proof-checking agents (who are though still unable to guess).
Clearly for such agents,
	if 
		$M$ is sufficient evidence for $\phi\limp\phi'$ and $\phi$
	then so is $M$ for $\phi'$.
Then, the significance of ET 
	(which as opposed to the standard T-law is conditioned on individual knowledge) to interactivity is that 
	in truly distributed multi-agent systems, 
		not all proofs are known by all agents, \ie 
			agents are not omniscient with respect to messages. 
Otherwise, why communicate with each other?
So there being a proof does not imply knowledge of that proof. 
When an agent $a$ does not know the proof and 
the agent cannot generate the proof \emph{ex nihilo} herself by guessing it, 
only communication from a peer, who thus acts as an oracle, can entail the knowledge of the proof with $a$. 
%
%
Finally,
	note that 
		the law of self-knowledge is a theorem but not an axiom in LiP, and
	observe that EA is a rule of \emph{logical modularity} that 
			allows the modular generation of structural modal laws from 
				implication term laws  
					(\cf Theorem~\ref{theorem:SomeUsefulDeducibleStructuralLaws}).

\paragraph{New laws} We continue to discuss the new laws of LIiP, which are all axiom schemas: 
In contrast to LiP \cite{LiP}, 
	LIiP must have an \emph{intuitionistic} rather than a classical propositional axiom base $\Gamma_{0}$ as
		already explained at the end of Section~\ref{section:IL}.
Next, ID says that intuitionistic necessity (``box'') implies intuitionistic possibility (``diamond'').
As opposed to its classical-modal-logic equivalent D, 
	ID cannot be alternatively stated in the shape of 
		$\vdash\Box\phi\limp\neg\Box\neg\phi$.
Then, the law of modal monotonicity MM reflects 
	the semantic fact mentioned in Section~\ref{section:Contribution} that
		LIiP's Kripke-model absorbs 
			the partial order for 
				the Kripke-semantics of LIiP's intuitionistic connectives as 
					a mere instance of the accessibility relation for 
				the Kripke-semantics of LIiP's proof modality,
		thanks to being parametric (and thus generic, \cf Section~\ref{section:Semantically}).
Thus we adopt formulas of the shape $\phi\limp\Box\phi$ as axioms MM in 
	our intuitionistic modal logical system LIiP, 
		like Do\v{s}en in his intuitionistic modal logical system $\mathit{Hdn\Box}$ 
			\cite{IntuitionisticDoubleNegationAsNecessity}, where 
				he adopts formulas of that form as axioms \emph{dn2}.
To our knowledge,
	$\mathit{Hdn\Box}$ and LIiP are the only intuitionistic modal logics with such axioms.
Finally,
	we could add 
		admissible but not derivable rules and 
		their corresponding internalising axioms to 
			LIiP in the style of \cite{LIPArtemov}.\label{page:LIP:5} 
(Recall from \cite{sep-logic-intuitionistic} that 
	the admissible rules of a theory are the rules under which the theory is closed.
	Hence the set of primitive and derivable rules is a subset of the set of admissible rules, 
		and [but not] vice versa in classical [intuitionistic] logic.
	[IL is \emph{structurally incomplete}.]
	See \cite{jerabek:independent-bases} for suitable \emph{bases} of admissible rules.)
However,
	since the addition of admissible but not derivable rules to a theory 
		does not change the theory, such 
			an addition can be considered as unnecessary, 
				at least in our base theory.

In the sequel, ``:iff'' abbreviates ``by definition, if and only if''.
\begin{proposition}[Hilbert-style proof system]\label{proposition:Hilbert}
	Let 
		\begin{itemize}
			\item $\Phi\LIiPded\phi$ \text{:iff} if $\Phi\subseteq\LIiP$ then $\phi\in\LIiP$ 
			\item $\phi\LIiPdedBis\phi'$ \text{:iff} $\set{\phi}\LIiPded\phi'$ and $\set{\phi'}\LIiPded\phi$
			\item $\LIiPded\phi$ \text{:iff} $\emptyset\LIiPded\phi.$
		\end{itemize}
		In other words, ${\LIiPded}\subseteq\powerset{\pFormulas}\times\pFormulas$ is a \emph{system of closure conditions} in the sense of 
				\cite[Definition~3.7.4]{PracticalFoundationsOfMathematics}.
		For example:
			\begin{enumerate}
				\item for all axioms $\phi\in\Gamma_{1}$, $\LIiPded\phi$
				\item for \emph{modus ponens}, $\set{\phi,\phi\limp\phi'}\LIiPded\phi'$
				\item for epistemic antitonicity,
				
					$\set{\knows{\CM}{M}\limp\knows{\CM}{M'}}\LIiPded
						(\Iproves{M'}{\phi}{\CM}{})\limp\Iproves{M}{\phi}{\CM}{}$.
			\end{enumerate}
		(In the space-saving, horizontal Hilbert-notation ``$\Phi\LIiPded\phi$'', 
			$\Phi$ is not a set of hypotheses but a set of premises, \cf  
				\emph{modus ponens} and epistemic antitonicity.)
	
	Then, $\LIiPded$ can be viewed as being defined by 
		a $\Clo{}{}$-induced Hilbert-style proof system.
	In fact 
			${\Clo{}{}}:\powerset{\pFormulas}\rightarrow\powerset{\pFormulas}$ is a \emph{standard consequence operator,} \ie
				a \emph{substitution-invariant compact closure operator.}
\end{proposition}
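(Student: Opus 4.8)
The plan is to establish, in turn, the four ingredients of ``standard consequence operator''---extensiveness, monotonicity, idempotence, compactness---together with substitution-invariance, and then to read off the description of $\LIiPded$ as the system of closure conditions generated by the three displayed schemes. The starting observation, immediate from the shape of the recursion, is that the stages form a cumulative chain: $\Clo{}{n}(\Gamma)\subseteq\Clo{}{n+1}(\Gamma)$ for all $n\in\mathbb{N}$ and all $\Gamma\subseteq\pFormulas$. Extensiveness, $\Gamma\subseteq\Clo{}{}(\Gamma)$, then holds because $\Gamma\subseteq\Gamma_{1}\cup\Gamma=\Clo{}{0}(\Gamma)$; and monotonicity follows by a trivial induction on $n$ showing $\Clo{}{n}(\Gamma)\subseteq\Clo{}{n}(\Gamma')$ whenever $\Gamma\subseteq\Gamma'$, since the \emph{modus ponens} clause and the epistemic-antitonicity clause are each monotone in their argument $\Clo{}{n}(-)$.

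For compactness, suppose $\phi\in\Clo{}{}(\Gamma)$, say $\phi\in\Clo{}{n}(\Gamma)$; I would show by induction on $n$ that $\phi\in\Clo{}{n}(\Gamma_{0})$ for some finite $\Gamma_{0}\subseteq\Gamma$. At stage $0$ take $\Gamma_{0}=\emptyset$ if $\phi\in\Gamma_{1}$ and $\Gamma_{0}=\set{\phi}$ if $\phi\in\Gamma$. At stage $n+1$: if $\phi$ already lies in $\Clo{}{n}(\Gamma)$ apply the induction hypothesis; if $\phi$ is the \emph{modus ponens} conclusion of premises $\psi$ and $\psi\limp\phi$ in $\Clo{}{n}(\Gamma)$, apply the induction hypothesis to each premise and take the union of the two finite witnesses (using monotonicity); if $\phi$ arises by epistemic antitonicity, licensed by some $\knows{\CM}{M}\limp\knows{\CM}{M'}\in\Clo{}{n}(\Gamma)$, apply the induction hypothesis to that single side-condition formula. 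Finiteness of the premise sets of the two clauses is precisely what makes this go through.

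For idempotence it suffices to see that $\Clo{}{}(\Gamma)$ is itself closed under the defining operations, hence equals the least subset of $\pFormulas$ containing $\Gamma$ and closed under them. Indeed $\Gamma_{1}\subseteq\Clo{}{}(\Gamma)$; and if $\set{\phi,\phi\limp\phi'}\subseteq\Clo{}{}(\Gamma)$ then by cumulativity both members lie in a common stage $\Clo{}{n}(\Gamma)$, so $\phi'\in\Clo{}{n+1}(\Gamma)\subseteq\Clo{}{}(\Gamma)$, and the epistemic-antitonicity clause is handled identically. Thus $\Clo{}{n}(\Clo{}{}(\Gamma))=\Clo{}{}(\Gamma)$ for every $n$ by induction, whence $\Clo{}{}(\Clo{}{}(\Gamma))=\Clo{}{}(\Gamma)$. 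Equivalently, $\Clo{}{}$ is exactly the closure operator generated by the system of closure conditions consisting of $(\emptyset,\phi)$ for each $\phi\in\Gamma_{1}$, of $(\set{\phi,\phi\limp\phi'},\phi')$, and of $(\set{\knows{\CM}{M}\limp\knows{\CM}{M'}},(\Iproves{M'}{\phi}{\CM}{})\limp\Iproves{M}{\phi}{\CM}{})$, in the sense of \cite[Definition~3.7.4]{PracticalFoundationsOfMathematics}; this yields idempotence ``for free'' and simultaneously establishes the asserted reading of $\LIiPded$, whose displayed examples~1--3 are just these three schemes.

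Finally, substitution-invariance, $\sigma(\Clo{}{}(\Gamma))\subseteq\Clo{}{}(\sigma(\Gamma))$, follows by one more induction on the stage, once one notes: $\Gamma_{0}$ and the interactive axioms of $\Gamma_{1}$ are schemas that a substitution---acting on formulas only, leaving the message terms and the name $\CM$ untouched---maps into themselves, so $\sigma(\Gamma_{1})\subseteq\Gamma_{1}$; \emph{modus ponens} commutes with $\sigma$, since $\sigma(\phi\limp\phi')=\sigma(\phi)\limp\sigma(\phi')$; and the epistemic-antitonicity clause is preserved because $\sigma$ fixes the term structure while, by the induction hypothesis, carrying its side-condition formula from $\Clo{}{n}(\Gamma)$ into $\Clo{}{n}(\sigma(\Gamma))$. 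I expect the only genuinely delicate point---the one requiring care---to be the notion of substitution itself: the distinguished atoms $\knows{a}{M}$ are at once propositional variables and applications of the relational symbol $\knows{}{}$, so for $\Gamma_{1}$ to be substitution-closed (and for $\LIiP$ to remain consistent) the admissible substitutions must be understood to commute with $\knows{}{}$ and with the pairing constructor $\pair{\cdot}{\cdot}$, \ie to replace arbitrary formulas only for the non-distinguished propositional variables. With that convention fixed, the three observations above apply and all remaining verifications are routine inductions on the closure stage, entirely parallel to the corresponding development for LiP in \cite{LiP}.
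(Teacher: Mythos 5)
Your proof is correct and follows essentially the same route as the paper, whose own proof simply defers to LiP, asserts that the closure-operator properties ``can be verified by inspection of the inductive definition of $\Clo{}{}$,'' and notes that substitution invariance ``follows from our definitional use of axiom schemas''---you have merely carried out that inspection in full (cumulativity, extensiveness, monotonicity, compactness via finite premise sets, idempotence via closure of the limit, and the reading of $\LIiPded$ as the induced system of closure conditions). Your closing caveat---that admissible substitutions must fix the distinguished atoms $\knows{a}{M}$, \ie replace formulas only for non-distinguished propositional variables, since otherwise $\Gamma_{1}$ (\eg the axiom $\knows{a}{a}$) would not be substitution-closed---is exactly the right reading of the convention the paper leaves implicit.
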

\begin{proof}
	Like in \cite{LiP}.
	That a Hilbert-style proof system can be viewed as induced by 
		 a compact closure operator is well-known (\eg see \cite{WhatIsALogicalSystem});
	that $\Clo{}{}$ is indeed such an operator can be verified by 
		inspection of the inductive definition of $\Clo{}{}$; and
	substitution invariance follows from our definitional use of axiom \emph{schemas}.\footnote{%
		Alternatively to axiom schemas,
		we could have used 
			axioms together with an
			additional substitution-rule set
				$\setst{\sigma[\phi]}{\phi\in\Clo{}{n}(\Gamma)}$
		in the definiens of $\Clo{}{n+1}(\Gamma)$.}
\end{proof}

\begin{corollary}[Normality]\label{corollary:Normality}
		LIiP is a normal modal logic.
\end{corollary}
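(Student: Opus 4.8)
The plan is to verify directly, against Definition~\ref{definition:AxiomsRules}, the defining closure conditions of a normal modal logic, reading the parametrised family $\Iproves{M}{\cdot}{\CM}{}$ (one modality for each fixed $M\in\messages$) as ``the box''. Recall that a modal logic counts as \emph{normal} when it (i)~contains every theorem of its propositional base (here intuitionistic propositional logic), (ii)~contains every instance of the distribution schema $\Box(\phi\limp\phi')\limp(\Box\phi\limp\Box\phi')$, and (iii)~is closed under \emph{modus ponens}, under uniform substitution, and under necessitation ($\LIiPded\phi$ implies $\LIiPded\Box\phi$).

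First I would dispatch the easy conditions. Since $\Gamma_{0}\subseteq\Gamma_{1}=\Clo{}{0}(\emptyset)$ and $\Clo{}{}$ is closed under \emph{modus ponens} by the very form of its definiens, every IL-theorem lies in $\LIiP$, giving~(i); closure under \emph{modus ponens} is likewise immediate from the definiens of $\Clo{}{n+1}$, and closure under uniform substitution is exactly the substitution-invariance of $\Clo{}{}$ recorded in Proposition~\ref{proposition:Hilbert}. For~(ii), the schema~(K) is literally one of the axiom schemas constituting $\Gamma_{1}$, so all of its instances already belong to $\Clo{}{0}(\emptyset)\subseteq\LIiP$. This leaves necessitation, which in LIiP is \emph{derived} rather than primitive: if $\LIiPded\phi$, \ie $\phi\in\LIiP$, then by the axiom schema~(MM) also $(\phi\limp\Iproves{M}{\phi}{\CM}{})\in\LIiP$ for every $M$, whence $\Iproves{M}{\phi}{\CM}{}\in\LIiP$ by one application of \emph{modus ponens} (this is the content of Theorem~\ref{theorem:SomeUsefulDeducibleLogicalLaws}.0). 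Assembling these checks gives the corollary.

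I do not anticipate a genuine obstacle here; the only point worth stating carefully is conceptual rather than computational — that ``normality'' concerns the behaviour of the necessity modality $\Iproves{}{}{\CM}{}$ (namely~(K) together with necessitation) and is orthogonal both to the intuitionistic, rather than classical, propositional base and to the fact that the dual possibility $\Iproofdiamond{}{}{\CM}{}$ is only macro-defined, not primitive. It may also be worth remarking that it is precisely this normality that subsequently licenses the routine schematic derivations — monotonicity of the box in its formula argument, distribution over conjunction, and so on — used freely in the sequel.
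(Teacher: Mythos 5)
Your proposal is correct and matches the paper's own argument, which likewise cites the primitive K-axiom and \emph{modus ponens}, the substitution invariance from Proposition~\ref{proposition:Hilbert}, and necessitation as derived from MM plus \emph{modus ponens} (Theorem~\ref{theorem:SomeUsefulDeducibleLogicalLaws}.0). You merely spell out these checks in more detail than the paper does.
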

\begin{proof}
	Jointly by  
		Kripke's law and  
		\emph{modus ponens} (by definition), 
		necessitation (\cf proof of Theorem~\ref{theorem:SomeUsefulDeducibleLogicalLaws}.0), and 
		substitution invariance (\cf Proposition~\ref{proposition:Hilbert}).
\end{proof}
\noindent

We are now going to present some useful  
	deducible \emph{structural} laws of LIiP, 
		including the deducible non-structural rule of epistemic bitonicity, 
			used in the deduction of some of them.
Here, 
	``structural'' means 
	``deducible exclusively from term axioms.''
The laws are enumerated in a (total) order that respects (but cannot reflect) their respective proof prerequisites.
The laws are also deducible in LiP, in the same order and 
	without non-intuitionistic machinery \cite{LiP}.
\begin{theorem}[Some useful deducible structural laws]\label{theorem:SomeUsefulDeducibleStructuralLaws}\ 
	\begin{enumerate}
		\item $\LIiPded\knows{a}{\pair{M}{M'}}\limp\knows{a}{M}$
		\\(left projection, 
			1-way $\Kcomb$-combinator property)
		\item $\LIiPded\knows{a}{\pair{M}{M'}}\limp\knows{a}{M'}$\quad(right projection)
		\item $\LIiPded\knows{a}{\pair{M}{M}}\lequiv\knows{a}{M}$\quad(pairing idempotency)
		\item $\LIiPded\knows{a}{\pair{M}{M'}}\lequiv\knows{a}{\pair{M'}{M}}$\quad(pairing commutativity)
		\item $\LIiPded(\knows{a}{M}\limp\knows{a}{M'})\lequiv(\knows{a}{\pair{M}{M'}}\lequiv\knows{a}{M})$
		\\(neutral pair elements)
		\item $\LIiPded\knows{a}{\pair{M}{a}}\lequiv\knows{a}{M}$\quad(self-neutral pair element)
		\item $\LIiPded\knows{a}{\pair{M}{\pair{M'}{M''}}}\lequiv\knows{a}{\pair{\pair{M}{M'}}{M''}}$\quad(pairing associativity)
		\item $\set{\knows{\CM}{M}\lequiv\knows{\CM}{M'}}\LIiPded
					(\Iproves{M}{\phi}{\CM}{})\lequiv\Iproves{M'}{\phi}{\CM}{}$\quad(epistemic bitonicity)
		\item $\LIiPded(\Iproves{M}{\phi}{\CM}{})\limp\Iproves{\pair{M'}{M}}{\phi}{\CM}{}$\quad(proof extension, left)
		\item $\LIiPded(\Iproves{M}{\phi}{\CM}{})\limp\Iproves{\pair{M}{M'}}{\phi}{\CM}{}$\quad(proof extension, right)
		\item $\LIiPded((\Iproves{M}{\phi}{\CM}{})\lor\Iproves{M'}{\phi}{\CM}{})\limp\Iproves{\pair{M}{M'}}{\phi}{\CM}{}$\quad(proof extension)
		\item $\LIiPded(\Iproves{\pair{M}{M}}{\phi}{\CM}{})\lequiv\Iproves{M}{\phi}{\CM}{}$\quad(proof idempotency)
		\item $\LIiPded(\Iproves{\pair{M}{M'}}{\phi}{\CM}{})\lequiv\Iproves{\pair{M'}{M}}{\phi}{\CM}{}$\quad(proof commutativity)
	\item $\set{\knows{\CM}{M}\limp\knows{\CM}{M'}}\LIiPded(\Iproves{\pair{M}{M'}}{\phi}{\CM}{})\lequiv\Iproves{M}{\phi}{\CM}{}$
	\\(neutral proof elements)
		\item $\LIiPded(\Iproves{\pair{M}{\CM}}{\phi}{\CM}{})\lequiv\Iproves{M}{\phi}{\CM}{}$\quad(self-neutral proof element)
		\item $\LIiPded(\Iproves{\pair{M}{\pair{M'}{M''}}}{\phi}{\CM}{})\lequiv\Iproves{\pair{\pair{M}{M'}}{M''}}{\phi}{\CM}{}$
		\\(proof associativity)
	\end{enumerate}
\end{theorem}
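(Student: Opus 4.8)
The plan is to derive each of the sixteen laws from the term axioms of $\Gamma_1$ together with the two rules epistemic antitonicity (EA) and epistemic bitonicity (item~8), exactly as one would for the corresponding laws of LiP. The first seven laws concern only individual knowledge $\knows{a}{\cdot}$ and are pure consequences of the (un)pairing axiom $(\knows{a}{M}\land\knows{a}{M'})\lequiv\knows{a}{\pair{M}{M'}}$ and $\knows{a}{a}$, combined with intuitionistic propositional reasoning: left and right projection are the two halves of the unpairing direction weakened by $\land$-elimination; idempotency, commutativity, and associativity follow by finitely many applications of (un)pairing on both sides; the ``neutral pair elements'' law~5 is a biconditional chase using projection plus the observation that $\knows{a}{M}\limp\knows{a}{M'}$ lets one collapse $\knows{a}{\pair{M}{M'}}$ to $\knows{a}{M}$; and law~6 is the instance of law~5 obtained from $\knows{a}{a}$ (so $\knows{a}{M}\limp\knows{a}{a}$ trivially holds). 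None of this needs the modality, and all of it is intuitionistically valid since it is just $\land$-manipulation.

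For the modal laws~8--16, the engine is EA and its symmetric strengthening, epistemic bitonicity. First I would prove item~8 (epistemic bitonicity): from the hypothesis $\knows{\CM}{M}\lequiv\knows{\CM}{M'}$ one extracts both implications $\knows{\CM}{M}\limp\knows{\CM}{M'}$ and $\knows{\CM}{M'}\limp\knows{\CM}{M}$, feeds each into EA, and conjoins the two resulting implications $(\Iproves{M'}{\phi}{\CM}{})\limp\Iproves{M}{\phi}{\CM}{}$ and $(\Iproves{M}{\phi}{\CM}{})\limp\Iproves{M'}{\phi}{\CM}{}$ to get the biconditional. Then every remaining law is obtained by instantiating EA or bitonicity at a suitable pair of message terms and discharging the knowledge-side hypothesis using the structural laws~1--7 already proved: proof extension left/right (laws~9,10) come from EA applied with $M' \defeq \pair{M''}{M}$ or $\pair{M}{M''}$ together with the projection laws giving $\knows{\CM}{\pair{M}{M'}}\limp\knows{\CM}{M}$; proof extension (law~11) is then just $\lor$-elimination over laws~9 and~10; proof idempotency, commutativity, associativity (laws~12,13,16) are bitonicity instances whose knowledge-side biconditional is exactly laws~3,4,7 with $a\defeq\CM$; ``neutral proof elements'' (law~14) is bitonicity fed the $\CM$-instance of law~5; and the self-neutral proof element (law~15) is law~14 with $M'\defeq\CM$, whose hypothesis $\knows{\CM}{\CM}\limp\cdots$ is free because $\knows{\CM}{\CM}=\true$ is an axiom instance, or alternatively the $\CM$-instance of law~6 lifted through bitonicity.

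Throughout I would be careful that only intuitionistically valid propositional steps are used — conjunction introduction/elimination, disjunction elimination, transitivity of implication, and biconditional splitting — all of which are available since $\Gamma_0$ is an adequate intuitionistic base; in particular no step invokes LEM, double-negation elimination, or the classical interdefinability of connectives, so the claim that ``the laws are also deducible in LiP without non-intuitionistic machinery'' is respected. The main obstacle, such as it is, is bookkeeping rather than conceptual: one must present the laws in an order in which each law's proof only cites earlier ones (the statement already fixes such an order), and one must correctly match the knowledge-side hypotheses of EA/bitonicity to the exact structural law~1--7 that discharges them, since a mismatched pairing or a swapped projection would leave a residual hypothesis. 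I do not expect any genuine difficulty beyond this, as the whole theorem is the LiP development transported verbatim into the intuitionistic base.
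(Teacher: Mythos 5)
Your proposal is correct and follows essentially the same route as the paper, which simply defers to the corresponding development in LiP with the remark that no non-intuitionistic machinery is needed: laws 1--7 from the (un)pairing axiom and $\knows{a}{a}$ by intuitionistic $\land$-reasoning, law 8 by applying EA to both halves of the knowledge biconditional, and laws 9--16 by instantiating EA or bitonicity and discharging the knowledge-side premise with the appropriate earlier law. Your explicit check that every propositional step (projection, $\lor$-elimination, biconditional splitting) is intuitionistically valid is exactly the point the paper's one-line proof is asserting.
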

\begin{proof} 
	Like in \cite{LiP}---no non-intuitionistic machinery is required.
\end{proof}
\noindent
For a discussion of these LIiP laws, 
		consider our discussion of their analogs in LiP \cite{LiP} and the following remark.
\begin{remark}[Monotonicity---Proof \& Truth]\label{remark:Monotonicity}
The law of proof extension captures 
	the monotonicity of the \emph{proof terms} in LIiP mentioned in Table~\ref{table:picture}, 
		as does its analog in LiP.
In contrast,
	the law of modal monotonicity (\cf Definition~\ref{definition:AxiomsRules}), 
		which does not hold in LiP,	
			captures the monotonicity of the \emph{local truths} in LIiP.
Recall from Section~\ref{section:Introduction} that in an intuitionistic (modal) universe (such as LIiP's), 
	all, \ie positive or negative (whence the notation `$\Iproves{}{}{\CM}{}$\negthinspace'), (local) truths are monotonic.
Whereas in a classical (modal) universe (such as LiP's), 
	not all (local) truths need be monotonic.
If a proof term is monotonic then its proof goal is.
If a proof goal is monotonic then its proof must be.
In LIiP, 
	all proof goals are monotonic, 
		thanks to LIiP being intuitionistic, which is what 
			forces them to be so.
However in LiP, 
	not all proof goals need be monotonic, 
		because of LiP being classical as well as modal, which is what 
			frees them from being so.
\end{remark}
\begin{corollary}[$\Scomb$-combinator property]\label{corollary:Combinator}\  
	\begin{enumerate}
		\item $\LIiPded\knows{a}{\pair{\pair{M}{M'}}{M''}}\lequiv\knows{a}{\pair{M}{\pair{M''}{\pair{M'}{M''}}}}$
		\item $\LIiPded(\Iproves{\pair{\pair{M}{M'}}{M''}}{\phi}{\CM}{})\lequiv
						\Iproves{\pair{M}{\pair{M''}{\pair{M'}{M''}}}}{\phi}{\CM}{}$
	\end{enumerate}
\end{corollary}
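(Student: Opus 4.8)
The plan is to derive the $\Scomb$-combinator property directly from the structural pairing laws already established in Theorem~\ref{theorem:SomeUsefulDeducibleStructuralLaws}, treating parts~1 and~2 in parallel since each is merely the $\knows{}{}$-level and $\Iproves{}{}{}{}$-level shadow of the same term identity. Concretely, the target equivalence $\knows{a}{\pair{\pair{M}{M'}}{M''}}\lequiv\knows{a}{\pair{M}{\pair{M''}{\pair{M'}{M''}}}}$ says that the two parenthesisations of the multiset-like pair content are interchangeable; the natural route is to show that, modulo the laws of (un)pairing, both sides are provably equivalent to the ``flattened'' knowledge of the constituent atoms, or rather to a common canonical form such as $\knows{a}{\pair{\pair{M}{M'}}{M''}}$ itself. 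First I would use pairing associativity (law~7) and pairing commutativity (law~4) to rewrite $\pair{M}{\pair{M''}{\pair{M'}{M''}}}$ into $\pair{\pair{\pair{M}{M''}}{M'}}{M''}$ or a similarly regrouped term, and then invoke pairing idempotency (law~3) together with the neutral-pair-elements law (law~5) to absorb the duplicated occurrence of $M''$, collapsing the right-hand side to $\knows{a}{\pair{\pair{M}{M'}}{M''}}$.

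More carefully, the chain I expect to run is: apply associativity to move all parentheses to the left, obtaining $\knows{a}{\pair{\pair{M}{\pair{M''}{M'}}}{M''}}$; apply commutativity inside to get $\knows{a}{\pair{\pair{M}{\pair{M'}{M''}}}{M''}}$; apply associativity again to get $\knows{a}{\pair{\pair{\pair{M}{M'}}{M''}}{M''}}$; and finally apply law~5 (with the premise $\knows{a}{\pair{M}{M'}}\limp\knows{a}{\pair{\pair{M}{M'}}{M''}}$, which is just left projection, law~1, read backwards through the $\lequiv$) to see that adjoining a second copy of $M''$ is neutral, yielding $\knows{a}{\pair{\pair{M}{M'}}{M''}}$. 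Each of these rewrites is a substitution instance of an already-proven equivalence, so transitivity and congruence of $\lequiv$ under $\knows{a}{\cdot}$ (both available in the intuitionistic propositional base $\Gamma_{0}$, since $\lequiv$ is just a conjunction of implications and $\knows{a}{\cdot}$ appears only positively here) close part~1.

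For part~2, the same term-rewriting sequence applies verbatim, but now at the level of the proof modality $\Iproves{}{}{\CM}{}$ rather than $\knows{a}{\cdot}$: I would replace each appeal to a $\knows{}{}$-pairing law by its $\Iproves{}{}{\CM}{}$-analog from Theorem~\ref{theorem:SomeUsefulDeducibleStructuralLaws} — proof associativity (law~17), proof commutativity (law~13), proof idempotency (law~12), and the neutral-proof-elements law (law~14) — and chain them via transitivity of $\lequiv$. The role of congruence is played automatically because these modal laws are themselves stated as provable biconditionals between the relevant $\Iproves{}{}{\CM}{}$-formulas, so no separate congruence principle for the modality is needed; I simply compose the biconditionals. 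Alternatively, and perhaps more slickly, part~2 follows from part~1 by epistemic bitonicity (law~8): part~1 gives $\knows{\CM}{\pair{\pair{M}{M'}}{M''}}\lequiv\knows{\CM}{\pair{M}{\pair{M''}{\pair{M'}{M''}}}}$ as a special case $a:=\CM$, and feeding this biconditional as the premise of law~8 immediately yields the $\Iproves{}{}{\CM}{}$-equivalence.

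The main obstacle is bookkeeping rather than conceptual depth: one must pick a rewrite strategy — left-associate everything, then push commutations, then fire idempotency/neutrality — that actually terminates at a common form without looping, and one must be careful that every intermediate step is a genuine substitution instance of one of the sixteen laws (in particular that the neutral-element laws, whose unconditional instances~6 and~15 require the second component to be an agent name, are not misapplied; here the duplicated component is $M''$, so one needs the conditional forms~5 and~14 with the discharged premise supplied by the corresponding projection law). Since the excerpt explicitly says these laws are deducible in LiP ``in the same order and without non-intuitionistic machinery,'' I expect the intended proof to be exactly this: a purely structural calculation whose only non-trivial content is the choice of normal form, so I would simply record it as ``By Theorem~\ref{theorem:SomeUsefulDeducibleStructuralLaws} (pairing/proof associativity, commutativity, idempotency, and neutral elements), together with law~8 for part~2.''
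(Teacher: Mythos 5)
Your overall strategy is the right one and matches the paper's intent: the paper's own proof is just ``Like in \cite{LiP}'', i.e.\ a purely structural calculation from the (un)pairing axiom and the laws of Theorem~\ref{theorem:SomeUsefulDeducibleStructuralLaws}, with part~2 obtainable either by the parallel proof-level laws or, as you note, in one step from part~1 (instantiated at $a\defeq\CM$) via epistemic bitonicity. Two details in your write-up need repair, though. First, the premise you feed into the neutral-pair-elements law (law~5) to absorb the duplicated $M''$ is wrong as stated: with the instantiation $M\defeq\pair{\pair{M}{M'}}{M''}$, $M'\defeq M''$, law~5 requires $\knows{a}{\pair{\pair{M}{M'}}{M''}}\limp\knows{a}{M''}$, which is an instance of \emph{right} projection (law~2); the formula you wrote, $\knows{a}{\pair{M}{M'}}\limp\knows{a}{\pair{\pair{M}{M'}}{M''}}$, is not a theorem and is not ``left projection read backwards''. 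Second, the congruence you invoke to rewrite \emph{inside} a pair (e.g.\ replacing $\pair{M''}{M'}$ by $\pair{M'}{M''}$ under an outer $\pair{M}{\cdot}$) does not come from $\Gamma_{0}$ alone, since $\knows{a}{N}$ is atomic and propositional logic cannot see into it; it comes from the (un)pairing axiom, which lets you pass to $\knows{a}{M}\land\knows{a}{N}$, substitute equivalents there, and repackage. Indeed the cleanest version of your argument skips the rewrite choreography entirely: by iterated (un)pairing both sides of part~1 are LIiP-equivalent to $\knows{a}{M}\land\knows{a}{M'}\land\knows{a}{M''}$ (using idempotency of $\land$ in IL to absorb the second $\knows{a}{M''}$), which settles part~1, and then law~8 gives part~2 exactly as you propose.
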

\begin{proof}
	Like in \cite{LiP}---again, no non-intuitionistic machinery is required.	
\end{proof}

We are going to present also some useful, deducible \emph{logical} laws of LIiP.
Here, 
	``logical'' means 
	``not structural'' in the previously defined sense.
Also these laws are enumerated in an order that respects their respective proof prerequisites.
%
%
Grey-shading indicates special interest for  
	intuitionistic modal logic in general and for LIiP as opposed to (the classical) LiP in particular.
Three important themes therein are: 
	first, intuitionistic negation (single `$\neg$' and double `$\neg\neg$') and 
	second, individual knowledge ($\knows{\CM}{M}$), and their import for 
		the relation between 
			$\Iproves{M}{\phi}{\CM}{}$ and its dual 
			$\Iproofdiamond{M}{\phi}{\CM}{}$, which 
				is normally not one of identity nor dual definability in intuitionistic modal logic; and 
	third, the internalised disjunction property (IDP), which
		does not hold in LiP.
(\cite{LIPArtemov}\label{page:LIP:7} remain silent about the deducibility of an IDP in their logic, which, given that they internalise standard IL, is intriguing.)
Theorem~\ref{theorem:SomeUsefulDeducibleLogicalLaws} has four important corollaries, among which there is 
	a modal-depth result, 
	the relation of LIiP to 
		Fischer Servi's \cite{IK:FischerServi} and  
		Plotkin and Stirling's \cite{IK:PlotkinStirling} seminal work on intuitionistic modal logic, and 
	the relation of LIiP to standard 
		doxastic \cite{MultiAgents} and 
		epistemic logic \cite{MultiAgents,Epistemic_Logic,EpistemicLogicFiveQuestions}.
The number of intermediate results required to obtain the corollaries, 
	reflected in the length of Theorem~\ref{theorem:SomeUsefulDeducibleLogicalLaws}, 
		may be indicative of the exponential blow-up in proof length of 
			intuitionistic over classical logic \cite{LowerProofComplexityIL}.
The non-intuitionistically inclined reader may want to skip them except 
	Theorem~\ref{theorem:SomeUsefulDeducibleLogicalLaws}.58.
Whereas the intuitionistically inclined reader may want to prove them herself, 
	in order and as milestones for proving their corollaries.
\begin{theorem}[Some useful deducible logical laws]\label{theorem:SomeUsefulDeducibleLogicalLaws}\  
		\begin{enumerate}\setcounter{enumi}{-1}
		\item $\{\phi\}\LIiPded\Iproves{M}{\phi}{\CM}{}$\quad(necessitation, N)
		\item $\LIiPded(\Iproves{M}{(\phi\limp\phi')}{\CM}{})\limp
				((\Iproves{M'}{\phi}{\CM}{})\limp\Iproves{\pair{M}{M'}}{\phi'}{\CM}{})$\\
				\quad(generalised Kripke-law, GK)
		\item $\set{\phi\limp\phi'}\LIiPded(\Iproves{M}{\phi}{\CM}{})\limp\Iproves{M}{\phi'}{\CM}{}$\quad(regularity, R)		
		\item $\set{\phi\lequiv\phi'}\LIiPded(\Iproves{M}{\phi}{\CM}{})\lequiv\Iproves{M}{\phi'}{\CM}{}$\quad(R \emph{bis})
		\item $\set{\knows{\CM}{M}\limp\knows{\CM}{M'},\phi\limp\phi'}\LIiPded(\Iproves{M'}{\phi}{\CM}{})\limp\Iproves{M}{\phi'}{\CM}{}$\\
		\quad(epistemic regularity, ER)		
		\item $\set{\knows{\CM}{M}\lequiv\knows{\CM}{M'},\phi\lequiv\phi'}\LIiPded(\Iproves{M'}{\phi}{\CM}{})\lequiv\Iproves{M}{\phi'}{\CM}{}$\quad(ER \emph{bis})
		\item $\LIiPded\Iproves{M}{\true}{\CM}{}$\quad(anything can prove tautological truth)		
		\item $\LIiPded\Iproofdiamond{M}{\true}{\CM}{}$\quad(anything can disprove tautological falsehood)	
		\item \colorbox[gray]{0.75}{$\LIiPded\blacksquare\phi\lequiv\phi$\quad(TMM)}
		\item $\phi\LIiPdedBis\blacksquare\phi$\quad(TMM \emph{bis})
		\item \colorbox[gray]{0.75}{$\LIiPded\knows{\CM}{M}\limp((\Iproves{M}{\phi}{\CM}{})\lequiv\phi)$\quad(ET \emph{bis})}
		\item \colorbox[gray]{0.75}{$\LIiPded\neg\neg(\knows{\CM}{M})$\quad(CM message communicability, CMMC)}
		\item \colorbox[gray]{0.75}{$\LIiPded\neg\neg((\Iproves{M}{\phi}{\CM}{})\lequiv\phi)$\quad(possible TMM, PTMM)}
		\item $\LIiPded\neg\phi\limp\neg(\Iproofdiamond{M}{\phi}{\CM}{})$\\
				(falsehood implies falsehood non-disprovability, FIFND)
		\item \colorbox[gray]{0.75}{\begin{tabular}[t]{@{}l@{}}
				$\LIiPded\neg(\Iproofdiamond{M}{\phi}{\CM}{})\lequiv\neg(\Iproves{M}{\phi}{\CM}{})$\\ 
				(falsehood non-disprovability equals truth unprovability, FNDETU)\end{tabular}}
		\item $\LIiPded\neg\phi\limp\neg(\Iproves{M}{\phi}{\CM}{})$\\
				(falsehood implies truth unprovability, FITU)
		\item $\LIiPded\neg\neg\phi\limp\neg(\Iproofdiamond{M}{\neg\phi}{\CM}{})$\quad(FIFND \emph{bis})
		\item \colorbox[gray]{0.75}{$\LIiPded\neg(\Iproofdiamond{M}{\neg\phi}{\CM}{})\lequiv\neg(\Iproves{M}{\neg\phi}{\CM}{})$\quad(FNDETU \emph{bis})}
		\item $\LIiPded\neg\neg\phi\limp\neg(\Iproves{M}{\neg\phi}{\CM}{})$\quad(FITU \emph{bis})
		\item \colorbox[gray]{0.75}{\begin{tabular}[t]{@{}l@{}l@{}}
				$\LIiPded$&$(\phi\lor(\Iproves{M}{\phi}{\CM}{})\lor\Iproofdiamond{M}{\phi}{\CM}{})\limp$\\
					&$(\neg\neg\phi\land\neg(\Iproofdiamond{M}{\neg\phi}{\CM}{})\land\neg(\Iproves{M}{\neg\phi}{\CM}{}))$\\ \multicolumn{2}{@{}l}{(extended weak double-negation law, EWDN)}
				\end{tabular}}		
		\item \colorbox[gray]{0.75}{\begin{tabular}[t]{@{}l@{}}
				$\LIiPded(\neg\neg\phi\limp\phi')\limp
						((\Iproofdiamond{M}{\phi}{\CM}{})\limp\Iproves{M}{\phi'}{\CM}{})$\\
				(conditional functionality, CF)
				\end{tabular}}
		\item \colorbox[gray]{0.75}{\begin{tabular}[t]{@{}l@{}}
					$\LIiPded(\neg\neg\phi\limp\phi)\limp
						((\Iproofdiamond{M}{\phi}{\CM}{})\lequiv\Iproves{M}{\phi}{\CM}{})$\\
				(local classicality implies modal equivalence, LCIME)
				\end{tabular}}
		\item \colorbox[gray]{0.75}{\begin{tabular}[t]{@{}l@{}}
				$\LIiPded(\knows{\CM}{M}\land\Iproves{M}{\phi}{\CM}{})\limp(\neg\neg\phi\limp\phi)$\\ 
				(proof knowledge implies local classicality, PKILC)
				\end{tabular}}
		\item \colorbox[gray]{0.75}{\begin{tabular}[t]{@{}l@{}}
				$\LIiPded(\knows{\CM}{M}\land\Iproves{M}{\phi}{\CM}{})\limp((\Iproofdiamond{M}{\phi}{\CM}{})\lequiv\Iproves{M}{\phi}{\CM}{})$\\
				(proof knowledge implies modal equivalence, PKIME)
				\end{tabular}}
		\item $\LIiPded\neg(\Iproves{M}{\false}{\CM}{})$\quad(nothing can prove tautological falsehood)
		\item $\LIiPded\neg(\Iproofdiamond{M}{\false}{\CM}{})$\quad(nothing can disprove tautological truth)
		\item $\LIiPded\phi\limp\Iproofdiamond{M}{\phi}{\CM}{}$\quad(weak MM, WMM)
		\item $\set{\knows{\CM}{M}\limp\phi}\LIiPdedBis\Iproves{M}{\phi}{\CM}{}$\quad(epistemic N, EN)
		\item $\set{\knows{\CM}{M}\limp\knows{\CM}{M'}}\LIiPdedBis\Iproves{M}{\knows{\CM}{M'}}{\CM}{}$\quad(EN \emph{bis})
		\item \colorbox[gray]{0.75}{$\LIiPded\Iproves{M}{((\Iproves{M}{\phi}{\CM}{})\lequiv\phi)}{\CM}{}$\quad(ET \emph{bis} self-proof)}
		\item $\LIiPded(\Iproves{M}{\phi}{\CM}{})\limp\Iproves{M}{(\Iproves{M}{\phi}{\CM}{})}{\CM}{}$\quad(4)
		\item \colorbox[gray]{0.75}{$\LIiPded\neg(\Iproves{M}{\phi}{\CM}{})\limp\Iproves{M}{\neg(\Iproves{M}{\phi}{\CM}{})}{\CM}{}$\quad(5)}
		\item $\LIiPded((\Iproves{M}{\phi}{\CM}{})\land\Iproves{M'}{\phi'}{\CM}{})\limp
						\Iproves{\pair{M}{M'}}{(\phi\land\phi')}{\CM}{}$\\
						\quad(proof conjunctions)
		\item $\LIiPded((\Iproves{M}{\phi}{\CM}{})\land\Iproves{M}{\phi'}{\CM}{})\lequiv
						\Iproves{M}{(\phi\land\phi')}{\CM}{}$
						\quad(proof conjunctions \emph{bis})						
		\item $\LIiPded((\Iproves{M}{\phi}{\CM}{})\lor\Iproves{M'}{\phi'}{\CM}{})\limp
						\Iproves{\pair{M}{M'}}{(\phi\lor\phi')}{\CM}{}$\quad(proof disjunctions)
		\item $\LIiPded((\Iproves{M}{\phi}{\CM}{})\lor\Iproves{M}{\phi'}{\CM}{})\limp
						\Iproves{M}{(\phi\lor\phi')}{\CM}{}$\quad(proof disjunctions \emph{bis})
		\item $\LIiPded(\Iproves{M}{(\Iproves{M}{\phi}{\CM}{})}{\CM}{})\lequiv
				\Iproves{M}{\phi}{\CM}{}$\quad(modal idempotency, MI)
		\item $\LIiPded(\Iproofdiamond{M}{(\Iproofdiamond{M}{\phi}{\CM}{})}{\CM}{})\lequiv
				\Iproofdiamond{M}{\phi}{\CM}{}$\quad(MI \emph{bis})
		\item \colorbox[gray]{0.75}{
				\begin{tabular}[t]{@{}l@{}}
					$\LIiPded(\phi\lor(\Iproves{M}{\phi}{\CM}{})\lor\Iproofdiamond{M}{\phi}{\CM}{})\limp\Iproofdiamond{M}{(\Iproves{M}{\phi}{\CM}{})}{\CM}{}$\\
					(nested MM, NMM)
				\end{tabular}}
		\item \colorbox[gray]{0.75}{$\LIiPded(\Iproves{M}{(\Iproofdiamond{M}{\phi}{\CM}{})}{\CM}{})\limp\Iproofdiamond{M}{(\Iproves{M}{\phi}{\CM}{})}{\CM}{}$\quad(modal swap, MS)}
		\item \colorbox[gray]{0.75}{
				\begin{tabular}[t]{@{}l@{}}
					$\LIiPded\knows{\CM}{M}\limp((\Iproves{M}{(\phi\lor\phi')}{\CM}{})\limp((\Iproves{M}{\phi}{\CM}{})\lor\Iproves{M}{\phi'}{\CM}{}))$\\
					(epistemic internalised disjunction property, EIDP)
				\end{tabular}}
		\item \colorbox[gray]{0.75}{
				\begin{tabular}[t]{@{}l@{}}
					$\LIiPded\Iproves{M}{((\Iproves{M}{(\phi\lor\phi')}{\CM}{})\limp((\Iproves{M}{\phi}{\CM}{})\lor\Iproves{M}{\phi'}{\CM}{}))}{\CM}{}$\\
					(IDP self-proof)
				\end{tabular}}
		\item $\LIiPded(\Iproves{M}{\phi}{\CM}{})\lequiv\Iproves{M}{(\knows{\CM}{M}\land\phi)}{\CM}{}$
				\quad(epistemic idempotency, EI)
		\item $\LIiPded(\Iproves{M}{\phi}{\CM}{})\lequiv\Iproves{M}{(\knows{\CM}{M}\land\Iproves{M}{\phi}{\CM}{})}{\CM}{}$
				\quad(EI \emph{bis})
		\item \colorbox[gray]{0.75}{\begin{tabular}[t]{@{}l@{}}
				$\LIiPded(\Iproofdiamond{M}{(\phi\lor\phi')}{\CM}{})\lequiv
				((\Iproofdiamond{M}{\phi}{\CM}{})\lor\Iproofdiamond{M}{\phi'}{\CM}{})$\\ 
				(Plotkin-Stirling 4, PS4)
				\end{tabular}} 
		\item \colorbox[gray]{0.75}{\begin{tabular}[t]{@{}l@{}}
				$\LIiPded(\Iproves{M}{(\phi\limp\phi')}{\CM}{})\limp
				((\Iproofdiamond{M}{\phi}{\CM}{})\limp\Iproofdiamond{M}{\phi'}{\CM}{})$\\ 
					(Plotkin-Stirling 2, PS2)
					\end{tabular}}
		\item \colorbox[gray]{0.75}{\begin{tabular}[t]{@{}l@{}}
				$\LIiPded((\Iproofdiamond{M}{\phi}{\CM}{})\limp\Iproves{M}{\phi'}{\CM}{})\limp
				\Iproves{M}{(\phi\limp\phi')}{\CM}{}$\\ 
					(Plotkin-Stirling 5, PS5)
				\end{tabular}}
		\item \colorbox[gray]{0.75}{$\LIiPded(\neg\neg\phi\limp\phi')\limp
				\Iproves{M}{(\phi\limp\phi')}{\CM}{}$\quad(CFPS5)}
		\item \colorbox[gray]{0.75}{\begin{tabular}[t]{@{}l@{}l@{}}
				$\LIiPded$&$((\neg\neg\phi\limp\phi')\land(\neg\neg\phi'\limp\phi))\limp$\\  
						&$((\Iproves{M}{(\phi\lor\phi')}{\CM}{})\limp((\Iproves{M}{\phi}{\CM}{})\lor\Iproves{M}{\phi'}{\CM}{}))$\\
				\multicolumn{2}{@{}l}{(conditional IDP, CIDP)}
				\end{tabular}}
		\item \colorbox[gray]{0.75}{$\LIiPded(\phi\lor(\Iproves{M}{\phi}{\CM}{})\lor\Iproofdiamond{M}{\phi}{\CM}{})\limp\Iproves{M}{(\Iproofdiamond{M}{\phi}{\CM}{})}{\CM}{}$\quad(NMM \emph{bis})}
		\item \colorbox[gray]{0.75}{$\LIiPded(\Iproofdiamond{M}{(\Iproves{M}{\phi}{\CM}{})}{\CM}{})\limp
					\Iproves{M}{(\Iproofdiamond{M}{\phi}{\CM}{})}{\CM}{}$\quad(MS \emph{bis})}
		\item \colorbox[gray]{0.75}{\begin{tabular}[t]{@{}l@{}}
				$\LIiPded(\Iproves{M}{(\Iproofdiamond{M}{\phi}{\CM}{})}{\CM}{})\lequiv
							\Iproofdiamond{M}{(\Iproves{M}{\phi}{\CM}{})}{\CM}{}$\\
								(modal commutativity, MC)
								\end{tabular}}
		\item \colorbox[gray]{0.75}{
					\begin{tabular}[t]{@{}l}
						$\LIiPded(\Iproofdiamond{M}{(\Iproves{M}{\phi}{\CM}{})}{\CM}{})\lequiv\Iproofdiamond{M}{\phi}{\CM}{}$\\
							(mixed modal idempotency, MMI)
							\end{tabular}}
		\item \colorbox[gray]{0.75}{
				$\LIiPded(\Iproves{M}{(\Iproofdiamond{M}{\phi}{\CM}{})}{\CM}{})\lequiv
							\Iproofdiamond{M}{\phi}{\CM}{}$\quad(MMI \emph{bis})}
		\item $\LIiPded(\Iproves{M}{\neg\neg\phi}{\CM}{})\limp
							\Iproofdiamond{M}{\phi}{\CM}{}$\quad(double-negation absorption, DNA)			
		\item $\LIiPded(\Iproves{M}{\neg\neg\phi}{\CM}{})\limp
							\neg\neg(\Iproves{M}{\phi}{\CM}{})$\quad(double-negation extrusion, DNE)
		\item \colorbox[gray]{0.75}{$\LIiPded\neg(\Iproofdiamond{M}{\phi}{\CM}{})\limp
							\Iproofdiamond{M}{\neg\phi}{\CM}{}$\quad(weak negation completeness, WNC)}					\item \colorbox[gray]{0.75}{$\LIiPded\neg(\Iproves{M}{\phi}{\CM}{})\limp
							\Iproofdiamond{M}{\neg\phi}{\CM}{}$\quad(WNC \emph{bis})}
		\item \colorbox[gray]{0.75}{$\boxed{\LIiPded(\Iproofdiamond{M}{\phi}{\CM}{})\lequiv
										\neg\neg(\Iproves{M}{\phi}{\CM}{})}$\quad(modal double negation, MDN)}
		\item \colorbox[gray]{0.75}{$\LIiPded((\Iproves{M}{\phi}{\CM}{})\land
							\Iproofdiamond{M}{(\phi\limp\phi')}{\CM}{})\limp
								\Iproofdiamond{M}{\phi}{\CM}{}$\quad(Wiv)}
		\item \colorbox[gray]{0.75}{$\LIiPded(\Iproofdiamond{M}{(\phi\land\neg\phi')}{\CM}{})\limp
								(\phi\land\neg\phi')$\quad(Wv)}
\end{enumerate}
\end{theorem}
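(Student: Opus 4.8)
The plan is to unfold the definition of the possibility modality and then split the consequent $\phi\land\neg\phi'$ into its two conjuncts, discharging the negative conjunct $\neg\phi'$ by a routine stability argument and the positive conjunct $\phi$ by the proof-knowledge machinery, culminating in PKILC. First I would unfold $\Iproofdiamond{M}{(\phi\land\neg\phi')}{\CM}{}$ to $\neg\neg(\knows{\CM}{M}\land\phi\land\neg\phi')$ and observe that, by the intuitionistic laws for conjunction and triple negation together with CM message communicability (CMMC, $\LIiPded\neg\neg(\knows{\CM}{M})$), the hypothesis already yields both $\neg\neg\phi$ and $\neg\phi'$. The second of these settles the negative conjunct of the goal immediately, since $(\knows{\CM}{M}\land\phi\land\neg\phi')\limp\neg\phi'$ and $\neg\neg\neg\phi'\lequiv\neg\phi'$; no modal reasoning is needed here.

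The substance of the argument is recovering the positive conjunct $\phi$, and the plan is to manufacture local classicality rather than to invoke the intuitionistically invalid bare step $\neg\neg\phi\limp\phi$. Using modal monotonicity (MM, $\phi\limp\Iproves{M}{\phi}{\CM}{}$) I would rewrite the inner formula so that the hypothesis reads $\neg\neg(\knows{\CM}{M}\land\Iproves{M}{\phi}{\CM}{})$; under the double negation we then have exactly the antecedent of PKILC, namely $(\knows{\CM}{M}\land\Iproves{M}{\phi}{\CM}{})\limp(\neg\neg\phi\limp\phi)$. The intent is to thread PKILC, the epistemic truthfulness law ET, and CMMC so that the local classicality $\neg\neg\phi\limp\phi$ thereby furnished is combined with the already-available $\neg\neg\phi$ to export $\phi$, the negative second conjunct being what keeps the whole goal inside the stable fragment on which the medium's perfect knowledge behaves classically.

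I expect the discharge of the outer double negation to be the main obstacle. PKILC supplies the local-classicality conditional only inside the scope of that double negation, whereas the conclusion requires $\phi$ outside it; and applying $\neg\neg$-monotonicity to PKILC merely reproduces $\neg\neg(\neg\neg\phi\limp\phi)$, which is a theorem and hence vacuous. The crux is therefore to show that the specific shape $\phi\land\neg\phi'$ — a conjunction whose second conjunct is a negation, and so refutation-stable — lets one legitimately commute the proof-knowledge through the double negation: concretely, to derive $(\knows{\CM}{M}\land\Iproves{M}{\phi}{\CM}{})\limp(\phi\land\neg\phi')$ by ET and then argue that the double-negated version of this implication, together with $\neg\phi'$, collapses onto $\phi\land\neg\phi'$ without ever passing through the forbidden $\neg\neg\phi\limp\phi$ in isolation. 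Making this last commutation precise, so that the stability contributed by $\neg\phi'$ genuinely carries the non-negated $\phi$ out of the double negation, is where the proof has to be argued most carefully, and I would expect the paper to package it using the MDN equivalence and the proof-conjunction laws of Theorem~\ref{theorem:SomeUsefulDeducibleLogicalLaws} to keep the bookkeeping short.
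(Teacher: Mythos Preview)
Your instinct that the step ``extract $\phi$ from under the double negation'' is the crux is exactly right, but the difficulty is not merely technical: the statement \emph{as printed}, with two distinct metavariables $\phi$ and $\phi'$, is not derivable in LIiP at all. Instantiate $\phi':=\false$, so that $\neg\phi'\lequiv\true$; then the claim collapses to $(\Iproofdiamond{M}{\phi}{\CM}{})\limp\phi$, i.e.\ $\neg\neg(\knows{\CM}{M}\land\phi)\limp\phi$. By CMMC and the intuitionistic law $\neg\neg(A\land B)\lequiv(\neg\neg A\land\neg\neg B)$ this is equivalent to $\neg\neg\phi\limp\phi$, which would make LIiP classical. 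So no amount of PKILC/MDN bookkeeping can close the gap you identified; your proposed ``commutation'' step is genuinely blocked.

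The paper's own proof in the appendix silently reads the formula with a \emph{single} metavariable, i.e.\ as $(\Iproofdiamond{M}{(\phi\land\neg\phi)}{\CM}{})\limp(\phi\land\neg\phi)$, and then the argument is almost trivial and uses none of the heavy machinery you reach for. One applies EWDN to get $\neg\neg(\phi\land\neg\phi)$ from the hypothesis; since $\neg(\phi\land\neg\phi)$ is an intuitionistic theorem, the antecedent $\neg\neg(\phi\land\neg\phi)$ is equivalent to $\false$, and \emph{ex falso} finishes. This is exactly Wijesekera's axiom $\Diamond\bot\limp\bot$ dressed up with $\phi\land\neg\phi$ in place of $\bot$, which also explains the label (Wv). In short: treat the prime on $\phi'$ in item~60 as a typographical slip, and the intended proof is a three-line IL calculation via EWDN rather than the PKILC/ET/MDN route you sketch.
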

\begin{proof}
	See Appendix~\ref{appendix:Proofs:LogicalLaws}.
\end{proof}
\noindent
The following remark flags a non-trivial insight, also explicated 
	in Section~\ref{section:Semantically}.
\begin{remark}[Intuitionistic truths]\label{remark:IntuitionisticTruths}
	Theorem~\ref{theorem:SomeUsefulDeducibleLogicalLaws}.8  
		means that 
			in interactive settings, 
				the intuitionistic truths are those of the communication medium.
\end{remark}
\noindent
The reader is invited to 
	compare Theorem~\ref{theorem:SomeUsefulDeducibleLogicalLaws}.8 to its global counterpart 
			Theorem~\ref{theorem:SomeUsefulDeducibleLogicalLaws}.27,
				whose analog also holds in LiP but was not stated there.
\begin{corollary}[Modal-depth result]\label{corollary:ModalDepth}\ 
	Let $\heartsuit_{1}\cdots\heartsuit_{n}\phi\in\pFormulas$ such that 
		for all $1\leq i\leq n$, 
			$\heartsuit_{i}\in\set{\text{`}\Iproves{M}{}{\CM}{}\negthickspace\text{'}\,,\text{`}\Iproofdiamond{M}{}{\CM}{}\negthickspace\text{'}\;}$.
	Then, if the prefix `\,$\heartsuit_{1}\cdots\heartsuit_{n}$\negthinspace' contains 
		\begin{enumerate}
			\item only occurrences of the `$\Iproves{M}{}{\CM}{}$\negthickspace'-modality 
					then $$\LIiPded(\heartsuit_{1}\cdots\heartsuit_{n}\phi)\lequiv
									\Iproves{M}{\phi}{\CM}{}\,;$$
			\item at least one occurrence of the `$\Iproofdiamond{M}{}{\CM}{}$\negthickspace'-modality 
					then $$\LIiPded(\heartsuit_{1}\cdots\heartsuit_{n}\phi)\lequiv
									\Iproofdiamond{M}{\phi}{\CM}{}\,.$$
		\end{enumerate}
\end{corollary}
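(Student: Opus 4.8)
The plan is to argue by induction on the length $n$ of the prefix, stripping the \emph{outermost} modality $\heartsuit_{1}$ and applying the induction hypothesis to the strictly shorter prefix $\heartsuit_{2}\cdots\heartsuit_{n}$. This reduces everything to collapsing the four possible single-step nestings of the modalities `$\Iproves{M}{}{\CM}{}$' and `$\Iproofdiamond{M}{}{\CM}{}$', and each of those collapses is already a numbered law: modal idempotency MI (Theorem~\ref{theorem:SomeUsefulDeducibleLogicalLaws}.37) and its companion MI \emph{bis} (Theorem~\ref{theorem:SomeUsefulDeducibleLogicalLaws}.38) for the pure cases, and the mixed modal idempotencies MMI (Theorem~\ref{theorem:SomeUsefulDeducibleLogicalLaws}.53) and MMI \emph{bis} (Theorem~\ref{theorem:SomeUsefulDeducibleLogicalLaws}.54) for the mixed ones. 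To push the induction hypothesis underneath $\heartsuit_{1}$ I use congruence: R \emph{bis} (Theorem~\ref{theorem:SomeUsefulDeducibleLogicalLaws}.3) when $\heartsuit_{1}$ is `$\Iproves{M}{}{\CM}{}$', and the analogous (unnumbered but routine) congruence for `$\Iproofdiamond{M}{}{\CM}{}$', which follows at once from the macro-definition $\Iproofdiamond{M}{\psi}{\CM}{}\defeq\neg\neg(\knows{\CM}{M}\land\psi)$ and the fact that intuitionistic propositional logic permits replacement of provably equivalent subformulas under `$\land$' and `$\neg\neg$'.

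For the base case $n=1$ there is nothing to do: both asserted equivalences are instances of $\LIiPded\chi\lequiv\chi$. For the step, write the prefix as $\heartsuit_{1}\heartsuit_{2}\cdots\heartsuit_{n+1}$ and put $\psi\defeq\Iproves{M}{\phi}{\CM}{}$ if $\heartsuit_{2}\cdots\heartsuit_{n+1}$ contains only `$\Iproves{M}{}{\CM}{}$'-modalities, and $\psi\defeq\Iproofdiamond{M}{\phi}{\CM}{}$ otherwise. By the induction hypothesis $\LIiPded(\heartsuit_{2}\cdots\heartsuit_{n+1}\phi)\lequiv\psi$, hence by the appropriate congruence $\LIiPded(\heartsuit_{1}\heartsuit_{2}\cdots\heartsuit_{n+1}\phi)\lequiv\heartsuit_{1}\psi$. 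Now distinguish the two choices of $\heartsuit_{1}$ against the two choices of $\psi$: if $\heartsuit_{1}$ is `$\Iproves{M}{}{\CM}{}$' and $\psi=\Iproves{M}{\phi}{\CM}{}$ then $\LIiPded\heartsuit_{1}\psi\lequiv\Iproves{M}{\phi}{\CM}{}$ by MI, and the whole prefix is purely `$\Iproves{M}{}{\CM}{}$' (case~1); if $\heartsuit_{1}$ is `$\Iproves{M}{}{\CM}{}$' and $\psi=\Iproofdiamond{M}{\phi}{\CM}{}$ then $\LIiPded\heartsuit_{1}\psi\lequiv\Iproofdiamond{M}{\phi}{\CM}{}$ by MMI \emph{bis}; if $\heartsuit_{1}$ is `$\Iproofdiamond{M}{}{\CM}{}$' and $\psi=\Iproves{M}{\phi}{\CM}{}$ then $\LIiPded\heartsuit_{1}\psi\lequiv\Iproofdiamond{M}{\phi}{\CM}{}$ by MMI; and if $\heartsuit_{1}$ is `$\Iproofdiamond{M}{}{\CM}{}$' and $\psi=\Iproofdiamond{M}{\phi}{\CM}{}$ then $\LIiPded\heartsuit_{1}\psi\lequiv\Iproofdiamond{M}{\phi}{\CM}{}$ by MI \emph{bis}. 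In the latter three cases the full prefix $\heartsuit_{1}\cdots\heartsuit_{n+1}$ contains at least one `$\Iproofdiamond{M}{}{\CM}{}$' (namely $\heartsuit_{1}$, or one inside $\heartsuit_{2}\cdots\heartsuit_{n+1}$), so case~2 applies and the asserted right-hand side is $\Iproofdiamond{M}{\phi}{\CM}{}$. Transitivity of `$\lequiv$' in intuitionistic logic then chains $\LIiPded(\heartsuit_{1}\cdots\heartsuit_{n+1}\phi)\lequiv\heartsuit_{1}\psi$ with $\LIiPded\heartsuit_{1}\psi\lequiv\chi$ to give the claim.

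I do not expect a genuine obstacle: all the substantive content has been discharged inside Theorem~\ref{theorem:SomeUsefulDeducibleLogicalLaws} when proving the four one-step collapse laws (and modal commutativity MC, Theorem~\ref{theorem:SomeUsefulDeducibleLogicalLaws}.52, which could serve as an alternative conduit for the mixed cases but is not needed). The only detail worth a sentence is the diamond-congruence used to move the induction hypothesis under `$\Iproofdiamond{M}{}{\CM}{}$', and that is a pure exercise in intuitionistic propositional reasoning about $\neg\neg(\knows{\CM}{M}\land(\cdot))$. Conceptually the corollary simply records that, over a fixed proof term $M$, LIiP has modal depth at most one for each of its two modalities---the S5/KD45-flavoured collapse that also underlies the refinement results Corollary~\ref{corollary:DEB} and Corollary~\ref{corollary:DEP}.
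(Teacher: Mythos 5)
Your proof is correct and is essentially the paper's own argument made explicit: the paper's proof of this corollary consists of the single instruction to apply MI for case~1 and MI, MI \emph{bis}, MMI, and MMI \emph{bis} for case~2, leaving the outermost-first induction and the box/diamond congruence steps implicit, exactly as you have filled them in (and your diamond-congruence via the macro $\neg\neg(\knows{\CM}{M}\land\cdot)$ is indeed routine IL). One small correction: the enumeration in Theorem~\ref{theorem:SomeUsefulDeducibleLogicalLaws} starts at $0$, so MI is item~36, MI \emph{bis} item~37, MMI item~52, MMI \emph{bis} item~53, and MC item~51 --- your numeric citations are each off by one, though the law names you invoke are the right ones.
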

\begin{proof}
	For 1, apply MI.
	For 2,
		apply MI, MI \emph{bis}, MMI, and MMI \emph{bis}.
\end{proof}

\begin{corollary}[Intuitionistic Modal Logic]\label{corollary:IK}
	LIiP is a refinement (due to its parameterised modality) and extension (due to additional laws) of 
		Fischer Servi's \cite{IK:FischerServi} and   
		Plotkin and Stirling's \cite{IK:PlotkinStirling} 
	intuitionistic modal logic IK.
	
	Similarly is 
		LIiP a refinement and extension of 	
			the propositional fragment of 
				Wijesekera's system of first-order constructive modal logic \cite[Section~1.5]{ConstructiveModalLogicI}.
\end{corollary}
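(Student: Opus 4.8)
The plan is to establish Corollary~\ref{corollary:IK} by exhibiting LIiP as containing all the axiom and rule schemas of IK (in the parametrised-modality reading), plus strictly more, and symmetrically for Wijesekera's propositional fragment. First I would recall a standard Hilbert-style axiomatisation of IK (\eg from \cite{PhDThesisSimpson}): an intuitionistic propositional base, the necessitation rule for $\Box$, the distribution axiom $\Box(\phi\limp\phi')\limp(\Box\phi\limp\Box\phi')$, and the five Fischer-Servi/Plotkin-Stirling interaction principles linking $\Box$ and $\Diamond$, namely $\Diamond(\phi\lor\phi')\lequiv(\Diamond\phi\lor\Diamond\phi')$, $\Diamond\false\limp\false$ (or $\neg\Diamond\false$), $\Box(\phi\limp\phi')\limp(\Diamond\phi\limp\Diamond\phi')$, $(\Diamond\phi\limp\Box\phi')\limp\Box(\phi\limp\phi')$, and the dual distribution of $\Diamond$ over $\limp$. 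The key observation is that under the reading $\Box \mapsto {}\Iproves{M}{}{\CM}{}$ and $\Diamond\mapsto{}\Iproofdiamond{M}{}{\CM}{}$ (\ie with the proof term $M$ and reviewer $\CM$ held fixed throughout), each of these becomes a schema that has already been proved deducible in the excerpt.

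The core of the argument is then a checklist matching IK's primitives against items of Theorem~\ref{theorem:SomeUsefulDeducibleLogicalLaws}: necessitation is item~0~(N); the $\Box$-distribution axiom (K) is an axiom of LIiP by Definition~\ref{definition:AxiomsRules}; the disjunction principle for $\Diamond$ is PS4 (item~49); $\neg\Iproofdiamond{M}{\false}{\CM}{}$ is item~26 (nothing can disprove tautological truth); the mixed distribution $\Box(\phi\limp\phi')\limp(\Diamond\phi\limp\Diamond\phi')$ is PS2 (item~50); and $(\Diamond\phi\limp\Box\phi')\limp\Box(\phi\limp\phi')$ is PS5 (item~51). The remaining Fischer-Servi principle—distribution of $\Diamond$ over implication in the appropriate intuitionistic form—either follows from PS2 together with MDN (item~58, $\Iproofdiamond{M}{\phi}{\CM}{}\lequiv\neg\neg\Iproves{M}{\phi}{\CM}{}$) and intuitionistic propositional reasoning, or is subsumed by the combination PS2/PS5; I would spell out that small derivation. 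For Wijesekera's propositional fragment \cite[Section~1.5]{ConstructiveModalLogicI}, the axiomatisation is weaker still (it drops one of the two Fischer-Servi axioms, keeping $\Diamond(\phi\lor\phi')\lequiv(\Diamond\phi\lor\Diamond\phi')$, $\neg\Diamond\false$, and $\Box(\phi\limp\phi')\limp(\Diamond\phi\limp\Diamond\phi')$), so the very same items of Theorem~\ref{theorem:SomeUsefulDeducibleLogicalLaws} cover it \emph{a fortiori}. Finally, to justify ``refinement and extension'' I would note that LIiP's modality carries the genuine parameters $M$ (proof term) and $\CM$ (reviewer), so IK's mono-modal language embeds as the special case where these are fixed—hence refinement—and that LIiP proves laws IK does not, \eg the T-like ET (hence TMM, item~8) and the internalised disjunction property EIDP (item~42), neither of which is valid in IK; hence extension.

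The main obstacle I expect is purely bookkeeping with a subtlety: one must check that IK's \emph{necessitation rule}, which in IK permits inferring $\Box\phi$ from a \emph{theorem} $\phi$, is faithfully matched by LIiP's item~0—and indeed it is, since N gives $\set{\phi}\LIiPded\Iproves{M}{\phi}{\CM}{}$, so in particular $\LIiPded\phi$ implies $\LIiPded\Iproves{M}{\phi}{\CM}{}$. A second delicate point is that IK's interaction axioms must be matched with \emph{the same} term $M$ and reviewer $\CM$ on every modal occurrence; one should verify that each cited law (PS2, PS4, PS5, item~26) indeed uses a single uniform $M$ and $\CM$, which inspection of Theorem~\ref{theorem:SomeUsefulDeducibleLogicalLaws} confirms. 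Beyond this, the proof is a short citation-driven verification; the genuinely new content—the laws PS2--PS5, MDN, EIDP—was already done in Appendix~\ref{appendix:Proofs:LogicalLaws}, so here I would simply assemble the correspondence and remark that the strict-extension direction is witnessed by any LIiP-theorem, such as TMM, that fails in IK.
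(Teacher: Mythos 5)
Your overall strategy is exactly the paper's: the proof of Corollary~\ref{corollary:IK} is a checklist identifying IK's axiomatisation (IL, K, MP, N plus the Fischer-Servi/Plotkin-Stirling interaction axioms) with laws already established for LIiP, namely the K and N laws together with Theorem~\ref{theorem:SomeUsefulDeducibleLogicalLaws}.25 ($\neg(\Iproofdiamond{M}{\false}{\CM}{})$) and Theorem~\ref{theorem:SomeUsefulDeducibleLogicalLaws}.44--46 (PS4, PS2, PS5). For the IK half your argument is therefore sound in substance, although your item numbers are systematically off: PS4, PS2, PS5 and ``nothing can disprove tautological truth'' are items 44, 45, 46 and 25, not 49, 50, 51 and 26 (likewise EIDP is item 40, not 42). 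Also, there is no ``remaining'' sixth Fischer-Servi principle to derive---the five axioms K1--K5 are exactly K, PS2, $\neg\Diamond\false$, PS4 and PS5---so the extra derivation you propose via MDN is unnecessary.

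The one substantive gap is the Wijesekera half. You assert that the propositional fragment of Wijesekera's system keeps $\Diamond(\phi\lor\phi')\lequiv(\Diamond\phi\lor\Diamond\phi')$ and $\neg\Diamond\false$ and drops a Fischer-Servi axiom; the paper instead identifies that fragment with IL, K, MP, N plus the analogs of Theorem~\ref{theorem:SomeUsefulDeducibleLogicalLaws}.45 (PS2), \ref{theorem:SomeUsefulDeducibleLogicalLaws}.59 (Wiv) and \ref{theorem:SomeUsefulDeducibleLogicalLaws}.60 (Wv)---indeed, the laws named ``Wiv'' and ``Wv'' were proved in the theorem precisely to serve this corollary, and Wijesekera's system notoriously does \emph{not} take the distribution of $\Diamond$ over disjunction as an axiom. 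Your ``a fortiori'' step thus verifies the wrong axiom set; the conclusion still happens to hold because LIiP derives Wiv and Wv as well, but as written your argument does not discharge the actual proof obligation for that half of the statement. Your closing remarks on ``refinement'' (parameters $M$, $\CM$) and strict ``extension'' (witnessed by TMM or EIDP) are a reasonable elaboration that the paper leaves implicit.
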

\begin{proof}
	In fact, 
		Plotkin and Stirling's axiomatisation of IK, which 
			is equivalent to Fischer Servi's, 
				consists of the axioms of IL and the laws K, MP, N as well as the axiom analogs of 
					Theorem~\ref{theorem:SomeUsefulDeducibleLogicalLaws}.25 and 
					\ref{theorem:SomeUsefulDeducibleLogicalLaws}.44--\ref{theorem:SomeUsefulDeducibleLogicalLaws}.46; and
		the propositional fragment of Wijesekera's system consists of the axioms of IL and the laws K, MP, N as well as the axiom analogs of 
					Theorem~\ref{theorem:SomeUsefulDeducibleLogicalLaws}.45, 
					\ref{theorem:SomeUsefulDeducibleLogicalLaws}.59 and 
					\ref{theorem:SomeUsefulDeducibleLogicalLaws}.60.
\end{proof}

The following corollary asserts that 
	our disjunctive proof modality is also 
		an \emph{explicit refinement} of the standard (implicit) belief modality \cite{MultiAgents}.
\begin{corollary}[Disjunctive Explicit Belief]\label{corollary:DEB}\ 
	`$\Iproves{M}{\cdot}{\CM}{}$' is 
				a \emph{disjunctive KD45-modality} of explicit agent belief,
	where 
		$M$ represents the explicit evidence term that can justify the agent $\CM$'s belief.
	Additionally,
		the communication medium $\CM$ is a truth-believing agent in the sense that 
			$\LIiPded\phi\limp\Iproves{M}{\phi}{\CM}{}$.
			
\end{corollary}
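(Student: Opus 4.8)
The plan is to read the corollary off directly from the axioms of LIiP together with Theorem~\ref{theorem:SomeUsefulDeducibleLogicalLaws}, by checking that for each fixed evidence term $M$ the modality `$\Iproves{M}{\cdot}{\CM}{}$' validates the schemata that define a normal $\mathrm{KD45}$ operator of (explicit) belief in the sense of \cite{MultiAgents}, together with the disjunction property that makes it \emph{disjunctive}; the final ``truth-believing'' clause is then nothing but axiom MM. No genuinely new deduction is needed: the work has been front-loaded into Theorem~\ref{theorem:SomeUsefulDeducibleLogicalLaws}, and the corollary is a matter of \emph{identifying} the relevant laws and noting their schematicity in $M$.

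Concretely, I would proceed as follows. \emph{Normality} ($\mathrm{K}$ and necessitation): axiom K is a primitive of LIiP (Definition~\ref{definition:AxiomsRules}), necessitation is Theorem~\ref{theorem:SomeUsefulDeducibleLogicalLaws}.0, and together with \emph{modus ponens} and substitution-invariance (Proposition~\ref{proposition:Hilbert}) this is exactly the instance of Corollary~\ref{corollary:Normality} for `$\Iproves{M}{\cdot}{\CM}{}$'. \emph{Axiom $\mathrm{D}$}: the intuitionistically admissible form is axiom ID, which plays the $\mathrm{D}$ role; its consistency reading $\LIiPded\neg(\Iproves{M}{\false}{\CM}{})$ is Theorem~\ref{theorem:SomeUsefulDeducibleLogicalLaws}.25, and the classically-shaped $\LIiPded(\Iproves{M}{\phi}{\CM}{})\limp\neg(\Iproves{M}{\neg\phi}{\CM}{})$ follows from it with proof conjunctions (Theorem~\ref{theorem:SomeUsefulDeducibleLogicalLaws}.33) and regularity (Theorem~\ref{theorem:SomeUsefulDeducibleLogicalLaws}.2). \emph{Axiom $4$} (positive introspection) is Theorem~\ref{theorem:SomeUsefulDeducibleLogicalLaws}.30 and \emph{axiom $5$} (negative introspection) is Theorem~\ref{theorem:SomeUsefulDeducibleLogicalLaws}.31. \emph{Disjunctivity} is witnessed by EIDP, Theorem~\ref{theorem:SomeUsefulDeducibleLogicalLaws}.40 (the $\CM$-knowledge-conditioned disjunction property $\knows{\CM}{M}\limp((\Iproves{M}{(\phi\lor\phi')}{\CM}{})\limp((\Iproves{M}{\phi}{\CM}{})\lor\Iproves{M}{\phi'}{\CM}{}))$), together with its self-proof, Theorem~\ref{theorem:SomeUsefulDeducibleLogicalLaws}.41 --- the sense of ``disjunctive'' appropriate to a $\CM$-centred notion of proof, and a property that standard $\mathrm{KD45}$ belief lacks, whence the refinement is proper. \emph{Explicitness} is built into the notation: `$\Iproves{M}{\cdot}{\CM}{}$' carries the evidence term $M$ and is governed by the proof-term laws --- self-knowledge (Definition~\ref{definition:AxiomsRules}) and proof extension (Theorem~\ref{theorem:SomeUsefulDeducibleStructuralLaws}.9--\ref{theorem:SomeUsefulDeducibleStructuralLaws}.11) --- so that it refines the implicit belief box with explicit justifications; that it is a genuine \emph{refinement} is witnessed by ET, $\LIiPded(\Iproves{M}{\phi}{\CM}{})\limp(\knows{\CM}{M}\limp\phi)$, which shows that once the evidence $M$ is actually held by $\CM$ the belief is upgraded to truth. \emph{Truth-believing}: $\LIiPded\phi\limp\Iproves{M}{\phi}{\CM}{}$ is literally axiom MM.

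The main obstacle --- such as it is --- is interpretive rather than technical. One must argue that the intuitionistically weakened laws are the right $\mathrm{KD45}$-plus-disjunction analogues: that ID, rather than the classical $\Box\phi\limp\neg\Box\neg\phi$ (unavailable intuitionistically, see the discussion following Definition~\ref{definition:AxiomsRules}), is the correct $\mathrm{D}$, and that EIDP conditioned on $\knows{\CM}{M}$ is the correct disjunctive strengthening in the explicit, communication-medium-centred setting. Dually, one must be careful \emph{not} to invoke the $\mathrm{T}$-law $\Iproves{M}{\phi}{\CM}{}\limp\phi$: only ET holds, so the operator is genuinely belief-like and not knowledge-like, which is exactly the niche of $\mathrm{KD45}$ and keeps this corollary distinct from the S5/S4 reading of Corollary~\ref{corollary:DEP}. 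Finally I would pin down the comparison with \cite{MultiAgents}: standard (implicit) belief is axiomatised by precisely $\mathrm{K}$, $\mathrm{D}$, $4$, $5$ plus necessitation, all matched above, while LIiP additionally proves MM and EIDP, so `$\Iproves{M}{\cdot}{\CM}{}$' is at once an extension (more laws) and a refinement (explicit evidence term) of it.
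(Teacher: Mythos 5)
Your proposal is correct and follows essentially the same route as the paper's own proof, which likewise establishes the corollary by pointing to the K-law and ID from Definition~\ref{definition:AxiomsRules}, the 4- and 5-laws (Theorem~\ref{theorem:SomeUsefulDeducibleLogicalLaws}.30 and .31), IDP self-proof (Theorem~\ref{theorem:SomeUsefulDeducibleLogicalLaws}.41), the MM-law, and necessitation (Theorem~\ref{theorem:SomeUsefulDeducibleLogicalLaws}.0). Your additional interpretive remarks (on ID versus the classical shape of D, and on ET marking the refinement) are consistent with the paper's surrounding discussion but not needed for the proof itself.
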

\begin{proof}
	Consider that 
		`$\Iproves{M}{\cdot}{\CM}{}$' satisfies 
			the K-law (\cf Definition~\ref{definition:AxiomsRules}), 
			the D-law (called ID in Definition~\ref{definition:AxiomsRules}),
			the 4-law (\cf Theorem~\ref{theorem:SomeUsefulDeducibleLogicalLaws}.30),
			the 5-law (\cf Theorem~\ref{theorem:SomeUsefulDeducibleLogicalLaws}.31), 
			IDP self-proof (\cf Theorem~\ref{theorem:SomeUsefulDeducibleLogicalLaws}.41), 
			the MM-law (\cf Definition~\ref{definition:AxiomsRules}), and
			the N-law (\cf Theorem~\ref{theorem:SomeUsefulDeducibleLogicalLaws}.0).
\end{proof}
Thanks to Theorem~\ref{theorem:SomeUsefulDeducibleLogicalLaws}.10, 
	$\knows{\CM}{M}$ is a sufficient condition for `$\Iproves{M}{\cdot}{\CM}{}$' to
		behave like a standard S5-modality of \emph{perfect knowledge}\label{page:PerfectKnowledge}   
		(in a technical sense) \cite{MultiAgents,Epistemic_Logic,EpistemicLogicFiveQuestions}, which 
			in addition to being a KD45-modality not only obeys the D-law but also the stronger T-law (knowledge, not only belief) and the MM-law (\emph{perfect} knowledge):  
				$$\LIiPded\knows{\CM}{M}\limp((\Iproves{M}{\phi}{\CM}{})\lequiv\phi).$$
\begin{remark}[Perfect knowledge]\label{remark:PerfectKnowledge}
In interactive settings,
	only the communication medium $\CM$, 
		through which all messages have to pass, 
			can attain perfect knowledge 
				(the other agents having only 
					partial visibility of the network, 
						\cf Definition~\ref{definition:SemanticIngredients}).
However note that 
	LIiP being propositionally modal-intuitionistic,  
	this perfect knowledge is of propositional invariants of 
		the communication network only (\cf Page~\pageref{page:invariants}).  
So the epistemic perfection of the communication medium is only within a certain \emph{grain} (propositional) and \emph{scope} (invariants).
\end{remark}

In the following corollary, 
	we construct also 
		a disjunctive explicit refinement of (implicit) S4-provability.
\begin{corollary}[Disjunctive Explicit Provability]\label{corollary:DEP}
	`\,$\knows{\CM}{M}\land\Iproves{M}{\cdot}{\CM}{}$' is 
		a \emph{disjunctive S4-modality} of explicit agent provability, where 
			$M$ represents the explicit evidence term that does justify agent $\CM$'s knowledge.
\end{corollary}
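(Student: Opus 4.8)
The plan is to check, following the pattern of the proof of Corollary~\ref{corollary:DEB}, that the composite operator $\Box_M\phi$, abbreviating $\knows{\CM}{M}\land\Iproves{M}{\phi}{\CM}{}$, validates the defining schemas of a \emph{disjunctive} S4 modality, namely the K-, T- and 4-laws together with the disjunction law $\Box_M(\phi\lor\phi')\limp(\Box_M\phi\lor\Box_M\phi')$. Every one of these reductions is already available among Definition~\ref{definition:AxiomsRules} and Theorem~\ref{theorem:SomeUsefulDeducibleLogicalLaws}, so the only real work is the intuitionistic-propositional bookkeeping that moves the extra conjunct $\knows{\CM}{M}$ in and out.

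Concretely: for K, assume $\Box_M(\phi\limp\phi')$ and $\Box_M\phi$; the two copies of $\knows{\CM}{M}$ give the first conjunct of $\Box_M\phi'$, while Kripke's law K (Definition~\ref{definition:AxiomsRules}) applied to $\Iproves{M}{(\phi\limp\phi')}{\CM}{}$ and $\Iproves{M}{\phi}{\CM}{}$ gives $\Iproves{M}{\phi'}{\CM}{}$. For T, $\Box_M\phi\limp\phi$ is just the uncurried form of axiom ET, equivalently a one-line consequence of ET \emph{bis} (Theorem~\ref{theorem:SomeUsefulDeducibleLogicalLaws}.10); this is exactly the step that promotes belief to knowledge, i.e., KD45 to S4. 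For 4, $\Box_M\phi\limp\Box_M\Box_M\phi$: the outer $\knows{\CM}{M}$ transfers verbatim, and the inner goal $\Iproves{M}{(\knows{\CM}{M}\land\Iproves{M}{\phi}{\CM}{})}{\CM}{}$ is precisely the forward direction of EI \emph{bis} (Theorem~\ref{theorem:SomeUsefulDeducibleLogicalLaws}.43) applied to the hypothesis $\Iproves{M}{\phi}{\CM}{}$ (alternatively one composes 4 of Theorem~\ref{theorem:SomeUsefulDeducibleLogicalLaws}.30 with EI of Theorem~\ref{theorem:SomeUsefulDeducibleLogicalLaws}.42). For the disjunction law, assume $\knows{\CM}{M}\land\Iproves{M}{(\phi\lor\phi')}{\CM}{}$; now that $\knows{\CM}{M}$ is in hand, EIDP (Theorem~\ref{theorem:SomeUsefulDeducibleLogicalLaws}.40) yields $(\Iproves{M}{\phi}{\CM}{})\lor\Iproves{M}{\phi'}{\CM}{}$, and a propositional case split that re-attaches $\knows{\CM}{M}$ to the surviving disjunct finishes it. This last point is the only place the epithet \emph{disjunctive} is actually used: it is the knowledge conjunct that upgrades the merely self-proved disjunction property of the belief modality (IDP self-proof, Theorem~\ref{theorem:SomeUsefulDeducibleLogicalLaws}.41) into an honest disjunction property for $\Box_M$.

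The one point I would flag rather than prove is necessitation. Plain necessitation fails for $\Box_M$, since $\knows{\CM}{M}$ is not a theorem; what survives is either the conditional form ``$\LIiPded\phi$ implies $\LIiPded\knows{\CM}{M}\limp\Box_M\phi$'' or the double-negated form $\LIiPded\phi\limp\neg\neg\Box_M\phi$, the latter from MM together with CMMC (Theorem~\ref{theorem:SomeUsefulDeducibleLogicalLaws}.11) and the intuitionistic validity of $A\land\neg\neg B\limp\neg\neg(A\land B)$. This is exactly the behaviour expected of an \emph{explicit} provability modality in the style of the Logic of Proofs: the evidence term $M$ must first be supplied to (be known by) $\CM$ before $\Box_M$ becomes assertible, which is what the clause ``$M$ does justify $\CM$'s knowledge'' records. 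I anticipate no other obstacle; the remainder is the routine assembly of the four reductions above inside intuitionistic propositional logic.
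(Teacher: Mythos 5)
Your proposal is correct and follows essentially the same route as the paper, which likewise obtains the T-law from Theorem~\ref{theorem:SomeUsefulDeducibleLogicalLaws}.10, the disjunction law from Theorem~\ref{theorem:SomeUsefulDeducibleLogicalLaws}.40, and the remaining KD45/4 structure from Corollary~\ref{corollary:DEB}; your use of EI \emph{bis} for the composite 4-law is exactly the right bookkeeping. Your flag about necessitation failing for the composite modality is a fair observation that the paper leaves implicit, and your conditional/double-negated substitutes are the correct repair.
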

\begin{proof}
	By Corollary~\ref{corollary:DEB}, 
		Theorem~\ref{theorem:SomeUsefulDeducibleLogicalLaws}.10, and
		Theorem~\ref{theorem:SomeUsefulDeducibleLogicalLaws}.40:
			The T-law  
				$\LDiiPded(\knows{\CM}{M}\land\Iproves{M}{\phi}{\CM}{})\limp\phi$ for 
					the modality `$\knows{\CM}{M}\land\Iproves{M}{\cdot}{\CM}{}$' can be recognised by 
						inspecting Theorem~\ref{theorem:SomeUsefulDeducibleLogicalLaws}.10, and 
			the disjunctivity $\LDiiPded(\knows{\CM}{M}\land\Iproves{M}{(\phi\lor\phi')}{\CM}{})\limp 
				((\knows{\CM}{M}\land\Iproves{M}{\phi}{\CM}{})\lor(\knows{\CM}{M}\land\Iproves{M}{\phi'}{\CM}{}))$ 
		by inspecting Theorem~\ref{theorem:SomeUsefulDeducibleLogicalLaws}.40.
\end{proof}

\subsection{Semantically}\label{section:Semantically}
We continue to present 
	the concretely constructed semantics as well as 
	the standard abstract semantic interface for LIiP, and 
prove the axiomatic adequacy of the proof system with respect to this interface.
The core ingredient of the concrete semantics of LIiP are so-called \emph{input histories,} which
	were introduced in \cite{LDiiP,KramerIMLA2013} and 
	could also be used in an even more concrete semantics of LiP. 
Input histories 
	are finite words of input events and 
	serve as concrete states $s\in\states$ in 
		the state space $\states$, on which 
			the concrete and abstract accessibility relation 
				${\pAccess{M}{\CM}{}}\subseteq\states\times\states$ and
				${\access{M}{\CM}{}}\subseteq\states\times\states$ for LIiP is defined, respectively.
The reader of \cite{LiP,KramerICLA2013} will recognise 
	similar but simpler definitions here;  
		the one of ${\pAccess{M}{\CM}{}}$ could be even simpler (\cf Fact~\ref{fact:identities}.2), but 
			is as now in order to allow for 
				a simpler, pattern-matching comparison with 
					the corresponding one in \cite{LiP,KramerICLA2013}.
(We wanted to show how to produce LIiP from LiP.)

\subsubsection{Concretely}\label{section:Concretely}

\begin{definition}[Semantic ingredients]\label{definition:SemanticIngredients}
For the set-theoretically constructive, model-theoretic study of LIiP let 
\begin{itemize}
	\item $\states\ni s\bnfeq\mathtt{0}\bnfor \gscc{a}{M}(s)$ designate the concrete state space $\states$ of
			\emph{input histories} $s$, where 
				$\mathtt{0}$ designates the empty input history  
					(\ie a zero data point, \eg an initial state) and 
				$\gscc{a}{M}$ can be read as ``agent $a$ receives message $M$''  
					(\eg from some other agent acting as an oracle for $a$); and 
					$\star:(\states\times\states)\to\states$ monoidal concatenation on $\states$
						(with neutral element $\mathtt{0}$);
	\item $\pi_{a}:\states\rightarrow\states$ designate (local) \emph{state projection on $a$'s view} such that 
			\begin{align*}
				\pi_{a}(\mathtt{0}) &\defeq \mathtt{0}\\
				\pi_{a}(\gscc{b}{M}(s)) &\defeq 
					\begin{cases}
						\gscc{b}{M}(\pi_{a}(s)) & \text{if $a\in\set{b,\CM}$, and}\\
						\pi_{a}(s) & \text{otherwise;}
					\end{cases}
			\end{align*}
			(The communication medium $\CM$ sees any agent's $b$ [including its own] input events, \ie
				$\CM$ has a global view on the current global state $s$.)
	\item $\msgs{}:\states\rightarrow\powerset{\messages}$ designate \emph{raw-data extraction} such that 
			\begin{align*}
				\msgs{}(\mathtt{0}) &\defeq \emptyset\\
				\msgs{}(\gscc{a}{M}(s)) &\defeq \msgs{}(s)\cup\set{M}\,;
			\end{align*}
	\item $\msgs{a}\defeq\msgs{}\circ\pi_{a}$ designate (local) \emph{raw-data extraction by $a\,;$}
		\item $\clo{a}{s}:\powerset{\messages}\rightarrow\powerset{\messages}$ designate a \emph{data-mining operator} such that \label{page:DataMining}
			$\clo{a}{s}(\data)\defeq\clo{a}{}(\msgs{a}(s)\cup\data)\defeq\bigcup_{n\in\mathbb{N}}\clo{a}{n}(\msgs{a}(s)\cup\data)$, where for all $\data\subseteq\messages:$
				\begin{eqnarray*}
					\clo{a}{0}(\data) &\defeq& \set{a}\cup\data\\
					\clo{a}{n+1}(\data) &\defeq& 
						\begin{array}[t]{@{}l@{}}
							\clo{a}{n}(\data)\ \cup\\
							\setst{\pair{M}{M'}}{\set{M,M'}\subseteq\clo{a}{n}(\data)}\cup\quad\text{(pairing)}\\
							\setst{M, M'}{\pair{M}{M'}\in\clo{a}{n}(\data)}\quad\text{(unpairing)}
						\end{array}
				\end{eqnarray*}
				($\clo{a}{s}(\emptyset)$ can be viewed as $a$'s \emph{individual-knowledge base} in $s$. 
					For application-specific terms such as signing and encryption, 
					we would have to add here the closure conditions corresponding to their characteristic term axioms.)
	\item ${\sqsubseteq_{a}}\subseteq\states\times\states$ designate 
		the (local) \emph{state \textbf{pre-}order of $a$} such that 
			for all $s,s'\in\states$, 
				$s\sqsubseteq_{a} s'$ :iff 
					there is $s''\in\states$ such that 
						$\pi_{a}(s)\star\pi_{a}(s'')=\pi_{a}(s')\,;$
	\item ${\sqsubseteq}\defeq{\sqsubseteq_{\CM}}$ designate the (global) \emph{state \textbf{partial} order} 
			serving as the concrete accessibility relation in the Kripke-semantics for the I-fragment of LIiP;
	
			($\sqsubseteq$ is partial thanks to 
				$\CM$ seeing any agent's input events.)
	\item ${\equiv_{a}}\defeq{\sqsubseteq_{a}}\cap(\sqsubseteq_{a})^{-1}$ designate the (local) \emph{state equivalence of $a$;}
	\item ${\pAccess{M}{\CM}{}}\subseteq\states\times\states$ designate 
		the \emph{\textbf{concretely constructed} accessibility relation}---short, \emph{\textbf{concrete} accessibility}---for LIiP such that for all $s,s'\in\states$,  
			\begin{eqnarray*}
				s\pAccess{M}{\CM}{}s' &\defiff&  
								s'\in\hspace{-3.25ex} 
					\bigcup_{\scriptsize 
					\begin{array}{@{}c@{}}
						\text{$s\sqsubseteq_{\CM}{}\tilde{s}$ and
						}\\[0.5\jot] 
						M\in\clo{\CM}{\tilde{s}}(\emptyset) 
					\end{array}
					}\hspace{-3.25ex}
					[\tilde{s}]_{\indist{\CM}{}{}}\\
				&\text{(iff}& 
					\text{there is $\tilde{s}\in\states$ \st 
							$s\sqsubseteq_{\CM}{}\tilde{s}$ and
							$M\in\clo{\CM}{\tilde{s}}(\emptyset)$ and
							$\indist{\CM}{\tilde{s}}{s'}$).}
			\end{eqnarray*}
\end{itemize}
\end{definition}

Note that
	the data-mining operator
		$\clo{a}{}:\powerset{\messages}\rightarrow\powerset{\messages}$ is a compact closure operator, which
		induces a \emph{data-derivation relation} ${\derives{a}{}{}}\subseteq\powerset{\messages}\times\messages$ such that $\derives{a}{M}{\data}$ :iff $M\in\clo{a}{}(\data)$, which  
	(1) has the compactness and (2) the cut property,  
	(3) is decidable in deterministic polynomial time in the size of $\data$ and $M$, and
	(4) induces a Scott information system of information tokens $M$ \cite{LiP}.
\begin{fact}\label{fact:identities}\ 
	\begin{enumerate}
		\item ${\equiv_{\CM}}=\mathrm{Id}_{\states}$
		\item $s\pAccess{M}{\CM}{}s'$ if and only if 
							($s\sqsubseteq s'$ and
							 $M\in\clo{\CM}{s'}(\emptyset)$)
		\item ${\pAccess{\CM}{\CM}{}}={\sqsubseteq}$
	\end{enumerate}
\end{fact}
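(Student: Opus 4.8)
The plan is to prove the three identities of Fact~\ref{fact:identities} in order, since each one feeds into the next, and all three are essentially direct unfoldings of the definitions in Definition~\ref{definition:SemanticIngredients}.

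For part~1, I would show $\equiv_{\CM} = \mathrm{Id}_{\states}$ by unfolding the definition ${\equiv_{\CM}} = {\sqsubseteq_{\CM}} \cap (\sqsubseteq_{\CM})^{-1}$ and recalling that $s \sqsubseteq_{\CM} s'$ holds iff there is $s''$ with $\pi_{\CM}(s)\star\pi_{\CM}(s'') = \pi_{\CM}(s')$. The key observations are that (i) $\pi_{\CM}$ is the identity on $\states$, since the communication medium sees every agent's input events (the first clause of the projection definition always applies when $a = \CM$), and (ii) $\star$ is monoidal concatenation with neutral element $\mathtt{0}$, so $s \star s'' = s'$ together with $s' \star s''' = s$ forces $s'' = s''' = \mathtt{0}$ and $s = s'$ by a length/cancellation argument on words. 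Hence $s \equiv_{\CM} s'$ iff $s = s'$. This is the step that does the real work, but it is still routine once one notes that concatenation of finite words admits left cancellation and that only $\mathtt{0}$ has a concatenation-inverse.

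For part~2, I would simply rewrite the displayed definition of $\pAccess{M}{\CM}{}$ using part~1: the equivalence class $[\tilde s]_{\indist{\CM}{}{}}$ collapses to the singleton $\set{\tilde s}$ by part~1, so $s \pAccess{M}{\CM}{} s'$ iff there is $\tilde s$ with $s \sqsubseteq_{\CM} \tilde s$, $M \in \clo{\CM}{\tilde s}(\emptyset)$, and $\tilde s = s'$; substituting $\tilde s = s'$ and recalling ${\sqsubseteq} = {\sqsubseteq_{\CM}}$ gives exactly $s \sqsubseteq s'$ and $M \in \clo{\CM}{s'}(\emptyset)$. For part~3, specialise part~2 to $M = \CM$: the right conjunct becomes $\CM \in \clo{\CM}{s'}(\emptyset)$, which holds vacuously since $\CM = a \in \clo{a}{0}(\data) \subseteq \clo{\CM}{s'}(\emptyset)$ (the agent always knows its own name, as already used for the axiom $\knows{a}{a}$); hence $s \pAccess{\CM}{\CM}{} s'$ iff $s \sqsubseteq s'$, \ie ${\pAccess{\CM}{\CM}{}} = {\sqsubseteq}$.

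I do not anticipate a genuine obstacle here; the only point requiring a little care is the cancellation argument in part~1 establishing that the preorder $\sqsubseteq_{\CM}$ is antisymmetric (indeed that it is generated by a genuine prefix/suffix relation on words after applying $\pi_{\CM} = \mathrm{id}$), and correspondingly that $\sqsubseteq$ is a partial order as asserted in the definition. Everything else is bookkeeping: recognising that $\pi_{\CM}$ is the identity, that $\clo{\CM}{s'}(\emptyset)$ contains $\CM$, and that collapsing a singleton equivalence class turns the big-union in the definition of $\pAccess{M}{\CM}{}$ into the plainer conjunctive form.
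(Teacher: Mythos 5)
Your proposal is correct and is simply a detailed spelling-out of what the paper dispatches with ``by inspection of definitions'': you unfold $\pi_{\CM}=\mathrm{id}$, the prefix characterisation of $\sqsubseteq_{\CM}$ with its cancellation argument for antisymmetry, the collapse of $[\tilde s]_{\equiv_{\CM}}$ to a singleton, and the membership $\CM\in\clo{\CM}{s'}(\emptyset)$, exactly as intended. No gap; the extra care you take with the word-cancellation step in part~1 is the only place where anything nontrivial happens, and it goes through for finite input histories.
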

\begin{proof}
	By inspection of definitions.
\end{proof}
\noindent 
Fact~\ref{fact:identities}.1 is important, because
	thanks to it 
		the communication medium $\CM$ can have perfect knowledge, 
			as asserted on Page~\pageref{page:PerfectKnowledge};
and Fact~\ref{fact:identities}.3 is, because 
	thanks to it 
		the partial order $\sqsubseteq$ for the Kripke-semantics of LIiP's intuitionistic connectives is absorbed as 
		a mere instance $\pAccess{\CM}{\CM}{}$ of the accessibility relation $\pAccess{M}{\CM}{}$ for the Kripke-semantics of LIiP's proof modality, as announced on Page~\pageref{page:PartialOrder}.
Fact~\ref{fact:identities}.2 captures the intuition of the concrete accessibility of LIiP.
Spelled out, 
	this intuition is that 
		$\CM$ can access 
			an input history $s'$ from 
			the current input history $s$ with respect to a piece of data $M$ in question 
				if and only if 
					$s'$ is an extension of $s$ such that 
						$M$ is in $\CM$'s individual-knowledge base at $s'$.
A simple example is that 
	$\mathtt{0}\pAccess{\CM}{M}{}\gscc{a}{M}(\mathtt{0})$, because
		$\mathtt{0}\sqsubseteq\gscc{a}{M}(\mathtt{0})$ and
		$M\in\clo{\CM}{\gscc{a}{M}(\mathtt{0})}(\emptyset)$.
A slightly more complex example is that 
	$\mathtt{0}\pAccess{\CM}{M'}{}\gscc{b}{\pair{M}{M'}}(\gscc{a}{M}(\mathtt{0}))$, because
		$\mathtt{0}\sqsubseteq\gscc{b}{\pair{M}{M'}}(\gscc{a}{M}(\mathtt{0}))$ and
		$M'\in\clo{\CM}{\gscc{b}{\pair{M}{M'}}(\gscc{a}{M}(\mathtt{0}))}(\emptyset)$.

We need the following auxiliary definition for 
	the proposition following it.
\begin{definition}[Message pre-ordering \cite{LiP,KramerICLA2013}]\label{definition:MessagePreordering}\ 
	\begin{itemize}
		\item $M\sqsubseteq_{a}^{s}M'$ :iff 
			if $M\in\clo{a}{s}(\emptyset)$ then $M'\in\clo{a}{s}(\emptyset)$
		\item $M\sqsubseteq_{a}M'$ :iff for all $s\in\states$, $M\sqsubseteq_{a}^{s}M'$
	\end{itemize}
\end{definition}
\noindent
Notice the definitional overloading of the notation $\sqsubseteq_{a}$, \ie  
	once as ${\sqsubseteq_{a}}\subseteq\states\times\states$ in Definition~\ref{definition:SemanticIngredients} and once as ${\sqsubseteq_{a}}\subseteq\messages\times\messages$ in the previous Definition~\ref{definition:MessagePreordering}.

\begin{proposition}[Concrete accessibility]\label{proposition:ConcreteAccessibility}\  
\begin{enumerate}
	\item If $s\pAccess{M}{\CM}{}s'$ then $M\in\clo{\CM}{s'}(\emptyset)$\quad(epistemic image)
	\item If $M\in\clo{\CM}{s}(\emptyset)$ then 
			$s\pAccess{M}{\CM}{}s$\quad(conditional reflexivity)
	\item there is $s'\in\states$ such that $s\pAccess{M}{\CM}{}s'$\quad(seriality)
	\item ${\pAccess{M}{\CM}{}}\subseteq{\pAccess{\CM}{\CM}{}}={\sqsubseteq}$
				\quad(MIAR-inclusion)
	\item $({\pAccess{\CM}{\CM}{}}\circ{\pAccess{M}{\CM}{}})\subseteq{\pAccess{M}{\CM}{}}$
				\quad(special transitivity)
	\item If $M\sqsubseteq_{\CM}M'$ 
			then ${\pAccess{M}{\CM}{}}\subseteq{\pAccess{M'}{\CM}{}}$\quad
				(proof monotonicity)
\end{enumerate}
\end{proposition}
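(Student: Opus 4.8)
The plan is to first replace the defining three-conjunct form of concrete accessibility by the much simpler characterisation recorded in Fact~\ref{fact:identities}: since $\equiv_{\CM}$ is the identity on $\states$ (Fact~\ref{fact:identities}.1), $s\pAccess{M}{\CM}{}s'$ holds iff $s\sqsubseteq s'$ and $M\in\clo{\CM}{s'}(\emptyset)$ (Fact~\ref{fact:identities}.2), and $\pAccess{\CM}{\CM}{}$ is exactly $\sqsubseteq$ (Fact~\ref{fact:identities}.3). It is also worth noting at the outset that $\pi_{\CM}=\mathrm{Id}_{\states}$ (the medium sees every event), so $\msgs{\CM}=\msgs{}$ and $s\sqsubseteq s'$ just means that $s$ is a $\star$-prefix of $s'$; in particular $\sqsubseteq$ is reflexive (witness $\mathtt{0}$) and transitive (compose the two prefix-witnesses, using associativity of $\star$). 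Against this backdrop, parts~1 and~4 are immediate: part~1 is the right conjunct of Fact~\ref{fact:identities}.2, and part~4 reads the left conjunct ($s\sqsubseteq s'$, i.e.\ $s\pAccess{\CM}{\CM}{}s'$) off Fact~\ref{fact:identities}.2 and then invokes Fact~\ref{fact:identities}.3 for the displayed equality.

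For part~2 (conditional reflexivity) I would combine reflexivity of $\sqsubseteq$ with the hypothesis $M\in\clo{\CM}{s}(\emptyset)$ and conclude via Fact~\ref{fact:identities}.2. For part~3 (seriality) I would exhibit a concrete witness: take $s'\defeq\gscc{\CM}{M}(s)$; then $s\sqsubseteq s'$ by construction (the new receive-event for $M$ extends $s$), and $M\in\msgs{}(s')=\msgs{}(s)\cup\set{M}$, hence $M\in\clo{\CM}{s'}(\emptyset)=\clo{\CM}{}(\msgs{}(s'))$ because a closure operator is inflationary; Fact~\ref{fact:identities}.2 then gives $s\pAccess{M}{\CM}{}s'$. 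For part~6 (proof monotonicity) I would merely unfold Definition~\ref{definition:MessagePreordering}: from $s\pAccess{M}{\CM}{}s'$ we get $s\sqsubseteq s'$ and $M\in\clo{\CM}{s'}(\emptyset)$, and $M\sqsubseteq_{\CM}M'$ (in particular $M\sqsubseteq_{\CM}^{s'}M'$) upgrades this to $M'\in\clo{\CM}{s'}(\emptyset)$, whence $s\pAccess{M'}{\CM}{}s'$ by Fact~\ref{fact:identities}.2 again.

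Part~5 (special transitivity) is the one place that needs a genuine, if small, auxiliary lemma, and I expect it to be the main obstacle. Unfolding the composition, suppose $s\pAccess{M}{\CM}{}s'$ and $s'\pAccess{\CM}{\CM}{}s''$; by Fact~\ref{fact:identities} this gives $s\sqsubseteq s'$, $M\in\clo{\CM}{s'}(\emptyset)$, and $s'\sqsubseteq s''$. Transitivity of $\sqsubseteq$ yields $s\sqsubseteq s''$; what remains is to push $M$ forward from $s'$ to $s''$, i.e.\ to show that $s'\sqsubseteq s''$ implies $\clo{\CM}{s'}(\emptyset)\subseteq\clo{\CM}{s''}(\emptyset)$. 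This I would prove in two steps: (i) $\msgs{}(w_1\star w_2)=\msgs{}(w_1)\cup\msgs{}(w_2)$, a routine induction from the defining clauses of $\msgs{}$, so that $s'\sqsubseteq s''$---i.e.\ $s'\star t=s''$ for some $t$---forces $\msgs{}(s')\subseteq\msgs{}(s'')$; and (ii) $\clo{\CM}{}$ is a monotone (closure) operator, so $\clo{\CM}{s'}(\emptyset)=\clo{\CM}{}(\msgs{}(s'))\subseteq\clo{\CM}{}(\msgs{}(s''))=\clo{\CM}{s''}(\emptyset)$. Hence $M\in\clo{\CM}{s''}(\emptyset)$, and with $s\sqsubseteq s''$ and Fact~\ref{fact:identities}.2 we conclude $s\pAccess{M}{\CM}{}s''$. (If one reads the composition in the opposite order, the argument is even shorter: the knowledge of $M$ is then already available at the final state, so only transitivity of $\sqsubseteq$ is needed.)
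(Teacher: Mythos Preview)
Your proposal is correct and, for parts~1--4 and~6, essentially identical to the paper's argument (the paper likewise reads everything off Fact~\ref{fact:identities} and the reflexivity/transitivity of $\sqsubseteq$, and for seriality chooses the same witness $s'=\gscc{\CM}{M}(s)$).

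The only noteworthy divergence is in part~5. You unfold $({\pAccess{\CM}{\CM}{}}\circ{\pAccess{M}{\CM}{}})$ in function-composition order (first ${\pAccess{M}{\CM}{}}$, then ${\pAccess{\CM}{\CM}{}}$), which forces you to prove the auxiliary closure-monotonicity lemma $s'\sqsubseteq s''\Rightarrow\clo{\CM}{s'}(\emptyset)\subseteq\clo{\CM}{s''}(\emptyset)$. The paper, however, uses the diagrammatic order---$s\mathrel{({\pAccess{\CM}{\CM}{}}\circ{\pAccess{M}{\CM}{}})}s'$ means there is $s''$ with $s\pAccess{\CM}{\CM}{}s''$ and $s''\pAccess{M}{\CM}{}s'$---so that $M\in\clo{\CM}{s'}(\emptyset)$ is already given at the terminal state and only transitivity of $\sqsubseteq$ is needed. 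You anticipate exactly this in your closing parenthetical, so your coverage is complete; just be aware that in this paper the parenthetical is the intended reading, and your ``main'' argument for part~5, while valid, proves a slightly different (also true) inclusion.
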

\begin{proof}
	For 1, inspect Fact~\ref{fact:identities}.2.
	For 2, inspect Fact~\ref{fact:identities}.2 and \ref{fact:identities}.3 and 
		the definition of $\sqsubseteq$ (to see that $\sqsubseteq$ is reflexive).
	For 3,
		inspect Fact~\ref{fact:identities}.2 and 
		let $s\in\states$.
		Then choose $s'=\gscc{\CM}{M}(s)\in\states$, which implies that 
			$s\sqsubseteq s'$ and
			$M\in\clo{\CM}{s'}(\emptyset)$.
	For 4, inspect Fact~\ref{fact:identities}.2 and \ref{fact:identities}.3 and 
		the definitional fact that $\CM\in\clo{\CM}{s'}(\emptyset)$.
	For 5,
		let $s,s'\in\states$ and
		suppose that $s\mathrel{({\pAccess{\CM}{\CM}{}}\circ{\pAccess{M}{\CM}{}})}s'$.
	That is,
		there is $s''\in\states$ such that 
			$s\pAccess{\CM}{\CM}{}s''$ and $s''\pAccess{M}{\CM}{}s'$.
	Hence, 
		$s\sqsubseteq s''$ and $\CM\in\clo{\CM}{s''}(\emptyset)$ as well as
		$s''\sqsubseteq s'$ and $M\in\clo{\CM}{s'}(\emptyset)$, by 
			Fact~\ref{fact:identities}.2.
	Hence 
		$s\sqsubseteq s'$ by the transitivity of $\sqsubseteq$.
	Hence $s\pAccess{M}{\CM}{}s'$ by Fact~\ref{fact:identities}.2.
	For 6,
		let $M,M'\in\messages$ and 
		suppose that $M\sqsubseteq_{\CM}M'$.
		Further, 
			let $s,s'\in\states$ and 
			suppose that $s\pAccess{M}{\CM}{}s'$.
		Hence 
			$M\sqsubseteq_{\CM}^{s'}M'$, and also 
			$s\sqsubseteq s'$ and $M\in\clo{\CM}{s'}(\emptyset)$ by Fact~\ref{fact:identities}.2.
		Hence $M'\in\clo{\CM}{s'}(\emptyset)$.
		Hence $s\pAccess{M'}{\CM}{}s'$ by Fact~\ref{fact:identities}.2.
\end{proof}
\noindent
Note that ``MIAR'' stands for ``modal-intuitionistic-accessibility-relation.''

\subsubsection{Abstractly}\label{section:Abstractly}

\begin{definition}[Kripke-model]\label{definition:KripkeModel}
We define the \emph{satisfaction relation} $\models$ for 
		LIiP in Table~\ref{table:SatisfactionRelation}, 
	\begin{table}[t]
	\centering
	\caption{Satisfaction relation}
	\smallskip
	\fbox{$\begin{array}{@{}rcl@{}}
		(\aModalFrame, \mathcal{V}), s\models P &\text{:iff}& s\in\mathcal{V}(P)\\[\jot]
		(\aModalFrame, \mathcal{V}), s\models\phi\lor\phi' &\text{:iff}& \text{$(\aModalFrame, \mathcal{V}), s\models\phi$ or $(\aModalFrame, \mathcal{V}), s\models\phi'$}\\[\jot]
		(\aModalFrame, \mathcal{V}), s\models\phi\land\phi' &\text{:iff}& \text{$(\aModalFrame, \mathcal{V}), s\models\phi$ and $(\aModalFrame, \mathcal{V}), s\models\phi'$}\\[\jot]
		(\aModalFrame, \mathcal{V}), s\models\neg\phi &\text{:iff}& 
			\text{for all $s'\in\states$, 
				if $s\sqsubseteq s'$ 
				then not $(\aModalFrame, \mathcal{V}), s'\models\phi$}\\[\jot]
		(\aModalFrame, \mathcal{V}), s\models\phi\limp\phi' &\text{:iff}& 
			\text{for all $s'\in\states$, 
				if $s\sqsubseteq s'$ 
				then 
					\begin{tabular}{@{}r@{}l@{}}
						(&not $(\aModalFrame, \mathcal{V}), s'\models\phi$\\  
						 &or $(\aModalFrame, \mathcal{V}), s'\models\phi'$)
					\end{tabular}}\\[\jot]
		(\aModalFrame, \mathcal{V}), s\models\Iproves{M}{\phi}{\CM}{} &\text{:iff}& 
			\begin{array}[t]{@{}l@{}}
				\text{for all $s'\in\states$, }
			 	\text{if $s\access{M}{\CM}{}s'$ then $(\aModalFrame, \mathcal{V}), s'\models\phi$}
			\end{array}
	\end{array}$}
	\label{table:SatisfactionRelation}
	\end{table}
where 
	\begin{itemize}
		\item $\mathcal{V}:\mathcal{P}\rightarrow\powerset{\states}$ designates a usual \emph{valuation function,} yet
			\begin{itemize}
				\item \emph{partially predefined} such that for all $a\in\agents$ and $M\in\messages$,
							$$\mathcal{V}(\knows{a}{M})\defeq\setst{s\in\states}{M\in\clo{a}{s}(\emptyset)}\,;$$
				
							(If agents are Turing-machines 
							then $a$ knowing $M$ can be understood as $a$ being able to parse $M$ on its tape.)
				\item \emph{constrained} such that for all $s,s'\in\states$,
						$$\text{if $s\in\mathcal{V}(P)$ and $s\sqsubseteq s'$ then $s'\in\mathcal{V}(P)\,;$}$$
						(following Kripke's semantics for IL)
			\end{itemize}
		\item $\aModalFrame\defeq(\states,\sqsubseteq,\set{\access{M}{\CM}{}}_{M\in\messages})$
			designates an intuitionistic modal \emph{frame} for LIiP with 
		a usual partial order ${\sqsubseteq}\subseteq\states\times\states$ for the intuitionistic part as well as 
		an \emph{\textbf{abstractly constrained} accessibility relation}---short, \emph{\textbf{abstract}  accessibility}---${\access{M}{\CM}{}}\subseteq\states\times\states$ for 
			LIiP such that---the \emph{semantic interface:}\label{page:AbstractProofAccessibility} 
			\begin{itemize}
				\item If $s\access{M}{\CM}{}s'$ then $M\in\clo{\CM}{s'}(\emptyset)$
				\item If $M\in\clo{\CM}{s}(\emptyset)$ then 
						$s\access{M}{\CM}{}s$
				\item there is $s'\in\states$ such that $s\access{M}{\CM}{}s'$
				\item ${\access{M}{\CM}{}}\subseteq{\access{\CM}{\CM}{}}={\sqsubseteq}$
				\item $({\access{\CM}{\CM}{}}\circ{\access{M}{\CM}{}})\subseteq{\access{M}{\CM}{}}$
				\item If $M\sqsubseteq_{\CM}M'$ 
						then ${\access{M}{\CM}{}}\subseteq{\access{M'}{\CM}{}}$
			\end{itemize}
	\item $(\aModalFrame,\mathcal{V})$ designates an intuitionistic modal \emph{model} for LIiP.
	\end{itemize}
\end{definition}
\noindent
Looking back, 
	we recognise that 
		Proposition~\ref{proposition:ConcreteAccessibility} 
	actually establishes the important fact that
		our concrete accessibility $\pAccess{M}{\CM}{}$ in 
		Definition~\ref{definition:SemanticIngredients} realises 
			all the properties stipulated by  
		our abstract accessibility $\access{M}{\CM}{}$ in Definition~\ref{definition:KripkeModel};
		we say that 
		$$\text{$\pAccess{M}{\CM}{}$ \emph{exemplifies} (or \emph{realises}) $\access{M}{\CM}{}$\,.}$$
Further, observe that 
	LIiP (like LiP) has a Herbrand-style semantics, \ie
			logical constants (agent names) and 
			functional symbols (pairing) are self-interpreted rather than 
		interpreted in terms of (other, semantic) constants and functions.
This simplifying design choice spares our framework from 
	the additional complexity that would arise from term-variable assignments \cite{FOModalLogic}, which in turn
		keeps our models propositionally modal.
Our choice is admissible because our individuals (messages) are finite.
(Infinitely long ``messages'' are non-messages; they can never be completely received, \eg
	transmitting irrational numbers as such is impossible.)

\begin{definition}[Truth \& Validity \cite{ModalLogicSemanticPerspective}]\label{definition:TruthValidity}\  
	\begin{itemize}
	\item The formula $\phi\in\pFormulas$ is \emph{true} (or \emph{satisfied}) 
		in the model $(\aModalFrame,\mathcal{V})$ at the state $s\in\states$ 
			:iff $(\aModalFrame,\mathcal{V}), s\models\phi$.
	\item The formula $\phi$ is \emph{satisfiable} in the model $(\aModalFrame,\mathcal{V})$ 
			:iff there is $s\in\states$ such that 
				$(\aModalFrame,\mathcal{V}), s\models\phi$.
	\item The formula $\phi$ is \emph{globally true} (or \emph{globally satisfied}) 
		in the model $(\aModalFrame,\mathcal{V})$, 
			written $(\aModalFrame,\mathcal{V})\models\phi$, :iff 
				for all $s\in\states$, $(\aModalFrame,\mathcal{V}),s\models\phi$.
	\item The formula $\phi$ is \emph{satisfiable}  
			:iff there is a model $(\aModalFrame,\mathcal{V})$ and a state $s\in\states$ such that 
				$(\aModalFrame,\mathcal{V}),s\models\phi$.	
	\item The formula $\phi$ is \emph{valid}, written $\models\phi$, :iff 
			for all models $(\aModalFrame,\mathcal{V})$, $(\aModalFrame,\mathcal{V})\models\phi$.
	\end{itemize}
\end{definition}
\noindent
The following lemma is a \emph{passage oblig\'{e}} in the construction of 
	a Kripke-semantics for any intuitionistic logic, modal or not, and thus also for ours.
\begin{lemma}[Monotonicity Lemma]
	For all LIiP-models $(\aModalFrame,\mathcal{V})$, $s,s'\in\states$, and $\phi\in\pFormulas$, 
		if $s\sqsubseteq s'$ and $(\aModalFrame,\mathcal{V}),s\models\phi$ 
		then $(\aModalFrame,\mathcal{V}),s'\models\phi$.
\end{lemma}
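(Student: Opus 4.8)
The plan is to prove the Monotonicity Lemma by a routine structural induction on the formula $\phi$, the standard strategy for Kripke-semantics of intuitionistic logics. The base case is $\phi = P$ for a propositional variable $P$: here the conclusion is exactly the constraint imposed on the valuation function $\mathcal{V}$ in Definition~\ref{definition:KripkeModel} (``if $s\in\mathcal{V}(P)$ and $s\sqsubseteq s'$ then $s'\in\mathcal{V}(P)$''). Note that the distinguished atoms $\knows{a}{M}$ are a special case; one should check that their predefined valuation $\mathcal{V}(\knows{a}{M}) = \setst{s}{M\in\clo{a}{s}(\emptyset)}$ actually satisfies this constraint, i.e.\ that $\clo{a}{s}(\emptyset)\subseteq\clo{a}{s'}(\emptyset)$ whenever $s\sqsubseteq s'$ --- this follows because $s\sqsubseteq s'$ (via $\sqsubseteq_{\CM}$, but also relevant for $\sqsubseteq_a$) gives $\pi_a(s)\star\pi_a(s'') = \pi_a(s')$ for some $s''$, so $\msgs{a}(s)\subseteq\msgs{a}(s')$ and the data-mining operator is monotone in its argument.

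Next I would handle the connective cases. For $\phi\lor\phi'$ and $\phi\land\phi'$, the connectives are interpreted at the \emph{current} state, so monotonicity transfers componentwise by the induction hypothesis --- entirely routine. For the negative connectives $\neg\phi$ and $\phi\limp\phi'$, the semantic clauses already quantify over all $\sqsubseteq$-successors of the current state; the key observation is simply the transitivity of $\sqsubseteq$: if $\neg\phi$ (resp.\ $\phi\limp\phi'$) holds at $s$ and $s\sqsubseteq s'$, then any $s''$ with $s'\sqsubseteq s''$ also satisfies $s\sqsubseteq s''$ by transitivity, so the universally quantified condition carried by $s$ immediately yields the one required at $s'$. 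Here no appeal to the induction hypothesis is even needed.

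The modal case $\phi = \Iproves{M}{\psi}{\CM}{}$ is the one requiring a little more care, and is the main point of interest. Suppose $s\sqsubseteq s'$ and $(\aModalFrame,\mathcal{V}),s\models\Iproves{M}{\psi}{\CM}{}$; I must show $(\aModalFrame,\mathcal{V}),s'\models\Iproves{M}{\psi}{\CM}{}$, i.e.\ for every $t$ with $s'\access{M}{\CM}{}t$ we have $(\aModalFrame,\mathcal{V}),t\models\psi$. The crux is to reduce $s'\access{M}{\CM}{}t$ back to $s\access{M}{\CM}{}t$. Since $s\sqsubseteq s'$ and $\sqsubseteq = {\access{\CM}{\CM}{}}$ by the semantic interface in Definition~\ref{definition:KripkeModel}, we have $s\access{\CM}{\CM}{}s'$; composing with $s'\access{M}{\CM}{}t$ gives $s\mathrel{({\access{\CM}{\CM}{}}\circ{\access{M}{\CM}{}})}t$, and the ``special transitivity'' clause $({\access{\CM}{\CM}{}}\circ{\access{M}{\CM}{}})\subseteq{\access{M}{\CM}{}}$ delivers $s\access{M}{\CM}{}t$. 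Then the hypothesis at $s$ gives $(\aModalFrame,\mathcal{V}),t\models\psi$ directly --- again the induction hypothesis for $\psi$ is not even needed, since the monotonicity of the proof modality is already built into the frame conditions. The only genuine obstacle is making sure all the abstract-accessibility properties invoked (particularly ${\access{\CM}{\CM}{}} = {\sqsubseteq}$ and special transitivity) are exactly those stipulated in the semantic interface, which Proposition~\ref{proposition:ConcreteAccessibility} confirms are realised by the concrete relation; so the argument goes through uniformly for both the abstract and concrete readings of the model.
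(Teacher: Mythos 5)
Your proposal is correct and follows essentially the same route as the paper's own proof: structural induction on $\phi$, with the base case discharged by the valuation constraint, the positive connectives by the induction hypothesis, the negative connectives by transitivity of $\sqsubseteq$, and the modal case by MIAR-inclusion (${\access{\CM}{\CM}{}}={\sqsubseteq}$) followed by special transitivity, with the induction hypothesis indeed unused in the negative and modal cases. Your additional check that the predefined valuation of the atoms $\knows{a}{M}$ respects the monotonicity constraint is a sensible well-definedness remark but not part of the paper's argument, which simply invokes the stipulated constraint for all $P\in\mathcal{P}$.
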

\begin{proof} 
	Let $(\aModalFrame,\mathcal{V})$ designate an arbitrary LIiP-model and let $s,s'\in\states$.
	Then let us proceed by induction on the structure of $\phi\in\pFormulas:$
	\begin{itemize}
		\item Base case ($\phi\defeq P$ for an arbitrary $P\in\mathcal{P}\subseteq\pFormulas$).
				Suppose that $s\sqsubseteq s'$ and $(\aModalFrame,\mathcal{V}),s\models P$.
				Thus $s\in\mathcal{V}(P)$, by definition.
				Hence $s'\in\mathcal{V}(P)$, by the definitional constraint on $\mathcal{V}$.
				Thus $(\aModalFrame,\mathcal{V}),s'\models P$, by definition.
		\item Inductive steps:
			\begin{itemize}
				\item $\phi\defeq \phi'\lor\phi''$ for arbitrary $\phi',\phi''\in\pFormulas$.
						For the sake of the induction, suppose 
							that 
								if $s\sqsubseteq s'$ and $(\aModalFrame,\mathcal{V}),s\models\phi'$
								then $(\aModalFrame,\mathcal{V}),s'\models\phi'$ and 
							that 
								if $s\sqsubseteq s'$ and $(\aModalFrame,\mathcal{V}),s\models\phi''$
								then $(\aModalFrame,\mathcal{V}),s'\models\phi''$.
						Further suppose that 
							$s\sqsubseteq s'$ and $(\aModalFrame,\mathcal{V}),s\models\phi'\lor\phi''$.
						Thus 
							$(\aModalFrame,\mathcal{V}),s\models\phi'$ or
							$(\aModalFrame,\mathcal{V}),s\models\phi''$, by definition.
						Let us proceed by disjunctive case analysis:
						\begin{itemize}
							\item Suppose that $(\aModalFrame,\mathcal{V}),s\models\phi'$.
									Hence $(\aModalFrame,\mathcal{V}),s'\models\phi'$, by induction hypothesis.
									Hence 
										$(\aModalFrame,\mathcal{V}),s'\models\phi'$ or 
										$(\aModalFrame,\mathcal{V}),s'\models\phi''$, by 
											meta-level (classical) propositional logic.
									Thus  
										$(\aModalFrame,\mathcal{V}),s'\models\phi'\lor\phi''$, by definition.
							\item Suppose that $(\aModalFrame,\mathcal{V}),s\models\phi''$, and
									proceed symmetrically to the previous case.
						\end{itemize}
				\item $\phi\defeq \phi'\land\phi''$ for arbitrary $\phi',\phi''\in\pFormulas$.
						For the sake of the induction, suppose 
							that 
								if $s\sqsubseteq s'$ and $(\aModalFrame,\mathcal{V}),s\models\phi'$
								then $(\aModalFrame,\mathcal{V}),s'\models\phi'$ and 
							that 
								if $s\sqsubseteq s'$ and $(\aModalFrame,\mathcal{V}),s\models\phi''$
								then $(\aModalFrame,\mathcal{V}),s'\models\phi''$.
						Further suppose that 
							$s\sqsubseteq s'$ and $(\aModalFrame,\mathcal{V}),s\models\phi'\land\phi''$.
						Thus 
							$(\aModalFrame,\mathcal{V}),s\models\phi'$ and 
							$(\aModalFrame,\mathcal{V}),s\models\phi''$, by definition.
						Hence
							$(\aModalFrame,\mathcal{V}),s'\models\phi'$ and 
							$(\aModalFrame,\mathcal{V}),s'\models\phi''$, by the induction hypotheses.
						Thus
							$(\aModalFrame,\mathcal{V}),s'\models\phi'\land\phi''$, by definition.
				\item $\phi\defeq \neg\phi'$ for an arbitrary $\phi'\in\pFormulas$.
						For the sake of the induction, suppose 
							that 
								if $s\sqsubseteq s'$ and $(\aModalFrame,\mathcal{V}),s\models\phi'$
								then $(\aModalFrame,\mathcal{V}),s'\models\phi'$.
						Further suppose that 
							$s\sqsubseteq s'$ and $(\aModalFrame,\mathcal{V}),s\models\neg\phi'$.
						Thus 
							for all $s''\in\states$,
								if $s\sqsubseteq s''$ then not $(\aModalFrame,\mathcal{V}),s''\models\phi'$,
									by definition.
						Now, 
							let $s''\in\states$ and
							further suppose that $s'\sqsubseteq s''$.
						Hence $s\sqsubseteq s''$, by the transitivity of $\sqsubseteq$.
						Hence not $(\aModalFrame,\mathcal{V}),s''\models\phi'$.
						Hence for all $s''\in\states$,
							if $s'\sqsubseteq s''$ then not $(\aModalFrame,\mathcal{V}),s''\models\phi'$.
						Thus $(\aModalFrame,\mathcal{V}),s'\models\neg\phi'$, by definition.
						(Observe that 
							the induction hypothesis turns out to be irrelevant for this case.)
				\item $\phi\defeq \phi'\limp\phi''$ for arbitrary $\phi',\phi''\in\pFormulas$.
						For the sake of the induction, suppose 
							that 
								if $s\sqsubseteq s'$ and $(\aModalFrame,\mathcal{V}),s\models\phi'$
								then $(\aModalFrame,\mathcal{V}),s'\models\phi'$ and 
							that 
								if $s\sqsubseteq s'$ and $(\aModalFrame,\mathcal{V}),s\models\phi''$
								then $(\aModalFrame,\mathcal{V}),s'\models\phi''$.
						Further suppose that 
							$s\sqsubseteq s'$ and $(\aModalFrame,\mathcal{V}),s\models\phi'\limp\phi''$.
						Thus 
							for all $s''\in\states$,
								if $s\sqsubseteq s''$ then 
									(not $(\aModalFrame,\mathcal{V}),s''\models\phi'$ or
									$(\aModalFrame,\mathcal{V}),s''\models\phi''$), by definition.
						Now, 
							let $s''\in\states$ and
							further suppose that $s'\sqsubseteq s''$.
						Hence $s\sqsubseteq s''$, by the transitivity of $\sqsubseteq$.
						Hence, 
							not $(\aModalFrame,\mathcal{V}),s''\models\phi'$ or
							$(\aModalFrame,\mathcal{V}),s''\models\phi''$.
						Hence for all $s''\in\states$,
							if $s'\sqsubseteq s''$ 
							then (not $(\aModalFrame,\mathcal{V}),s''\models\phi'$ or
										$(\aModalFrame,\mathcal{V}),s''\models\phi''$).
						Thus $(\aModalFrame,\mathcal{V}),s'\models\phi'\limp\phi''$, by definition.
						(Observe that 
							the induction hypotheses turn out to be irrelevant for this case.)
				\item $\phi\defeq \Iproves{M}{\phi'}{\CM}{}$ for an arbitrary $\phi'\in\pFormulas$.
						For the sake of the induction, suppose 
							that 
								if $s\sqsubseteq s'$ and $(\aModalFrame,\mathcal{V}),s\models\phi'$
								then $(\aModalFrame,\mathcal{V}),s'\models\phi'$.
						Further suppose that 
							$s\sqsubseteq s'$ and 
							$(\aModalFrame,\mathcal{V}),s\models\Iproves{M}{\phi'}{\CM}{}$.
						Thus on the one hand, 
							$s\access{\CM}{\CM}{}s'$, by MIAR-inclusion, and
						also, on the other hand, 
							for all $s''\in\states$,
								if $s\access{M}{\CM}{}s''$ 
								then $(\aModalFrame,\mathcal{V}),s''\models\phi'$,
									by definition.
						Now, 
							let $s''\in\states$ and
							further suppose that $s'\access{M}{\CM}{}s''$.
						Hence $s\access{M}{\CM}{}s''$, by special transitivity.
						Hence $(\aModalFrame,\mathcal{V}),s''\models\phi'$.
						(Observe that 
							the induction hypothesis turns out to be irrelevant for this case.)				
			\end{itemize}
	\end{itemize}
\end{proof}
\noindent
Observe that 
	the induction hypothesis in the proof turns out to be irrelevant for 
		all negative connectives and the modality, which
			are made to conserve the monotonicity of atomic propositions 
				(\cf Page~\pageref{page:Monotonicity}).

\begin{proposition}[Concrete LIiP-models]\label{proposition:InputHistoryModel}\ 
	Let $\mathfrak{M}$ designate an arbitrary concrete LIiP-model, \ie a model with
		ingredients like in Definition~\ref{definition:SemanticIngredients}, and 
	let $\phi\in\pFormulas$.
	
	Then, 
		$$\text{$\mathfrak{M},\mathtt{0}\models\phi$ if and only if $\mathfrak{M}\models\phi$\,.}$$
\end{proposition}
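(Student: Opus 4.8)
The plan is to prove the two directions separately, with essentially all the work in the forward one. The direction $\mathfrak{M}\models\phi\Rightarrow\mathfrak{M},\mathtt{0}\models\phi$ is immediate from Definition~\ref{definition:TruthValidity}: if $\phi$ is globally true in $\mathfrak{M}$ then it holds at \emph{every} state, in particular at $\mathtt{0}\in\states$.

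For the converse, the key observation I would make is that $\mathtt{0}$ is the least element of the partial order $(\states,{\sqsubseteq})$, where ${\sqsubseteq}={\sqsubseteq_{\CM}}$. Concretely, I would check that $\mathtt{0}\sqsubseteq s$ for every $s\in\states$: by the definition of $\sqsubseteq_{\CM}$ this requires some $s''\in\states$ with $\pi_{\CM}(\mathtt{0})\star\pi_{\CM}(s'')=\pi_{\CM}(s)$, and the witness $s''\defeq s$ works, since $\pi_{\CM}(\mathtt{0})=\mathtt{0}$ by the defining clause of state projection and $\mathtt{0}$ is the neutral element of the monoidal concatenation $\star$, so $\mathtt{0}\star\pi_{\CM}(s)=\pi_{\CM}(s)$.

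Next I would invoke the Monotonicity Lemma. To do so I first note that the concrete model $\mathfrak{M}$ is a genuine LIiP-model in the sense of Definition~\ref{definition:KripkeModel}: its concrete accessibility $\pAccess{M}{\CM}{}$ realises all the stipulations of the abstract accessibility $\access{M}{\CM}{}$ by Proposition~\ref{proposition:ConcreteAccessibility}, and its partially predefined valuation meets the $\sqsubseteq$-monotonicity constraint on atoms. Hence the Monotonicity Lemma applies to $\mathfrak{M}$. Assuming $\mathfrak{M},\mathtt{0}\models\phi$, I take an arbitrary $s\in\states$; by the previous paragraph $\mathtt{0}\sqsubseteq s$, so $\mathfrak{M},s\models\phi$ by the Monotonicity Lemma. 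Since $s$ was arbitrary, $\mathfrak{M}\models\phi$ by Definition~\ref{definition:TruthValidity}, completing the proof.

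I do not expect a genuine obstacle here: the proposition is a short corollary of the Monotonicity Lemma once one recognises that the empty input history $\mathtt{0}$ sits at the bottom of the intuitionistic accessibility order $\sqsubseteq$. The only item demanding a moment of care is the bookkeeping that a concrete model satisfies the abstract model conditions so that the Monotonicity Lemma is indeed available — but this is exactly what Proposition~\ref{proposition:ConcreteAccessibility} was established for.
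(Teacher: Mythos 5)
Your proof is correct and follows essentially the same route as the paper's: one direction is immediate from the definition of global satisfaction, and the other combines the observation that $\states=\setst{s\in\states}{\mathtt{0}\sqsubseteq s}$ with the Monotonicity Lemma. You merely spell out two details the paper leaves implicit---the verification that $\mathtt{0}$ is the $\sqsubseteq$-least state via $\pi_{\CM}(\mathtt{0})=\mathtt{0}$ and the neutrality of $\mathtt{0}$ for $\star$, and the appeal to Proposition~\ref{proposition:ConcreteAccessibility} to license applying the Monotonicity Lemma to a concrete model---both of which are sound.
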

\begin{proof}
	The only-if direction follows from the definition of global satisfaction (\cf Definition~\ref{definition:TruthValidity}). 
	For the if-direction,
		consider that the concrete state space $\states=\setst{s\in\states}{\mathtt{0}\sqsubseteq s}$ and
		apply the antecedent Monotonicity Lemma.
\end{proof}

\begin{proposition}[Admissibility of LIiP-specific axioms and rules]\label{proposition:AxiomAndRuleAdmissibility}\ 
	\begin{enumerate}
		\item $\models\knows{a}{a}$
		\item $\models(\knows{a}{M}\land\knows{a}{M'})\lequiv\knows{a}{\pair{M}{M'}}$
		\item $\models\Iproves{M}{\knows{\CM}{M}}{\CM}{}$
		\item $\models(\Iproves{M}{(\phi\limp\phi')}{\CM}{})\limp
				((\Iproves{M}{\phi}{\CM}{})\limp\Iproves{M}{\phi'}{\CM}{})$
		\item $\models(\Iproves{M}{\phi}{\CM}{})\limp(\knows{\CM}{M}\limp\phi)$
		\item $\models(\Iproves{M}{\phi}{\CM}{})\limp(\Iproofdiamond{M}{\phi}{\CM}{})$
		\item $\models\phi\limp\Iproves{M}{\phi}{\CM}{}$
		\item If $\models\phi$ then $\models\Iproves{M}{\phi}{\CM}{}$
		\item If $\models\knows{\CM}{M}\limp\knows{\CM}{M'}$ then 
					$\models(\Iproves{M'}{\phi}{\CM}{})\limp\Iproves{M}{\phi}{\CM}{}$.
	\end{enumerate}
\end{proposition}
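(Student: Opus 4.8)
The plan is to verify each of the nine clauses directly against the satisfaction relation of Table~\ref{table:SatisfactionRelation} and against the semantic interface for $\access{M}{\CM}{}$ fixed in Definition~\ref{definition:KripkeModel}, using the Monotonicity Lemma as the uniform device for discharging the intuitionistic connectives. Concretely, to establish a validity $\models\psi\limp\psi'$ it suffices to take an arbitrary model $(\aModalFrame,\mathcal{V})$, an arbitrary $s\in\states$ and an arbitrary $s'\in\states$ with $s\sqsubseteq s'$ and $(\aModalFrame,\mathcal{V}),s'\models\psi$, and to show $(\aModalFrame,\mathcal{V}),s'\models\psi'$; whenever the argument needs a box-subformula of $\psi$ to be available at a state above $s'$, the Monotonicity Lemma supplies it. Note that every model (concrete or abstract) carries the same state space $\states$ of input histories and the same predefined valuation of the atoms $\knows{a}{M}$, so clauses 1--3 reduce to statements about the data-mining closure $\clo{a}{\cdot}$.

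First the atomic and structural clauses. Clause 1 holds because $a$ always belongs to $\clo{a}{s}(\emptyset)$, by the base case $\clo{a}{0}(\cdot)=\set{a}\cup(\cdot)$ of the data-mining closure, so $\mathcal{V}(\knows{a}{a})=\states$. Clause 2 holds because the pairing and unpairing conditions in the definition of $\clo{a}{\cdot}$ give $\set{M,M'}\subseteq\clo{a}{s}(\emptyset)$ iff $\pair{M}{M'}\in\clo{a}{s}(\emptyset)$, so at every state the two sides of the biconditional are satisfied together. Clause 3 is immediate from the \emph{epistemic image} clause of the semantic interface: if $s\access{M}{\CM}{}s'$ then $M\in\clo{\CM}{s'}(\emptyset)$, i.e.\ $(\aModalFrame,\mathcal{V}),s'\models\knows{\CM}{M}$. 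Clause 8 (necessitation) is the standard argument: if $\models\phi$ then $\phi$ holds at every state of every model, in particular at every $\access{M}{\CM}{}$-successor of any $s$, whence $s\models\Iproves{M}{\phi}{\CM}{}$.

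Next the four genuinely modal axioms. For clause 4 (K), given $s'\sqsupseteq s$ with $s'\models\Iproves{M}{(\phi\limp\phi')}{\CM}{}$, then $s''\sqsupseteq s'$ with $s''\models\Iproves{M}{\phi}{\CM}{}$, then $t$ with $s''\access{M}{\CM}{}t$: the Monotonicity Lemma lifts the first box-formula to $s''$, so $t\models\phi\limp\phi'$ and $t\models\phi$, and reflexivity of $\sqsubseteq$ (instantiating the implication at $t$ itself) yields $t\models\phi'$. For clause 5 (ET), after moving $\Iproves{M}{\phi}{\CM}{}$ up to the state $s''$ where $\knows{\CM}{M}$ holds, the \emph{conditional reflexivity} clause gives $s''\access{M}{\CM}{}s''$, hence $s''\models\phi$. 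For clause 7 (MM), given $s'\sqsupseteq s$ with $s'\models\phi$ and $t$ with $s'\access{M}{\CM}{}t$, the \emph{MIAR-inclusion} clause gives $s'\sqsubseteq t$ and the Monotonicity Lemma then gives $t\models\phi$. For clause 6 (ID), unfolding $\Iproofdiamond{M}{\phi}{\CM}{}=\neg\neg(\knows{\CM}{M}\land\phi)$, one must produce, above any $s''$ in the relevant upset of a state satisfying $\Iproves{M}{\phi}{\CM}{}$, a witness for $\knows{\CM}{M}\land\phi$: \emph{seriality} yields some $s''\access{M}{\CM}{}t$, \emph{MIAR-inclusion} gives $s''\sqsubseteq t$, \emph{epistemic image} gives $t\models\knows{\CM}{M}$, and $t\models\phi$ comes from the box-formula (after a monotonicity lift); so $t$ is the desired witness.

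The remaining clause 9 (epistemic antitonicity) is where the work lies, because it converts a semantic hypothesis about the atoms $\knows{\CM}{M},\knows{\CM}{M'}$ into the \emph{proof monotonicity} hypothesis of the semantic interface. The key observation is that $\models\knows{\CM}{M}\limp\knows{\CM}{M'}$, specialised via reflexivity of $\sqsubseteq$ to singleton upsets and using that every model ranges over the same state space $\states$ with the same predefined valuation, says precisely that for all $s\in\states$, $M\in\clo{\CM}{s}(\emptyset)$ implies $M'\in\clo{\CM}{s}(\emptyset)$, i.e.\ $M\sqsubseteq_{\CM}M'$ in the sense of Definition~\ref{definition:MessagePreordering}. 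The \emph{proof monotonicity} clause of the semantic interface then gives ${\access{M}{\CM}{}}\subseteq{\access{M'}{\CM}{}}$, so any state validating $\Iproves{M'}{\phi}{\CM}{}$ also validates $\Iproves{M}{\phi}{\CM}{}$; applied throughout the upset of an arbitrary $s$ this yields $\models(\Iproves{M'}{\phi}{\CM}{})\limp\Iproves{M}{\phi}{\CM}{}$. The main obstacle, and the step to state carefully, is exactly this reduction of the global validity $\models\knows{\CM}{M}\limp\knows{\CM}{M'}$ to the purely term-theoretic relation $M\sqsubseteq_{\CM}M'$; once that is in hand, each clause is a short unfolding of Table~\ref{table:SatisfactionRelation} plus one application of the Monotonicity Lemma and one clause of the semantic interface.
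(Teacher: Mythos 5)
Your proof is correct and follows essentially the same route as the paper's: each clause is discharged by unfolding Table~\ref{table:SatisfactionRelation} and invoking the matching clause of the semantic interface together with the Monotonicity Lemma, exactly as the paper does for its two worked-out cases (ID and MM). The only difference is one of detail: you spell out the reduction of $\models\knows{\CM}{M}\limp\knows{\CM}{M'}$ to $M\sqsubseteq_{\CM}M'$ via the predefined valuation, a step the paper compresses into the phrase ``from the proof-monotonicity property,'' and this added care is welcome rather than a deviation.
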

\begin{proof}
	1 and 2 are immediate;
	4 and 8 hold by the fact that LIiP has a standard Kripke-semantics; 
	3 follows directly from the epistemic-image property of $\access{M}{\CM}{}$,
	5 from the conditional reflexivity of $\access{M}{\CM}{}$, and 
	9 from the proof-monotonicity property of $\access{M}{\CM}{}$.
	6 follows from 
		the seriality of $\access{M}{\CM}{}$ and  
		the MIAR-inclusion and 
		the epistemic-image property of $\access{M}{\CM}{}$ as follows:
		Let $(\aModalFrame,\mathcal{V})$ designate an arbitrary LIiP-model and let $s\in\states$.
		Further let $s'\in\states$ and suppose that $s\sqsubseteq s'$.
		Now suppose that $(\aModalFrame,\mathcal{V}),s'\models\Iproves{M}{\phi}{\CM}{}$.
		Further let $s''\in\states$ and suppose that $s'\sqsubseteq s''$.
		Hence $(\aModalFrame,\mathcal{V}),s''\models\Iproves{M}{\phi}{\CM}{}$ by the Monotonicity Lemma.
		That is,
			for all $s'''\in\states$,
				if $s''\access{M}{\CM}{}s'''$ then $(\aModalFrame,\mathcal{V}),s'''\models\phi$.	
		But by the seriality of $\access{M}{\CM}{}$, 
			there is indeed an $s'''\in\states$ such that $s''\access{M}{\CM}{}s'''$.
		Hence, 
			$(\aModalFrame,\mathcal{V}),s'''\models\phi$, and also   
			$s''\sqsubseteq s'''$ by the MIAR-inclusion property, and yet also 
			$M\in\clo{\CM}{s'''}(\emptyset)$ by the epistemic-image property.
		Thus $(\aModalFrame,\mathcal{V}),s'''\models\knows{\CM}{M}\land\phi$.
		Hence $(\aModalFrame,\mathcal{V}),s'\models\neg\neg(\knows{\CM}{M}\land\phi)$ and 
		thus $(\aModalFrame,\mathcal{V}),s\models(\Iproves{M}{\phi}{\CM}{})\limp\neg\neg(\knows{\CM}{M}\land\phi)$.	
	Finally, 
		7 follows from 
			the MIAR-inclusion property of $\access{M}{\CM}{}$ and 
			the Monotonicity Lemma (which in turn holds thanks to 
				the special transitivity and the MIAR-inclusion property of $\access{M}{\CM}{}$) as follows: 
		Let $(\aModalFrame,\mathcal{V})$ designate an arbitrary LIiP-model and let $s\in\states$.
		Further let $s'\in\states$ and suppose that $s\sqsubseteq s'$.
		Now suppose that $(\aModalFrame,\mathcal{V}),s'\models\phi$.
		Additionally, let $s''\in\states$ and suppose that $s'\access{M}{\CM}{}s''$.
		Hence $s'\sqsubseteq s''$ by MIAR-inclusion.
		Hence $(\aModalFrame,\mathcal{V}),s''\models\phi$ by the Monotonicity Lemma.
		Thus,  
			$(\aModalFrame,\mathcal{V}),s'\models\Iproves{M}{\phi}{\CM}{}$, and then
			$(\aModalFrame,\mathcal{V}),s\models\phi\limp\Iproves{M}{\phi}{\CM}{}$.
\end{proof}

\begin{theorem}[Axiomatic adequacy]\label{theorem:Adequacy}\ 
	$\LIiPded$ is \emph{adequate} for $\models$, \ie:
	\begin{enumerate}
		\item if $\LIiPded\phi$ then $\models\phi$\quad(axiomatic soundness)
		\item if $\models\phi$ then $\LIiPded\phi$\quad(semantic completeness).
	\end{enumerate}
\end{theorem}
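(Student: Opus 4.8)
The plan is to prove part~(1) by a routine induction and part~(2) by a canonical-model construction over prime LIiP-theories.

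\textit{Soundness.} This is essentially already in hand. By Proposition~\ref{proposition:AxiomAndRuleAdmissibility} every axiom schema in $\Gamma_{1}$ is valid, the epistemic-antitonicity rule and the necessitation rule (a special case, as recorded there and in Theorem~\ref{theorem:SomeUsefulDeducibleLogicalLaws}.0) preserve validity, and \emph{modus ponens} preserves validity by classical meta-reasoning. Hence a straightforward induction on $n$ with $\LIiP=\bigcup_{n}\Clo{}{n}(\emptyset)$ yields ``$\LIiPded\phi$ implies $\models\phi$''. For the epistemic-antitonicity step one uses that in the base theory the premise $\knows{\CM}{M}\limp\knows{\CM}{M'}$ is itself a theorem of $\Clo{}{n}(\emptyset)$, hence valid by the induction hypothesis, so the conclusion is valid by Proposition~\ref{proposition:AxiomAndRuleAdmissibility}.9.

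\textit{Completeness.} I would argue contrapositively (the meta-logic being classical): from $\phi\notin\LIiP$ exhibit a model refuting $\phi$. Build the canonical model $\canModel$ whose worlds are the \emph{prime LIiP-theories} $\Gamma$ (closed under $\LIiPded$, consistent, with the disjunction property), with $\sqsubseteq$ read as set inclusion, $\canVal(P)\defeq\setst{\Gamma}{P\in\Gamma}$, the predefined data-mining content stipulated by ``$M\in\clo{\CM}{\Gamma}(\emptyset)$ :iff $\knows{\CM}{M}\in\Gamma$'' (legitimate, since $\setst{M}{\knows{\CM}{M}\in\Gamma}$ contains $\CM$ and is closed under (un)pairing by the relevant axioms), and canonical proof accessibility $\Gamma\canrel{M}{\CM}{}\Delta$ :iff $\setst{\psi}{\Iproves{M}{\psi}{\CM}{}\in\Gamma}\subseteq\Delta$. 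The workhorse is the \emph{Lindenbaum lemma for IL}: if $\Gamma\not\vdash_{\LIiP}\psi$ then $\Gamma$ extends to a prime theory avoiding $\psi$; this is available because the propositional base $\Gamma_{0}$ is intuitionistic, so $\lor$-elimination and the deduction theorem hold in $\LIiPded$. One then proves the \emph{Truth Lemma} $\canModel,\Gamma\models\chi$ iff $\chi\in\Gamma$ by induction on $\chi$: atoms by definition of $\canVal$; $\land,\lor,\neg,\limp$ by the standard IL argument (primeness for $\lor$; Lindenbaum together with $\sqsubseteq={\subseteq}$ for $\neg,\limp$); and for $\Iproves{M}{\chi'}{\CM}{}$ the easy direction is immediate from the definition of $\canrel{M}{\CM}{}$, while for the hard direction, given $\Iproves{M}{\chi'}{\CM}{}\notin\Gamma$ one shows $\setst{\psi}{\Iproves{M}{\psi}{\CM}{}\in\Gamma}\not\vdash_{\LIiP}\chi'$ using compactness of $\Clo{}{}$ (Proposition~\ref{proposition:Hilbert}) together with the proof-conjunction law (Theorem~\ref{theorem:SomeUsefulDeducibleLogicalLaws}.33) and normality, and then extends by Lindenbaum to a prime $\Delta$ with $\Gamma\canrel{M}{\CM}{}\Delta$ and $\chi'\notin\Delta$. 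Finally one checks that $\canModel$ is an LIiP-model, i.e.\ that $\canrel{M}{\CM}{}$ validates all six interface clauses of Definition~\ref{definition:KripkeModel}: epistemic image from the self-knowledge axiom; conditional reflexivity from ET; seriality from consistency of the relevant $\Box$-set (compactness, proof conjunctions, and ID); ${\canrel{M}{\CM}{}}\subseteq{\canrel{\CM}{\CM}{}}={\sqsubseteq}$ and special transitivity from MM and TMM (Theorem~\ref{theorem:SomeUsefulDeducibleLogicalLaws}.8) combined with epistemic antitonicity applied to $\knows{\CM}{M}\limp\knows{\CM}{\CM}$; and proof monotonicity from epistemic antitonicity again, where one first notes, via Lindenbaum on the atomic implication $\knows{\CM}{M}\limp\knows{\CM}{M'}$, that the canonical reading of $M\sqsubseteq_{\CM}M'$ coincides with its LIiP-provability. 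Then $\phi\notin\LIiP$ gives a prime $\Gamma$ with $\phi\notin\Gamma$, whence $\canModel,\Gamma\not\models\phi$ and so $\not\models\phi$.

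\textit{Expected obstacle.} Two points need real care. The first is the existence half of the modal Truth Lemma together with the seriality clause: both reduce to showing that a certain $\Box$-set is consistent, and this is precisely where the internalised proof-conjunction law and (for seriality) ID are indispensable and where compactness of the LIiP consequence operator must be invoked correctly. The second, more delicate, point is reconciling the construction with the \emph{partially predefined} valuation of Definition~\ref{definition:KripkeModel}: the stipulation of $\clo{\CM}{\Gamma}(\emptyset)$ must make $\canVal(\knows{\CM}{M})$ come out as prescribed and must give the message pre-order $\sqsubseteq_{\CM}$ its intended meaning; and if one insists that LIiP-frames carry the input-history state space itself, an additional step is needed — realising the canonical model as, or simulating it by, a concrete input-history model in the spirit of Proposition~\ref{proposition:InputHistoryModel} — and arranging that simulation so that the concrete data-mining closure matches the canonical ``$\knows$-content'' is the genuinely subtle part.
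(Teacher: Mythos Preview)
Your proposal is correct and follows essentially the same route as the paper: soundness from Proposition~\ref{proposition:AxiomAndRuleAdmissibility}, and completeness via the canonical model over prime LIiP-consistent sets with $\canrel{M}{\CM}{}$ defined exactly as you do, followed by the Truth Lemma and verification of the six interface clauses from the same respective axioms (self-knowledge for epistemic image, ET for conditional reflexivity, ID for seriality, TMM and MM for MIAR-inclusion and special transitivity, EA for proof monotonicity). Your second ``expected obstacle'' about realising the canonical model on the concrete input-history state space turns out to be a non-issue: the paper works at the abstract-interface level and simply instantiates $\msgs{a}$ on prime theories by $\msgs{a}(w)\defeq\setst{M}{\knows{a}{M}\in w}$, just as you propose.
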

\begin{proof}
	Both parts can be proved with standard means:
	axiomatic soundness follows from 
		the admissibility of the axioms and rules 
			(\cf Propostion~\ref{proposition:AxiomAndRuleAdmissibility}) as usual, and 
	semantic completeness follows by means of a construction of canonical models that 
		is appropriate for intuitionistic normal modal logic as follows.
	
	Let
		\begin{itemize}
			\item $\mathcal{W}$ designate the set of all prime LIiP-consistent sets\footnote{*
				A set $W$ of LIiP-formulas is prime LIiP-consistent :iff 
					$W$ is LIiP-consistent and 
					$W$ is prime.
				A set $W$ of LIiP-formulas is LIiP-consistent :iff 
					$W$ is not LIiP-inconsistent.
				A set $W$ of LIiP-formulas is LIiP-inconsistent :iff 
					there is a finite $W'\subseteq W$ such that $((\bigwedge W')\limp\false)\in\LIiP$.
				A set $W$ of LIiP-formulas is prime :iff 
						first, $W$ is deductively closed, that is, 
							there is a finite $W'\subseteq W$ such that 
								for all $\phi\in\pFormulas$, 
									if $((\bigwedge W')\limp\phi)\in\LIiP$ then $\phi\in W$, and
						second, $W$ has the disjunction property, that is,
							for all $\phi,\phi'\in\pFormulas$,
								if $\phi\lor\phi'\in W$
								then $\phi\in W$ or $\phi'\in W$.
				Similar to a classical Lindenbaum construction 
					(extending consistent sets to \emph{maximal} consistent sets)  
						any LIiP-consistent set can be extended to 
							a \emph{prime} LIiP-consistent set.}
			\item $w\sqsubseteq w'$ :iff $w\subseteq w'$
			\item for all $w,w'\in\mathcal{W}$,
				$w\canrel{M}{\CM}{}w'$ :iff $\setst{\phi\in\pFormulas}{\Iproves{M}{\phi}{\CM}{}\in w}\subseteq w'$
			\item for all $w\in\mathcal{W}$, $w\in\canVal(P)$ :iff $P\in w$.
		\end{itemize}
	Then 
		$\canModel\defeq
			(\mathcal{W},\sqsubseteq,\set{\canrel{M}{\CM}{}}_{M\in\messages},\canVal)$
		designates the \emph{canonical model} for LIiP.
	
	Following standard practice common to all intuitionistic normal modal logics, 
		the following useful property of $\canModel$, 
			the so-called \emph{Truth Lemma,} 
			$$\text{for all $\phi\in\pFormulas$ and $w\in\mathcal{W}$,
				$\phi\in w$ if and only if $\canModel,w\models\phi$}$$
		can be proved by induction on the structure of $\phi$.
	With this lemma, 
		it can then be proved that 
			for all $\phi\in\pFormulas$,
				if $\not\LIiPded\phi$ then $\not\models\phi$.
	Let 
		$\phi\in\pFormulas$, and 
	suppose that 
		$\not\LIiPded\phi$.
	Thus, 
		$\set{\neg\phi}$ 
			is LIiP-consistent, and 
			can be extended to a prime LIiP-consistent set $w$, \ie
				$\neg\phi\in w\in\mathcal{W}$.
	Hence 
		$\canModel,w\models\neg\phi$, by the Truth Lemma.
	Thus: 
		$\canModel,w\not\models\phi$,
		$\canModel\not\models\phi$, and
		$\not\models\phi$.
	That is,
			$\canModel$ is a 
				\emph{universal} (for \emph{all} $\phi\in\pFormulas$) 
				\emph{counter-model} (if $\phi$ is a non-theorem then $\canModel$ falsifies $\phi$).
	
	The only proof obligation specific to the semantic-completeness proof for LIiP is to prove that 
		$\canModel$ is also an \emph{LIiP-model}. 
	So let us instantiate our data-mining operator $\clo{a}{}$ (\cf Page~\pageref{page:DataMining}) on $\mathcal{W}$  
		by letting for all $w\in\mathcal{W}$
			$$\msgs{a}(w)\defeq\setst{M}{\knows{a}{M}\in w},$$ and let us prove that:
					\begin{enumerate}
				\item If $w\canrel{M}{\CM}{}w'$ then $M\in\clo{\CM}{w'}(\emptyset)$
				\item If $M\in\clo{\CM}{w}(\emptyset)$ then 
						$w\canrel{M}{\CM}{}w$
				\item there is $w'\in\mathcal{W}$ such that $w\canrel{M}{\CM}{}w'$
				\item ${\canrel{M}{\CM}{}}\subseteq{\canrel{\CM}{\CM}{}}={\sqsubseteq}$
				\item $({\canrel{\CM}{\CM}{}}\circ{\canrel{M}{\CM}{}})\subseteq{\canrel{M}{\CM}{}}$
				\item If $M\sqsubseteq_{\CM}M'$ 
						then ${\canrel{M}{\CM}{}}\subseteq{\canrel{M'}{\CM}{}}$
					\end{enumerate} 
	For (1),
		let $w,w'\in\mathcal{W}$ and 
		suppose that $w\canrel{M}{\CM}{}w'$.
	That is,	
		for all $\phi\in\pFormulas$, 
			if $\Iproves{M}{\phi}{\CM}{}\in w$
			then $\phi\in w'$.
	Since $w$ is deductively closed,
		$$\text{$\Iproves{M}{\knows{\CM}{M}}{\CM}{}\in w$\quad(self-knowledge).}$$
	Hence 
		$\knows{\CM}{M}\in w'$.
	Thus 
		$M\in\clo{\CM}{w'}(\emptyset)$ by the definition of $\clo{\CM}{w'}$.

	For (2), 
		let $w\in\mathcal{W}$ and 
		suppose that 
			$M\in\clo{\CM}{w}(\emptyset)$.
	Hence $\knows{\CM}{M}\in w$ due to the deductive closure of $w$.
	Further suppose that $\Iproves{M}{\phi}{\CM}{}\in w$.
	Since $w$ is deductively closed,
		$$\text{$(\Iproves{M}{\phi}{\CM}{})\limp(\knows{\CM}{M}\limp\phi)\in w$\quad(ET).}$$
	Hence, 
		$\knows{\CM}{M}\limp\phi\in w$, and
		$\phi\in w$, by consecutive  \emph{modus ponens.}

	For (3),
		let $w\in\mathcal{W}$ and $\phi\in\pFormulas$, and
		suppose that $\Iproves{M}{\phi}{\CM}{}\in w$.
	Since $w$ is deductively closed,
		$$\text{$(\Iproves{M}{\phi}{\CM}{})\limp\Iproofdiamond{M}{\phi}{\CM}{}\in w$\quad(ID).}$$
	Hence, $\Iproofdiamond{M}{\phi}{\CM}{}\in w$ by \emph{modus ponens.}
	That is, 
		$\neg\neg(\knows{\CM}{M}\land\phi)\in w$ by definition.
	Since $w$ is deductively closed,
		$\neg\neg(\knows{\CM}{M}\land\phi)\limp\neg\neg\phi\in w$.
	Hence 
		$\neg\neg\phi\in w$ by \emph{modus ponens.}
	Hence $\canModel, w\models\neg\neg\phi$ by the Truth Lemma.
	Hence for all $w'\in\mathcal{W}$,
		if $w\sqsubseteq w'$ then 
		there is $w''\in\mathcal{W}$ such that 
			$w'\sqsubseteq w''$ and 
			$\canModel, w''\models\phi$ by definition.	
	Hence $\phi\in w''$ by the Truth Lemma.
		
	For (4), 
		let us first prove that ${\canrel{\CM}{\CM}{}}={\sqsubseteq}$\,.
	So let $w,w'\in\mathcal{W}$ and 
	suppose that $w\canrel{\CM}{\CM}{}w'$.
	That is,	
		for all $\phi\in\pFormulas$, 
			if $\Iproves{\CM}{\phi}{\CM}{}\in w$
			then $\phi\in w'$.
	Further 
		let $\phi\in\pFormulas$ and 
		suppose that $\phi\in w$.
	Hence if $\Iproves{\CM}{\phi}{\CM}{}\in w$ then $\phi\in w'$.
	Since $w$ is deductively closed,
		$$(\Iproves{\CM}{\phi}{\CM}{})\lequiv\phi\in w\quad\text{(TMM)}$$
	Hence $\Iproves{\CM}{\phi}{\CM}{}\in w$ by LIiP-\emph{modus ponens}.
	Hence $\phi\in w'$ by meta-level \emph{modus ponens}.
	Thus $w\subseteq w'$, and then $w\sqsubseteq w'$.
	Conversely, suppose that $w\sqsubseteq w'$.
	That is, $w\subseteq w'$.
	Further 
		let $\phi\in\pFormulas$ and 
		suppose that $\Iproves{\CM}{\phi}{\CM}{}\in w$.
	Hence $\Iproves{\CM}{\phi}{\CM}{}\in w'$.
	Since $w'$ is deductively closed,
		$$(\Iproves{\CM}{\phi}{\CM}{})\lequiv\phi\in w'\quad\text{(TMM)}$$
	Hence $\phi\in w'$ by LIiP-\emph{modus ponens}.
	Second, let us prove that ${\canrel{M}{\CM}{}}\subseteq{\canrel{\CM}{\CM}{}}$\,.
	So let $w,w'\in\mathcal{W}$ and 
	suppose that $w\canrel{M}{\CM}{}w'$.
	That is,	
		for all $\phi\in\pFormulas$, 
			if $\Iproves{M}{\phi}{\CM}{}\in w$
			then $\phi\in w'$.
	Further 
		let $\phi\in\pFormulas$ and 
		suppose that $\Iproves{\CM}{\phi}{\CM}{}\in w$.
	Since $w$ is deductively closed,
		$$(\Iproves{\CM}{\phi}{\CM}{})\lequiv\phi\in w\quad\text{(TMM)}$$
	Hence $\phi\in w$ by LIiP-\emph{modus ponens}.
	Since $w$ is deductively closed,
		$$\phi\limp\Iproves{M}{\phi}{\CM}{}\in w\quad\text{(MM)}$$
	Hence $\Iproves{M}{\phi}{\CM}{}\in w$ by LIiP-\emph{modus ponens}.
	Hence $\phi\in w'$ by meta-level \emph{modus ponens}.
	
	For (5),
		let $w,w'\in\mathcal{W}$ and 
		suppose that $w\mathrel{(\canrel{\CM}{\CM}{}\circ\canrel{M}{\CM}{})}w'$.
	That is,	
		there is $w''\in\mathcal{W}$ such that
			$w\canrel{\CM}{\CM}{}w''$ and 
			$w''\canrel{M}{\CM}{}w'$.
	Thus, 
		(for all $\phi\in\pFormulas$, 
			if $\Iproves{\CM}{\phi}{\CM}{}\in w$
			then $\phi\in w''$) and 
		(for all $\phi\in\pFormulas$, 
			if $\Iproves{M}{\phi}{\CM}{}\in w''$
			then $\phi\in w'$).
	Further let $\phi\in\pFormulas$ and suppose that $\Iproves{M}{\phi}{\CM}{}\in w$.
	Since $w$ is deductively closed,
		$$(\Iproves{\CM}{(\Iproves{M}{\phi}{\CM}{})}{\CM}{})\lequiv
			\Iproves{M}{\phi}{\CM}{}\in w\quad\text{(TMM)}$$
	Hence $(\Iproves{\CM}{(\Iproves{M}{\phi}{\CM}{})}{\CM}{})\in w$.
	Hence $\Iproves{M}{\phi}{\CM}{}\in w''$ by the first hypothesis.
	Hence $\phi\in w'$ by the second hypothesis.
	Thus $w\canrel{M}{\CM}{}w'$.

	For (6),
		suppose that 
			$M\sqsubseteq_{\CM}M'$.
	That is,
		for all $w\in\mathcal{W}$, 
			if $M\in\clo{\CM}{w}(\emptyset)$ then $M'\in\clo{\CM}{w}(\emptyset)$.
	Hence for all $w\in\mathcal{W}$,
		if $\knows{\CM}{M}\in w$ then $\knows{\CM}{M'}\in w$
				due to the deductive closure of $w$, which 
					contains all the term axioms corresponding to the defining clauses of $\clo{\CM}{w}$.
	Hence for all $w\in\mathcal{W}$,
			if $\canModel, w\models\knows{\CM}{M}$ then $\canModel, w\models\knows{\CM}{M'}$, by the Truth Lemma.
	Hence also, 
		for all $w\in\mathcal{W}$, $\canModel, w\models\knows{\CM}{M}\limp\knows{\CM}{M'}$ by
			the definition of $\sqsubseteq$.
	Hence for all $w\in\mathcal{W}$, 
		$\knows{\CM}{M}\limp\knows{\CM}{M'}\in w$ by the Truth Lemma.
	Hence the following intermediate result, called IR, 
			$$\text{for all $w\in\mathcal{W}$ and $\phi\in\pFormulas$, 
			$(\Iproves{M'}{\phi}{\CM}{})\limp\Iproves{M}{\phi}{\CM}{}\in w$,}$$ by EA.
	Now, 
		let $w,w'\in\mathcal{W}$ and 
		suppose that $w\canrel{M}{\CM}{}w'$.
	That is,
		for all $\phi\in\pFormulas$,
			if $\Iproves{M}{\phi}{\CM}{}\in w$ 
			then $\phi\in w'$.
	Hence,  
		for all $\phi\in\pFormulas$,
					if $\Iproves{M'}{\phi}{\CM}{}\in w$
					then $\phi\in w'$ by IR. 
	Hence $w\canrel{M'}{\CM}{}w'$ by definition.
\end{proof}
			
\begin{theorem}[Finite-model property]\label{theorem:FiniteModelProperty}
	For any LIiP-model $\mathfrak{M}$,
	if $\mathfrak{M}, s\models\phi$ 
	then there is a finite LIiP-model $\mathfrak{M}_{\mathrm{fin}}$ such that 
		$\mathfrak{M}_{\mathrm{fin}}, s\models\phi$.
\end{theorem}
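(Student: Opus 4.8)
The plan is to obtain $\mathfrak{M}_{\mathrm{fin}}$ by a \emph{filtration} of the given model $\mathfrak{M}$ through a finite, carefully chosen set $\Sigma\subseteq\pFormulas$, following the standard recipe for intuitionistic normal modal logics but arranged so that the quotient again carries a \emph{partially predefined} valuation and a proof-accessibility relation of the \emph{concrete} shape of Fact~\ref{fact:identities}.2. I would let $\Sigma$ consist of all subformulas of $\phi$ together with every atom $\knows{a}{M'}$ where $a\in\agents$ and $M'$ is a subterm of a message occurring in $\phi$ (so in particular $\knows{\CM}{M}\in\Sigma$ for each modality subscript $M$ of $\phi$); since $\agents$ and the set of messages occurring in $\phi$ are finite, $\Sigma$ is finite. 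Putting $s\sim s'$ :iff $s$ and $s'$ satisfy the same formulas of $\Sigma$ in $\mathfrak{M}$ gives a finite-index equivalence, and I take $\states/{\sim}$ as the state space of $\mathfrak{M}_{\mathrm{fin}}$, with $[s]\sqsubseteq_{\mathrm{fin}}[s']$ :iff every $\Sigma$-formula true at $s$ is true at $s'$ --- a partial order (reflexive and transitive by inspection, antisymmetric since $[s]$ encodes the whole $\Sigma$-theory of $s$) which, by the Monotonicity Lemma, contains the image of $\sqsubseteq$. I would induce the finite data-mining operators from $\msgs{a}([s])\defeq\setst{M'}{M'\text{ a subterm of a message in }\phi\text{ and }\mathfrak{M},s\models\knows{a}{M'}}$ (well-defined because all these $\knows{a}{M'}$ lie in $\Sigma$), predefine $\mathcal{V}_{\mathrm{fin}}$ from them exactly as in Definition~\ref{definition:KripkeModel} and put it equal to $\setst{[s]}{\mathfrak{M},s\models P}$ on every other $P\in\Sigma$, and finally set, for all $M$, $[s]\access{M}{\CM}{\mathrm{fin}}[s']$ :iff $[s]\sqsubseteq_{\mathrm{fin}}[s']$ and $M\in\clo{\CM}{[s']}(\emptyset)$, in literal imitation of Fact~\ref{fact:identities}.2.

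The core of the argument is a \emph{Filtration Lemma}: for all $\psi\in\Sigma$ and $s\in\states$, $\mathfrak{M},s\models\psi$ iff $\mathfrak{M}_{\mathrm{fin}},[s]\models\psi$, which I would prove by induction on $\psi$. For the atoms $\knows{a}{M'}\in\Sigma$ I would first establish the coherence fact that, for every subterm $M'$ of a message in $\phi$, $M'\in\clo{a}{[s]}(\emptyset)$ iff $\mathfrak{M},s\models\knows{a}{M'}$: the ``if'' direction is immediate from the definition of $\msgs{a}$, and the ``only if'' direction is a closure induction using $\knows{a}{a}$, the (un)pairing axiom and the projection laws (Theorem~\ref{theorem:SomeUsefulDeducibleStructuralLaws}.1--2), all valid in the LIiP-model $\mathfrak{M}$; consequently the predefined $\mathcal{V}_{\mathrm{fin}}$ agrees with $\sim$ on these atoms. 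The propositional cases and the negation and implication cases are then the routine IL-filtration computations, using only that $\Sigma$ is subformula-closed, the reflexivity and transitivity of $\sqsubseteq_{\mathrm{fin}}$, and that $[s]\sqsubseteq_{\mathrm{fin}}[s']$ carries $\Sigma$-truths upward.

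The hard case --- and where the tailoring earns its keep --- is the modality $\Iproves{M}{\psi}{\CM}{}\in\Sigma$. For the forward direction, assume $\mathfrak{M},s\models\Iproves{M}{\psi}{\CM}{}$ and let $[s]\access{M}{\CM}{\mathrm{fin}}[s']$, \ie $[s]\sqsubseteq_{\mathrm{fin}}[s']$ and $M\in\clo{\CM}{[s']}(\emptyset)$: since $\Iproves{M}{\psi}{\CM}{}\in\Sigma$, upward $\sqsubseteq_{\mathrm{fin}}$-propagation (in $\mathfrak{M}$) gives $\mathfrak{M},s'\models\Iproves{M}{\psi}{\CM}{}$; since $\knows{\CM}{M}\in\Sigma$, the coherence fact gives $\mathfrak{M},s'\models\knows{\CM}{M}$; hence epistemic~T (axiom ET of Definition~\ref{definition:AxiomsRules}) yields $\mathfrak{M},s'\models\psi$, so $\mathfrak{M}_{\mathrm{fin}},[s']\models\psi$ by the induction hypothesis. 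For the converse, if $\mathfrak{M},s\not\models\Iproves{M}{\psi}{\CM}{}$, choose $s'$ with $s\access{M}{\CM}{}s'$ and $\mathfrak{M},s'\not\models\psi$; by the MIAR-inclusion property $s\sqsubseteq s'$, hence $[s]\sqsubseteq_{\mathrm{fin}}[s']$ by the Monotonicity Lemma, and by the epistemic-image property $M\in\clo{\CM}{s'}(\emptyset)$, \ie $\mathfrak{M},s'\models\knows{\CM}{M}$, whence $M\in\clo{\CM}{[s']}(\emptyset)$ by the coherence fact; so $[s]\access{M}{\CM}{\mathrm{fin}}[s']$ while $\mathfrak{M}_{\mathrm{fin}},[s']\not\models\psi$ by the induction hypothesis, and therefore $\mathfrak{M}_{\mathrm{fin}},[s]\not\models\Iproves{M}{\psi}{\CM}{}$. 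The whole point is that the failure of $[s]\sqsubseteq_{\mathrm{fin}}[s']$ to lift to $s\sqsubseteq s'$ inside $\mathfrak{M}$ is compensated by ET together with the concrete shape imposed on $\access{M}{\CM}{\mathrm{fin}}$.

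It then remains to check that $\mathfrak{M}_{\mathrm{fin}}$ really is an \emph{LIiP-model}. That $\mathcal{V}_{\mathrm{fin}}$ is $\sqsubseteq_{\mathrm{fin}}$-monotone is immediate for $P\in\Sigma$ from the definition of $\sqsubseteq_{\mathrm{fin}}$, and for the remaining $\knows{a}{M}$-atoms it follows because $[s]\sqsubseteq_{\mathrm{fin}}[s']$ forces $\msgs{a}([s])\subseteq\msgs{a}([s'])$ and hence $\clo{a}{[s]}(\emptyset)\subseteq\clo{a}{[s']}(\emptyset)$. For the six clauses of the semantic interface of Definition~\ref{definition:KripkeModel}, I would observe that $\access{M}{\CM}{\mathrm{fin}}$ has been defined in exactly the shape in which $\pAccess{M}{\CM}{}$ is built from $\sqsubseteq$ and $\clo{\CM}{\cdot}$ in Fact~\ref{fact:identities}.2, and that $\sqsubseteq_{\mathrm{fin}}$ is a partial order with $\CM\in\clo{\CM}{[s']}(\emptyset)$ always; the proofs of Proposition~\ref{proposition:ConcreteAccessibility}.1--6 then transfer essentially verbatim. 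Applying the Filtration Lemma to $\phi\in\Sigma$ at $s$ finally yields $\mathfrak{M}_{\mathrm{fin}},[s]\models\phi$, which is the claim (with $[s]$ read as the state ``$s$'' of $\mathfrak{M}_{\mathrm{fin}}$). I expect the two genuine obstacles to be exactly the choice of $\Sigma$ --- it must stay finite yet contain every atom that the predefined valuation and the ET / epistemic-image steps need to ``see'' --- and the modality case of the Filtration Lemma just sketched; everything else, including the model check, is a routine re-run of Proposition~\ref{proposition:ConcreteAccessibility}.
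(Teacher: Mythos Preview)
Your approach differs from the paper's: the paper takes the \emph{minimal filtration}, defining both $\sqsubseteq^{\mathrm{min},\Gamma}$ and $\access{M}{\CM}{\mathrm{min},\Gamma}$ as the \emph{images} of $\sqsubseteq$ and $\access{M}{\CM}{}$ under the quotient map, with $\Gamma$ just the subformula closure of $\phi$; you instead enlarge to $\Sigma$, define $\sqsubseteq_{\mathrm{fin}}$ as ``preserves all $\Sigma$-truths,'' and rebuild $\access{M}{\CM}{\mathrm{fin}}$ in the concrete shape of Fact~\ref{fact:identities}.2. Your Filtration Lemma --- in particular the modality case via ET and the epistemic-image property --- is correct and makes explicit what the paper leaves to standard filtration theory; that part is a genuine gain in transparency.

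There is, however, a real gap in your model check: \emph{seriality fails} for messages $M$ not derivable from your finite base. You claim that Proposition~\ref{proposition:ConcreteAccessibility}.1--6 ``transfer essentially verbatim,'' but the proof of Proposition~\ref{proposition:ConcreteAccessibility}.3 works by choosing $s'=\gscc{\CM}{M}(s)$, i.e.\ by \emph{extending the input history with a fresh input event carrying $M$}. No such move exists in your finite quotient. Concretely, let $M=b$ for an agent name $b\neq\CM$ not occurring in $\phi$ (or any message containing application-specific data absent from $\phi$): then $b\notin\clo{\CM}{[s']}(\emptyset)$ for every class $[s']$, because your $\msgs{\CM}([s'])$ contains only subterms of messages in $\phi$, and pairing/unpairing cannot manufacture $b$ out of those. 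Hence $\access{b}{\CM}{\mathrm{fin}}=\emptyset$, contradicting the seriality clause of the semantic interface in Definition~\ref{definition:KripkeModel}, so your $\mathfrak{M}_{\mathrm{fin}}$ is not an LIiP-model. The paper's image-relation filtration sidesteps this precisely because seriality (like the other interface clauses) is inherited directly from the original model.
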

\begin{proof}
	By the fact that 
		the \emph{minimal filtration} \cite{ModalModelTheory}
			$$\mathfrak{M}_{\mathrm{flt}}^{\mathrm{min},\Gamma}\defeq
				(\states/_{\sim_{\Gamma}},
					\sqsubseteq^{\mathrm{min},\Gamma},
					\set{\access{M}{\CM}{\mathrm{min},\Gamma}}_{M\in\messages},\mathcal{V}_{\Gamma})$$ of 
			any LIiP-model $\mathfrak{M}\defeq(\states,\sqsubseteq,\set{\access{M}{\CM}{}}_{M\in\messages},\mathcal{V})$ 
			through a finite $\Gamma\subseteq\pFormulas$ is a finite LIiP-model such that 
				for all $\gamma\in\Gamma$, 
					$\mathfrak{M},s\models\gamma$ if and only if 
					$\mathfrak{M}_{\mathrm{flt}}^{\mathrm{min},\Gamma},[s]_{\sim_{\Gamma}}\models\gamma$.
			Following \cite{ModalModelTheory} for our setting, 
				we define 
				\begin{eqnarray*}
					{\sim_{\Gamma}}&\defeq&
						\setst{(s,s')\in\states\times\states}{\text{for all $\gamma\in\Gamma$, 
							$\mathfrak{M},s\models\gamma$ iff $\mathfrak{M},s'\models\gamma$}}\\ 
					{\sqsubseteq^{\mathrm{min},\Gamma}} &\defeq&
						\setst{([s]_{\sim_{\Gamma}},[s']_{\sim_{\Gamma}})}{(s,s')\in{\sqsubseteq}}\\
					{\access{M}{\CM}{\mathrm{min},\Gamma}} &\defeq&
						\setst{([s]_{\sim_{\Gamma}},[s']_{\sim_{\Gamma}})}{(s,s')\in{\access{M}{\CM}{}}}\\
					\mathcal{V}_{\Gamma}(P)&\defeq&
							\setst{[s]_{\sim_{\Gamma}}}{s\in\mathcal{V}(P)}\,.
				\end{eqnarray*}
			We further fix 
				$M\in\clo{\CM}{[s]_{\sim_{\Gamma}}}(\emptyset)$ :iff 
				$[s]_{\sim_{\Gamma}}\in\mathcal{V}_{\Gamma}(\knows{\CM}{M})$, and
				choose $\Gamma$ to be the (finite) sub-formula closure of $\phi$.
			Hence, we are left to prove that 
				$\mathfrak{M}_{\mathrm{flt}}^{\mathrm{min},\Gamma}$ is indeed an LIiP-model, which
				means that we are left to prove that 
						$\sqsubseteq^{\mathrm{min},\Gamma}$ and ${\access{M}{a}{\mathrm{min},\Gamma}}$ have 
					all the properties stipulated by the semantic interface of LIiP;
					this is straightforward and therefore relegated to 
						Appendix~\ref{appendix:FiniteModelProperty}.
\end{proof}

\begin{corollary}[Algorithmic decidability]\label{corollary:AlgorithmicDecidability}
	LIiP is algorithmically decidable.
\end{corollary}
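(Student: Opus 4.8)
The plan is to run the standard argument that derives decidability from the finite-model property for a logic whose theoremhood is recursively enumerable (Harrop's argument for finitely axiomatised logics with the finite-model property).

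First I would observe that the set of LIiP-theorems is recursively enumerable. By Proposition~\ref{proposition:Hilbert}, $\Clo{}{}$ is a substitution-invariant compact closure operator built from effective inductive clauses: $\Clo{}{0}(\emptyset)=\Gamma_{1}$ is a recursive set of axiom instances, and the passage from $\Clo{}{n}(\emptyset)$ to $\Clo{}{n+1}(\emptyset)$ adds only \emph{modus ponens} consequences and epistemic-antitonicity consequences, both effectively computable relations on formulas. Hence $\LIiP=\bigcup_{n}\Clo{}{n}(\emptyset)$ is r.e., and one can enumerate $\{\phi : \LIiPded\phi\}$.

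Second --- the substantive half --- I would show that $\{\phi : \not\LIiPded\phi\}$ is also r.e., indeed decidable. By axiomatic soundness and semantic completeness (Theorem~\ref{theorem:Adequacy}), $\not\LIiPded\phi$ iff $\not\models\phi$, i.e.\ iff some LIiP-model refutes $\phi$ at some state. By the finite-model property (Theorem~\ref{theorem:FiniteModelProperty}) the refuting model may be taken finite; moreover, inspecting its proof, the witness is the minimal filtration of the refuting model through the subformula closure $\Gamma$ of $\phi$, whose carrier $\states/{\sim_{\Gamma}}$ has at most $2^{|\Gamma|}$ elements --- a bound computable from $\phi$. Thus $\not\LIiPded\phi$ iff $\phi$ is refuted in some finite LIiP-model with at most $2^{|\mathrm{sub}(\phi)|}$ states. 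There are only finitely many such structures up to isomorphism, and in evaluating $\phi$ via Table~\ref{table:SatisfactionRelation} only the finitely many accessibility relations $\access{M}{\CM}{}$ with $M$ occurring in $\mathrm{sub}(\phi)$ are relevant, each a finite object over a finite carrier. The two decidability prerequisites then hold: (i) the satisfaction relation of Table~\ref{table:SatisfactionRelation} is recursive on finite models, so model-checking $\phi$ at a state is decidable; and (ii) whether a finite structure is an LIiP-model --- i.e.\ satisfies the semantic interface of Definition~\ref{definition:KripkeModel} --- is decidable, the only delicate clause being proof-monotonicity, which invokes the message pre-order $\sqsubseteq_{\CM}$ of Definition~\ref{definition:MessagePreordering}; but $\derives{\CM}{}{}$ is decidable in deterministic polynomial time (as noted after Definition~\ref{definition:SemanticIngredients}), so $\sqsubseteq_{\CM}$ restricted to the finitely many terms in play is decidable, and the remaining instances can be met by completing the relation family consistently, as the filtrated model does. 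Hence a decision procedure on input $\phi$ computes $N=2^{|\mathrm{sub}(\phi)|}$, enumerates the finitely many candidate finite LIiP-models of size $\le N$, model-checks $\phi$ in each, and answers ``$\LIiPded\phi$'' iff $\phi$ holds throughout all of them; equivalently, run the theorem-enumeration and the non-theorem-enumeration in parallel until one fires (Post's theorem).

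The main obstacle is bookkeeping in the second step rather than any new idea: extracting the explicit computable size bound from the filtration in Theorem~\ref{theorem:FiniteModelProperty}, and confirming that ``is this finite structure an LIiP-model?'' is decidable --- in particular taming the clause $M\sqsubseteq_{\CM}M'$, which Definition~\ref{definition:MessagePreordering} phrases as a quantification over \emph{all} $s\in\states$ and over all $M,M'\in\messages$, even though only finitely many matter for a fixed $\phi$. One should also flag the standing assumption that we work with pure LIiP over the built-in pairing constructor only: for applied LIiP-theories with additional term constructors, decidability persists exactly as long as the corresponding extra closure conditions on $\clo{a}{}$, and hence on $\derives{a}{}{}$ and $\sqsubseteq_{a}$, remain decidable.
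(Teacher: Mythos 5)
Your proposal is correct and follows essentially the same route as the paper: by axiomatic adequacy reduce $\LIiPded\phi$ to validity, invoke the finite-model property with the $2^{|\mathrm{sub}(\phi)|}$ bound extracted from the filtration, enumerate the finitely many candidate finite models, and model-check. You supply more bookkeeping than the paper does --- in particular the check that ``is this finite structure an LIiP-model?'' is itself decidable (taming the proof-monotonicity clause via the polynomial-time decidability of $\derives{\CM}{}{}$) and the equivalent Post-style parallel enumeration --- but these are elaborations of the same argument, not a different one.
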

\begin{proof}
	In order to algorithmically decide whether or not $\phi\in\LIiP$ 
	(that is, $\LIiPded\phi$) for some $\phi\in\pFormulas$ (and the current choice of $\messages$), 
		axiomatic adequacy allows us to check whether or not $\neg\phi$ is locally satisfiable (That is,
			whether or not $\mathfrak{M},s\models\neg\phi$ for 
				some LIiP-model $\mathfrak{M}$ and state $s$.
					Also, $M\in\clo{\CM}{s}(\emptyset)$ on the currently chosen  
						message language $\messages$ is obviously decidable;
						for other, more complex message languages including cryptographic messages, 
							see for example \cite{TGJ} and \cite{BaskarJamSuresh2010}).
	But then, the finite-model property of LIiP allows us 
		to enumerate all finite LIiP-models $\mathfrak{M}_{\mathrm{fin}}$ up to a size of at most 2 to the power 
			of the size $n$ of the sub-formula closure of $\neg\phi$ and
		to check whether or not $\mathfrak{M}_{\mathrm{fin}},s\models\neg\phi$.
	(First, there are at most $2^n$ equivalence classes for $n$ formulas.
	 Second, 
		checking intuitionistic negation, which 
			is checking classical negation within the up-set of the state $s$ with respect to $\sqsubseteq$, 
				within a finite model is also a finite task.)
\end{proof}
Note that 
	the algorithmic complexity of LIiP will depend on 
		the specific choice of $\messages$ and the correspondingly chosen term axioms.

\section{Conclusion}
We have produced LIiP from LiP with 
	as main contributions those described in Section~\ref{section:Contribution}. 
In future work,
	we shall work out 
	dynamic and first-order extensions of LIiP.
As roughly related work,
	we
		have already mentioned \cite{LIPArtemov} 
			(\cf Page~\pageref{page:LIP:1}, 
						\pageref{page:LIP:2}, 
						\pageref{page:LIP:4}, and 
						\pageref{page:LIP:7}) and 
		can further mention 
			\cite{PouliasisPrimieroIMLA2013} and 
			\cite{SterenBonelliIMLA2013}.
In \cite{PouliasisPrimieroIMLA2013},
	a fragment of an intuitionistic version of the minimal Justification Logic \cite{JustificationLogic} (mJL) is introduced in 
		the context of an ambitious Curry-Howard isomorphism for modular programming.
A similar appreciation can be made of \cite{SterenBonelliIMLA2013}, which
	introduced an intuitionistic fragment of 
		the Logic of (non-interactive) Proofs (extending mJL) \cite{LIPArtemov}.
The main contribution of 
	\cite{PouliasisPrimieroIMLA2013} as well as \cite{SterenBonelliIMLA2013} seems to be  
		a programming calculus for an axiomatically defined intuitionistic modal logic rather than 
			logic itself, whereas 
		ours is an intuitionistic modal logic  
			with an axiomatics and a set-theoretically constructive semantics.

\paragraph{Acknowledgements}
I thank Olga Grinchtein for spotting a few typos.

\bibliographystyle{alpha}

\appendix

\section{Remaining proofs}\label{appendix:Proofs}
\subsection{Proof of Theorem~\ref{theorem:SomeUsefulDeducibleLogicalLaws}}\label{appendix:Proofs:LogicalLaws}
	For 0, combine MM and MP.
	For 1--6, 8--9, and 32--35, 
		consult their analogs and their proofs in LiP \cite{LiP}; they  
			require no non-intuitionistic machinery.
	For 7,
		apply ID to 6.
	For 10, combine ET and MM.
	For 11, 
		consider that: 
			\begin{enumerate}
				\item $\LIiPded\Iproves{M}{\knows{\CM}{M}}{\CM}{}$\hfill self-knowledge
				\item $\LIiPded(\Iproves{M}{\knows{\CM}{M}}{\CM}{})\limp\Iproofdiamond{M}{\knows{\CM}{M}}{\CM}{}$\hfill ID
				\item $\LIiPded(\Iproofdiamond{M}{\knows{\CM}{M}}{\CM}{})\lequiv\neg\neg(\knows{\CM}{M}\land\knows{\CM}{M})$\hfill definition
				\item $\LIiPded\neg\neg(\knows{\CM}{M}\land\knows{\CM}{M})\lequiv\neg\neg(\knows{\CM}{M})$\hfill IL
				\item $\LIiPded\neg\neg(\knows{\CM}{M})$\hfill 1--4, IL.
			\end{enumerate}
	For 12, 
		consider that:
			\begin{enumerate}
				\item $\LIiPded\knows{\CM}{M}\limp((\Iproves{M}{\phi}{\CM}{})\lequiv\phi)$\hfill ET \emph{bis}
				\item $\LIiPded\neg\neg(\knows{\CM}{M})$\hfill CMMC
				\item $\LIiPded\neg\neg((\Iproves{M}{\phi}{\CM}{})\lequiv\phi)$\hfill 1, 2, IL.
			\end{enumerate}
	For 13,
		consider that: 
			\begin{enumerate}
				\item $\LIiPded\neg\phi\limp\neg(\knows{\CM}{M}\land\phi)$\hfill IL
				\item $\LIiPded\neg(\knows{\CM}{M}\land\phi)\lequiv\neg\neg\neg(\knows{\CM}{M}\land\phi)$\hfill IL (triple-negation law)
				\item $\LIiPded\neg\neg\neg(\knows{\CM}{M}\land\phi)\lequiv\neg(\Iproofdiamond{M}{\phi}{\CM}{})$\hfill definition
				\item $\LIiPded\neg\phi\limp\neg(\Iproofdiamond{M}{\phi}{\CM}{})$\hfill 1--3, IL.
			\end{enumerate}
	For 14,
		consider that:
			\begin{enumerate}
				\item $\LIiPded\neg(\Iproofdiamond{M}{\phi}{\CM}{})\lequiv\neg\neg\neg(\knows{\CM}{M}\land\phi)$\hfill definition
				\item $\LIiPded\neg\neg\neg(\knows{\CM}{M}\land\phi)\lequiv\neg(\knows{\CM}{M}\land\phi)$\hfill IL (triple-negation law)
				\item $\LIiPded\neg(\knows{\CM}{M}\land\phi)\lequiv\neg(\knows{\CM}{M}\land\Iproves{M}{\phi}{\CM}{})$\hfill 2, ET \emph{bis}, IL
				\item $\LIiPded\neg(\knows{\CM}{M}\land\Iproves{M}{\phi}{\CM}{})\lequiv
									((\Iproves{M}{\phi}{\CM}{})\limp\neg(\knows{\CM}{M}))$\hfill IL 
				\item $\LIiPded((\Iproves{M}{\phi}{\CM}{})\limp\neg(\knows{\CM}{M}))\lequiv
									((\Iproves{M}{\phi}{\CM}{})\limp\false)$\hfill 4, CMMC, IL
				\item $\LIiPded((\Iproves{M}{\phi}{\CM}{})\limp\false)\lequiv
									\neg(\Iproves{M}{\phi}{\CM}{})$\hfill IL
				\item $\LIiPded\neg(\Iproofdiamond{M}{\phi}{\CM}{})\lequiv
								\neg(\Iproves{M}{\phi}{\CM}{})$\hfill 1--6, IL.
			\end{enumerate}	
	For 15, inspect 13 and 14.
	And 16, 17, and 18, are instances of 13, 14, and 15.
			
	For 19,
		consider ID as well as 16 and 18, and that: 
			\begin{enumerate}
				\item $\LIiPded(\Iproofdiamond{M}{\phi}{\CM}{})\lequiv\neg\neg(\knows{\CM}{M}\land\phi)$\hfill definition
				\item $\LIiPded\neg\neg(\knows{\CM}{M}\land\phi)\limp 
						(\neg\neg(\knows{\CM}{M})\land\neg\neg\phi)$\hfill IL
				\item $\LIiPded(\neg\neg(\knows{\CM}{M})\land\neg\neg\phi)\limp\neg\neg\phi$\hfill IL
				\item $\LIiPded(\Iproofdiamond{M}{\phi}{\CM}{})\limp\neg\neg\phi$\hfill 1--3, IL.
			\end{enumerate}
	For 20,
		consider that:
			\begin{enumerate}
				\item $\LIiPded(\Iproofdiamond{M}{\phi}{\CM}{})\limp\neg\neg\phi$\hfill EWDN
				\item $\LIiPded\phi'\limp\Iproves{M}{\phi'}{\CM}{}$\hfill MM
				\item $\LIiPded(\neg\neg\phi\limp\phi')\limp((\Iproofdiamond{M}{\phi}{\CM}{})\limp\Iproves{M}{\phi'}{\CM}{})$\hfill 1, 2, IL.
			\end{enumerate}
	For 21,
		consider that:
			\begin{enumerate}
				\item $\LIiPded(\neg\neg\phi\limp\phi)\limp((\Iproofdiamond{M}{\phi}{\CM}{})\limp\Iproves{M}{\phi}{\CM}{})$\hfill CF
				\item $\LIiPded(\Iproves{M}{\phi}{\CM}{})\limp\Iproofdiamond{M}{\phi}{\CM}{}$\hfill ID
				\item $\LIiPded(\neg\neg\phi\limp\phi)\limp((\Iproofdiamond{M}{\phi}{\CM}{})\lequiv\Iproves{M}{\phi}{\CM}{})$\hfill 1, 2, IL.
			\end{enumerate}			
	
	For 22, inspect ET \emph{bis;} 
	for 23, 22 and 21;
	for 24 and 25, $\LIiPded\neg\false$ and 15 and 13, respectively; 
	for 26, MM and ID.
	For 27, 
		suppose that $\LIiPded\knows{\CM}{M}\limp\phi$.
	Hence,
		$\LIiPded(\Iproves{M}{\knows{\CM}{M}}{\CM}{})\limp\Iproves{M}{\phi}{\CM}{}$ by R, and
		$\LIiPded\Iproves{M}{\phi}{\CM}{}$ by self-knowledge.
	Conversely suppose that
		$\LIiPded\Iproves{M}{\phi}{\CM}{}$.
	Hence $\LIiPded\knows{\CM}{M}\limp\phi$ by ET.
	For 28, instantiate $\phi$ in 27 with $\knows{\CM}{M'}$.
	For 29, inspect 10 and 27, and 
	for 30 and 31, MM.
	For 36, 
		consider that:
			\begin{enumerate}
				\item $\LIiPded(\Iproves{M}{\phi}{\CM}{})\limp\Iproves{M}{(\Iproves{M}{\phi}{\CM}{})}{\CM}{}$\hfill (4)
				\item $\LIiPded\Iproves{M}{((\Iproves{M}{\phi}{\CM}{})\lequiv\phi)}{\CM}{}$\hfill ET \emph{bis} self-proof
				\item $\LIiPded\Iproves{M}{(((\Iproves{M}{\phi}{\CM}{})\limp\phi)\land(\phi\limp\Iproves{M}{\phi}{\CM}{}))}{\CM}{}$\hfill 2, definition
				\item $\LIiPded\Iproves{M}{((\Iproves{M}{\phi}{\CM}{})\limp\phi)}{\CM}{}$\hfill 3, proof conjunctions \emph{bis}
				\item $\LIiPded(\Iproves{M}{((\Iproves{M}{\phi}{\CM}{})\limp\phi)}{\CM}{})\limp
					((\Iproves{M}{(\Iproves{M}{\phi}{\CM}{})}{\CM}{})\limp\Iproves{M}{\phi}{\CM}{})$\hfill K
				\item $\LIiPded(\Iproves{M}{(\Iproves{M}{\phi}{\CM}{})}{\CM}{})\limp\Iproves{M}{\phi}{\CM}{}$\hfill 4, 5, IL
				\item $\LIiPded(\Iproves{M}{(\Iproves{M}{\phi}{\CM}{})}{\CM}{})\lequiv\Iproves{M}{\phi}{\CM}{}$\hfill 1, 6, IL.	
			\end{enumerate}
	For 37, 
		consider that:
			\begin{enumerate}
				\item $\LIiPded(\Iproofdiamond{M}{(\Iproofdiamond{M}{\phi}{\CM}{})}{\CM}{})\lequiv
						\neg\neg(\knows{\CM}{M}\land\neg\neg(\knows{\CM}{M}\land\phi))$\hfill definition
				\item $\LIiPded\begin{array}{@{}l@{}}
						\neg\neg(\knows{\CM}{M}\land\neg\neg(\knows{\CM}{M}\land\phi))\limp\\
						(\neg\neg(\knows{\CM}{M})\land\neg\neg\neg\neg(\knows{\CM}{M}\land\phi))
						\end{array}$\hfill IL
				\item $\LIiPded\begin{array}{@{}l@{}}
						(\neg\neg(\knows{\CM}{M})\land\neg\neg\neg\neg(\knows{\CM}{M}\land\phi))\limp
						\neg\neg\neg\neg(\knows{\CM}{M}\land\phi)
						\end{array}$\hfill IL
				\item $\LIiPded\neg\neg\neg(\neg(\knows{\CM}{M}\land\phi))\lequiv 
						\neg(\neg(\knows{\CM}{M}\land\phi))$\hfill IL (triple-negation law)
				\item $\LIiPded\neg\neg(\knows{\CM}{M}\land\phi)\lequiv\Iproofdiamond{M}{\phi}{\CM}{}$\hfill definition
				\item $\LIiPded(\Iproofdiamond{M}{(\Iproofdiamond{M}{\phi}{\CM}{})}{\CM}{})\limp
						\Iproofdiamond{M}{\phi}{\CM}{}$\hfill 1--5, IL
				\item $\LIiPded(\Iproofdiamond{M}{\phi}{\CM}{})\limp
						\Iproofdiamond{M}{(\Iproofdiamond{M}{\phi}{\CM}{})}{\CM}{}$\hfill WMM
				\item $\LIiPded(\Iproofdiamond{M}{(\Iproofdiamond{M}{\phi}{\CM}{})}{\CM}{})\lequiv
						\Iproofdiamond{M}{\phi}{\CM}{}$\hfill 6, 7, IL.	
			\end{enumerate}
	For 38, 
		consider MM, ID, and that:
			\begin{enumerate}
				\item $\LIiPded\phi\limp\Iproves{M}{\phi}{\CM}{}$\hfill MM
				\item $\LIiPded(\Iproves{M}{\phi}{\CM}{})\limp\Iproofdiamond{M}{(\Iproves{M}{\phi}{\CM}{})}{\CM}{}$\hfill WMM
				\item $\LIiPded\phi\limp\Iproofdiamond{M}{(\Iproves{M}{\phi}{\CM}{})}{\CM}{}$\hfill 1, 2, IL.
			\end{enumerate}
			and also that:
			\begin{enumerate}
				\item $\LIiPded(\Iproofdiamond{M}{\phi}{\CM}{})\lequiv\neg\neg(\knows{\CM}{M}\land\phi)$\hfill definition
				\item $\LIiPded\knows{\CM}{M}\limp((\Iproves{M}{\phi}{\CM}{})\lequiv\phi)$\hfill ET \emph{bis}
				\item $\LIiPded\neg\neg(\knows{\CM}{M}\land\phi)\lequiv
								\neg\neg(\knows{\CM}{M}\land\Iproves{M}{\phi}{\CM}{})$\hfill 2, IL
				\item $\LIiPded\neg\neg(\knows{\CM}{M}\land\Iproves{M}{\phi}{\CM}{})\lequiv
							\Iproofdiamond{M}{(\Iproves{M}{\phi}{\CM}{})}{\CM}{}$\hfill	definition
				\item $\LIiPded(\Iproofdiamond{M}{\phi}{\CM}{})\lequiv\Iproofdiamond{M}{(\Iproves{M}{\phi}{\CM}{})}{\CM}{}$\hfill  1, 3, 4, IL.
			\end{enumerate}				
	For 39, 
		consider that:
			\begin{enumerate}
				\item $\LIiPded(\Iproves{M}{(\Iproofdiamond{M}{\phi}{\CM}{})}{\CM}{})\limp
								\Iproofdiamond{M}{(\Iproofdiamond{M}{\phi}{\CM}{})}{\CM}{})$\hfill ID
				\item $\LIiPded(\Iproofdiamond{M}{(\Iproofdiamond{M}{\phi}{\CM}{})}{\CM}{})\limp
								\Iproofdiamond{M}{\phi}{\CM}{}$\hfill MI \emph{bis}
				\item $\LIiPded\Iproofdiamond{M}{\phi}{\CM}{}\limp\Iproofdiamond{M}{(\Iproves{M}{\phi}{\CM}{})}{\CM}{}$\hfill NMM
				\item $\LIiPded(\Iproves{M}{(\Iproofdiamond{M}{\phi}{\CM}{})}{\CM}{})\limp\Iproofdiamond{M}{(\Iproves{M}{\phi}{\CM}{})}{\CM}{}$\hfill 1--3, IL.
			\end{enumerate}				
	For 40, consider 
		the instance $\LIiPded\knows{\CM}{M}\limp((\Iproves{M}{(\phi\lor\phi')}{\CM}{})\limp(\phi\lor\phi'))$ of ET \emph{bis}, and 
		that $\LIiPded(\phi\lor\phi')\limp((\Iproves{M}{\phi}{\CM}{})\lor\Iproves{M}{\phi'}{\CM}{})$ by MM.
	Hence $\LIiPded\knows{\CM}{M}\limp((\Iproves{M}{(\phi\lor\phi)}{\CM}{})\limp((\Iproves{M}{\phi}{\CM}{})\lor\Iproves{M}{\phi'}{\CM}{}))$.
	For 41,
		consider 27 and 40.
	For 42,
		consider that:
			\begin{enumerate}
				\item $\LIiPded(\Iproves{M}{(\knows{\CM}{M}\land\phi)}{\CM}{})\limp
							((\Iproves{M}{\knows{\CM}{M}}{\CM}{})\land\Iproves{M}{\phi}{\CM}{})$\hfill pr.\ conj.\ \emph{bis}
				\item $\LIiPded((\Iproves{M}{\knows{\CM}{M}}{\CM}{})\land\Iproves{M}{\phi}{\CM}{})\limp
						\Iproves{M}{\phi}{\CM}{}$\hfill IL
				\item $\LIiPded(\Iproves{M}{(\knows{\CM}{M}\land\phi)}{\CM}{})\limp\Iproves{M}{\phi}{\CM}{}$\hfill 1, 2, IL
				\item $\LIiPded\Iproves{M}{\knows{\CM}{M}}{\CM}{}$\hfill self-knowledge
				\item $\LIiPded(\Iproves{M}{\phi}{\CM}{})\limp\Iproves{M}{\phi}{\CM}{}$\hfill IL
				\item $\LIiPded(\Iproves{M}{\phi}{\CM}{})\limp((\Iproves{M}{\knows{\CM}{M}}{\CM}{})\land\Iproves{M}{\phi}{\CM}{})$\hfill 4, 5, IL
				\item $\LIiPded((\Iproves{M}{\knows{\CM}{M}}{\CM}{})\land\Iproves{M}{\phi}{\CM}{})\limp\Iproves{M}{(\knows{\CM}{M}\land\phi)}{\CM}{}$\hfill pr.\ conj.\ \emph{bis}
				\item $\LIiPded(\Iproves{M}{\phi}{\CM}{})\limp\Iproves{M}{(\knows{\CM}{M}\land\phi)}{\CM}{}$\hfill 6, 7, IL
				\item $\LIiPded(\Iproves{M}{\phi}{\CM}{})\lequiv\Iproves{M}{(\knows{\CM}{M}\land\phi)}{\CM}{}$\hfill 3, 8, IL.
			\end{enumerate}
	For 43,
		consider that :
			\begin{enumerate}
				\item $\LIiPded(\Iproves{M}{\phi}{\CM}{})\lequiv\Iproves{M}{(\knows{\CM}{M}\land\phi)}{\CM}{}$\hfill EI 
				\item $\LIiPded\knows{\CM}{M}\limp((\Iproves{M}{\phi}{\CM}{})\lequiv\phi)$\hfill ET \emph{bis} 
				\item $\LIiPded(\knows{\CM}{M}\land\phi)\lequiv
								(\knows{\CM}{M}\land\Iproves{M}{\phi}{\CM}{})$\hfill 2, IL
				\item $\LIiPded\Iproves{M}{(\knows{\CM}{M}\land\phi)}{\CM}{}\lequiv
								\Iproves{M}{(\knows{\CM}{M}\land\Iproves{M}{\phi}{\CM}{})}{\CM}{}$\hfill 3, R \emph{bis}
				\item $\LIiPded(\Iproves{M}{\phi}{\CM}{})\lequiv\Iproves{M}{(\knows{\CM}{M}\land\Iproves{M}{\phi}{\CM}{})}{\CM}{}$\hfill 1, 4, IL.
			\end{enumerate}
	For 44,
		consider that:
			\begin{enumerate}
				\item $\LIiPded(\Iproofdiamond{M}{(\phi\lor\phi')}{\CM}{})\lequiv
					\neg\neg(\knows{\CM}{M}\land(\phi\lor\phi'))$\hfill definition
				\item $\LIiPded\neg\neg(\knows{\CM}{M}\land(\phi\lor\phi'))\lequiv
						\neg\neg((\knows{\CM}{M}\land\phi)\lor(\knows{\CM}{M}\land\phi'))$\hfill IL
				\item $\LIiPded\begin{array}{@{}l@{}}
							\neg\neg((\knows{\CM}{M}\land\phi)\lor(\knows{\CM}{M}\land\phi'))\lequiv\\
						(\neg\neg(\knows{\CM}{M}\land\phi)\lor\neg\neg(\knows{\CM}{M}\land\phi'))
						\end{array}$\hfill IL
				\item $\LIiPded(\Iproofdiamond{M}{(\phi\lor\phi')}{\CM}{})\lequiv
						(\neg\neg(\knows{\CM}{M}\land\phi)\lor\neg\neg(\knows{\CM}{M}\land\phi'))$\hfill 1--3, IL
				\item $\LIiPded(\Iproofdiamond{M}{\phi}{\CM}{})\lequiv
					\neg\neg(\knows{\CM}{M}\land\phi)$\hfill definition
				\item $\LIiPded(\Iproofdiamond{M}{\phi'}{\CM}{})\lequiv
					\neg\neg(\knows{\CM}{M}\land\phi')$\hfill definition
				\item $\LIiPded(\Iproofdiamond{M}{(\phi\lor\phi')}{\CM}{})\lequiv
			((\Iproofdiamond{M}{\phi}{\CM}{})\lor\Iproofdiamond{M}{\phi'}{\CM}{})$\hfill 4, 5, 6, IL.
			\end{enumerate}
	For 45,
		consider that:  
			\begin{enumerate}
				\item $\LIiPded(\Iproves{M}{(\phi\limp\phi')}{\CM}{})\limp(\knows{\CM}{M}\limp(\phi\limp\phi'))$\hfill ET 
				\item $\LIiPded(\knows{\CM}{M}\limp(\phi\limp\phi'))\lequiv
				((\knows{\CM}{M}\limp\phi)\limp(\knows{\CM}{M}\limp\phi'))$\hfill IL 
				\item $\LIiPded\begin{array}{@{}l@{}}
					((\knows{\CM}{M}\limp\phi)\limp(\knows{\CM}{M}\limp\phi'))\limp\\
						((\knows{\CM}{M}\land(\knows{\CM}{M}\limp\phi))\limp
						 (\knows{\CM}{M}\land(\knows{\CM}{M}\limp\phi')))
					 \end{array}$\hfill IL
				\item $\LIiPded\begin{array}{@{}l@{}}
						((\knows{\CM}{M}\land(\knows{\CM}{M}\limp\phi))\limp
						 (\knows{\CM}{M}\land(\knows{\CM}{M}\limp\phi')))\lequiv\\
						((\knows{\CM}{M}\land\phi)\limp
						 (\knows{\CM}{M}\land\phi')) 
					 \end{array}$\hfill IL
				\item $\LIiPded\begin{array}{@{}l@{}}
						((\knows{\CM}{M}\land\phi)\limp
						 (\knows{\CM}{M}\land\phi'))\limp\\
						(\neg\neg(\knows{\CM}{M}\land\phi)\limp
						 \neg\neg(\knows{\CM}{M}\land\phi'))
					 \end{array}$\hfill IL
				\item $\LIiPded\begin{array}{@{}l@{}}
						(\neg\neg(\knows{\CM}{M}\land\phi)\limp\neg\neg(\knows{\CM}{M}\land\phi'))\lequiv\\
						((\Iproofdiamond{M}{\phi}{\CM}{})\limp\Iproofdiamond{M}{\phi'}{\CM}{})
					 \end{array}$\hfill definition
				\item $\LIiPded(\Iproves{M}{(\phi\limp\phi')}{\CM}{})\limp((\Iproofdiamond{M}{\phi}{\CM}{})\limp\Iproofdiamond{M}{\phi'}{\CM}{})$\hfill 1--6, IL.
			\end{enumerate}
	For 46, 
		consider that:
			\begin{enumerate}
				\item $\LIiPded\phi\limp\Iproofdiamond{M}{\phi}{\CM}{}$\hfill WMM
				\item $\LIiPded(\Iproves{M}{\phi'}{\CM}{})\limp(\knows{\CM}{M}\limp\phi')$\hfill ET
				\item $\LIiPded((\Iproofdiamond{M}{\phi}{\CM}{})\limp\Iproves{M}{\phi'}{\CM}{})\limp(\phi\limp(\knows{\CM}{M}\limp\phi'))$\hfill 1, 2, IL
				\item $\LIiPded(\phi\limp(\knows{\CM}{M}\limp\phi'))\limp\Iproves{M}{(\phi\limp(\knows{\CM}{M}\limp\phi'))}{\CM}{}$\hfill MM
				\item $\LIiPded((\Iproofdiamond{M}{\phi}{\CM}{})\limp\Iproves{M}{\phi'}{\CM}{})\limp\Iproves{M}{(\phi\limp(\knows{\CM}{M}\limp\phi'))}{\CM}{}$\hfill 3, 4, IL
				\item $\LIiPded\begin{array}{@{}l@{}}
						(\Iproves{M}{(\phi\limp(\knows{\CM}{M}\limp\phi'))}{\CM}{})\lequiv\\
						\Iproves{M}{(\knows{\CM}{M}\land(\phi\limp(\knows{\CM}{M}\limp\phi')))}{\CM}{}
						\end{array}$\hfill EI
				\item $\LIiPded\begin{array}{@{}l@{}}
						((\Iproofdiamond{M}{\phi}{\CM}{})\limp\Iproves{M}{\phi'}{\CM}{})\limp\\
						\Iproves{M}{(\knows{\CM}{M}\land(\phi\limp(\knows{\CM}{M}\limp\phi')))}{\CM}{}
						\end{array}$\hfill 5, 6, IL
				\item $\LIiPded(\knows{\CM}{M}\land(\phi\limp(\knows{\CM}{M}\limp\phi')))\limp
						(\phi\limp\phi')$\hfill IL
				\item $\LIiPded\Iproves{M}{(\knows{\CM}{M}\land(\phi\limp(\knows{\CM}{M}\limp\phi')))}{\CM}{}\limp\Iproves{M}{(\phi\limp\phi')}{\CM}{}$\hfill 8, R
				\item $\LIiPded((\Iproofdiamond{M}{\phi}{\CM}{})\limp\Iproves{M}{\phi'}{\CM}{})\limp
						\Iproves{M}{(\phi\limp\phi')}{\CM}{}$\hfill 7, 9, IL.
			\end{enumerate}
	For 47,
		inspect CF and PS5.
	For 48, 
		consider that:
			\begin{enumerate}
				\item $\LIiPded(\neg\neg\phi\limp\phi')\limp
						((\Iproofdiamond{M}{\phi}{\CM}{})\limp\Iproves{M}{\phi'}{\CM}{})$\hfill CF
				\item $\LIiPded(\neg\neg\phi'\limp\phi)\limp
						((\Iproofdiamond{M}{\phi'}{\CM}{})\limp\Iproves{M}{\phi}{\CM}{})$\hfill CF		
				\item $\LIiPded(\Iproofdiamond{M}{(\phi\lor\phi')}{\CM}{})\lequiv
						((\Iproofdiamond{M}{\phi}{\CM}{})\lor\Iproofdiamond{M}{\phi'}{\CM}{})$\hfill PS4
				\item $\LIiPded(\Iproves{M}{(\phi\lor\phi')}{\CM}{})\limp 
								\Iproofdiamond{M}{(\phi\lor\phi')}{\CM}{}$\hfill ID
				\item\begin{tabular}{@{}l@{}l@{}}
					$\LIiPded$&$((\neg\neg\phi\limp\phi')\land(\neg\neg\phi'\limp\phi))\limp$\\  
						&$((\Iproves{M}{(\phi\lor\phi')}{\CM}{})\limp((\Iproves{M}{\phi}{\CM}{})\lor\Iproves{M}{\phi'}{\CM}{}))$
				\end{tabular}\hfill 1--4, IL.
			\end{enumerate}
	For 49,
		consider that 
			$\LIiPded(\Iproofdiamond{M}{\phi}{\CM}{})\limp
						\Iproves{M}{(\Iproofdiamond{M}{\phi}{\CM}{})}{\CM}{}$ by MM; thus 
			$\LIiPded(\Iproves{M}{\phi}{\CM}{})\limp
						\Iproves{M}{(\Iproofdiamond{M}{\phi}{\CM}{})}{\CM}{}$ by ID; and thus
			$\LIiPded\phi\limp\Iproves{M}{(\Iproofdiamond{M}{\phi}{\CM}{})}{\CM}{}$ by MM.			
	For 50,
		consider that:
			\begin{enumerate}
				\item $\LIiPded(\Iproves{M}{\phi}{\CM}{})\limp\Iproofdiamond{M}{\phi}{\CM}{}$\hfill ID
				\item $\LIiPded\Iproves{M}{((\Iproves{M}{\phi}{\CM}{})\limp\Iproofdiamond{M}{\phi}{\CM}{})}{\CM}{}$\hfill 1, N
				\item $\LIiPded(\Iproofdiamond{M}{(\Iproves{M}{\phi}{\CM}{})}{\CM}{})\limp
								\Iproofdiamond{M}{(\Iproofdiamond{M}{\phi}{\CM}{})}{\CM}{}$\hfill 2, PS2
				\item $\LIiPded(\Iproofdiamond{M}{(\Iproofdiamond{M}{\phi}{\CM}{})}{\CM}{})\limp
								\Iproofdiamond{M}{\phi}{\CM}{}$\hfill MI \emph{bis}
				\item $\LIiPded(\Iproofdiamond{M}{\phi}{\CM}{})\limp 
								\Iproves{M}{(\Iproofdiamond{M}{\phi}{\CM}{})}{\CM}{}$\hfill MM
				\item $\LIiPded(\Iproofdiamond{M}{(\Iproves{M}{\phi}{\CM}{})}{\CM}{})\limp
						\Iproves{M}{(\Iproofdiamond{M}{\phi}{\CM}{})}{\CM}{}$\hfill 3--5, IL.
			\end{enumerate}
	For 51, combine MS and MS \emph{bis}.
	For 52, consider the in fact stronger-than-necessary proof of 38.
	For 53, consider that:
		\begin{enumerate}
				\item $\LIiPded(\Iproves{M}{(\Iproofdiamond{M}{\phi}{\CM}{})}{\CM}{})\limp
								\Iproofdiamond{M}{(\Iproofdiamond{M}{\phi}{\CM}{})}{\CM}{}$\hfill ID
				\item $\LIiPded(\Iproofdiamond{M}{(\Iproofdiamond{M}{\phi}{\CM}{})}{\CM}{})\lequiv 
								\Iproofdiamond{M}{\phi}{\CM}{}$\hfill MI \emph{bis}
				\item $\LIiPded(\Iproves{M}{(\Iproofdiamond{M}{\phi}{\CM}{})}{\CM}{})\limp
								\Iproofdiamond{M}{\phi}{\CM}{}$\hfill 1, 2, IL
				\item $\LIiPded(\Iproofdiamond{M}{\phi}{\CM}{})\limp
								\Iproves{M}{(\Iproofdiamond{M}{\phi}{\CM}{})}{\CM}{}$\hfill MM
				\item $\LIiPded(\Iproves{M}{(\Iproofdiamond{M}{\phi}{\CM}{})}{\CM}{})\lequiv
								\Iproofdiamond{M}{\phi}{\CM}{}$\hfill 3, 4 IL.
			\end{enumerate}
	For 54, consider that:
		\begin{enumerate}	
			\item $\LIiPded(\Iproves{M}{\neg\neg\phi}{\CM}{})\limp\Iproves{M}{(\knows{\CM}{M}\land\neg\neg\phi)}{\CM}{}$\hfill EI
			\item $\LIiPded(\knows{\CM}{M}\land\neg\neg\phi)\limp\neg\neg(\knows{\CM}{M}\land\phi)$\hfill IL
			\item $\LIiPded(\Iproves{M}{(\knows{\CM}{M}\land\neg\neg\phi)}{\CM}{})\limp
								\Iproves{M}{\neg\neg(\knows{\CM}{M}\land\phi)}{\CM}{}$\hfill 2, R
			\item $\LIiPded(\Iproves{M}{\neg\neg\phi}{\CM}{})\limp\Iproves{M}{\neg\neg(\knows{\CM}{M}\land\phi)}{\CM}{}$\hfill 1, 3, IL
			\item $\LIiPded(\Iproves{M}{\neg\neg(\knows{\CM}{M}\land\phi)}{\CM}{})\lequiv
							\Iproves{M}{(\Iproofdiamond{M}{\phi}{\CM}{})}{\CM}{}$\hfill definition
			\item $\LIiPded(\Iproves{M}{\neg\neg\phi}{\CM}{})\limp\Iproves{M}{(\Iproofdiamond{M}{\phi}{\CM}{})}{\CM}{}$\hfill 4, 5, IL
			\item $\LIiPded(\Iproves{M}{(\Iproofdiamond{M}{\phi}{\CM}{})}{\CM}{})\lequiv
							\Iproofdiamond{M}{\phi}{\CM}{}$\hfill MMI \emph{bis}
			\item $\LIiPded(\Iproves{M}{\neg\neg\phi}{\CM}{})\limp
							\Iproofdiamond{M}{\phi}{\CM}{}$\hfill 6, 7, IL.
		\end{enumerate}
	For 55, consider that:
		\begin{enumerate}
			\item $\LIiPded(\Iproves{M}{\neg\neg\phi}{\CM}{})\limp
							\Iproofdiamond{M}{\phi}{\CM}{}$\hfill DNA
			\item $\LIiPded(\Iproofdiamond{M}{\phi}{\CM}{})\limp\neg\neg\phi$\hfill EWDN
			\item $\LIiPded\phi\limp\Iproves{M}{\phi}{\CM}{}$\hfill MM
			\item $\LIiPded(\Iproofdiamond{M}{\phi}{\CM}{})\limp(\neg\neg\phi\land(\phi\limp\Iproves{M}{\phi}{\CM}{}))$\hfill 2, 3, IL
			\item $\LIiPded(\neg\neg\phi\land(\phi\limp\Iproves{M}{\phi}{\CM}{}))\limp
							\neg\neg(\Iproves{M}{\phi}{\CM}{})$\hfill IL
			\item $\LIiPded(\Iproves{M}{\neg\neg\phi}{\CM}{})\limp
						\neg\neg(\Iproves{M}{\phi}{\CM}{})$\hfill 1, 4, 5, IL.
		\end{enumerate}	
	For 56, consider that:
		\begin{enumerate}
			\item $\LIiPded\neg(\Iproofdiamond{M}{\phi}{\CM}{})\lequiv
							\neg\neg\neg(\knows{\CM}{M}\land\phi)$\hfill definition
			\item $\LIiPded\neg\neg\neg(\knows{\CM}{M}\land\phi)\lequiv\neg(\knows{\CM}{M}\land\phi)$\hfill IL (triple-negation law) 
			\item $\LIiPded\neg(\knows{\CM}{M}\land\phi)\lequiv(\knows{\CM}{M}\limp\neg\phi)$\hfill IL
			\item $\LIiPded\neg\neg(\knows{\CM}{M})$\hfill CMMC
			\item $\LIiPded(\knows{\CM}{M}\limp\neg\phi)\lequiv
							(\neg\neg(\knows{\CM}{M})\land (\knows{\CM}{M}\limp\neg\phi))$\hfill 4, IL
			\item $\LIiPded(\neg\neg(\knows{\CM}{M})\land (\knows{\CM}{M}\limp\neg\phi))\limp
							\neg\neg(\knows{\CM}{M}\land\neg\phi)$\hfill IL
			\item $\LIiPded\neg\neg(\knows{\CM}{M}\land\neg\phi)\lequiv\Iproofdiamond{M}{\neg\phi}{\CM}{}$\hfill definition
			\item $\LIiPded\neg(\Iproofdiamond{M}{\phi}{\CM}{})\limp\Iproofdiamond{M}{\neg\phi}{\CM}{}$\hfill 1--7, IL.
		\end{enumerate}	
	For 57, combine FNDETU and WNC.
	For 58, consider that:
		\begin{enumerate}
			\item $\LIiPded(\Iproofdiamond{M}{\phi}{\CM}{})\limp\neg\neg\phi$\hfill EWDN
			\item $\LIiPded\phi\limp\Iproves{M}{\phi}{\CM}{}$\hfill MM
			\item $\LIiPded(\neg\neg\phi\land(\phi\limp\Iproves{M}{\phi}{\CM}{}))\limp\neg\neg(\Iproves{M}{\phi}{\CM}{})$\hfill IL
			\item $\LIiPded\neg\neg\phi\limp\neg\neg(\Iproves{M}{\phi}{\CM}{})$\hfill 2, 3, IL
			\item $\LIiPded(\Iproofdiamond{M}{\phi}{\CM}{})\limp\neg\neg(\Iproves{M}{\phi}{\CM}{})$\hfill 1, 4, IL
			\item $\LIiPded(\Iproves{M}{\phi}{\CM}{})\limp\Iproofdiamond{M}{\phi}{\CM}{}$\hfill ID
			\item $\LIiPded(\neg\neg(\Iproves{M}{\phi}{\CM}{})\land((\Iproves{M}{\phi}{\CM}{})\limp\Iproofdiamond{M}{\phi}{\CM}{}))\limp\neg\neg(\Iproofdiamond{M}{\phi}{\CM}{})$\hfill IL
			\item $\LIiPded\neg\neg(\Iproves{M}{\phi}{\CM}{})\limp\neg\neg(\Iproofdiamond{M}{\phi}{\CM}{})$\hfill 6, 7, IL
			\item $\LIiPded\neg\neg(\Iproofdiamond{M}{\phi}{\CM}{})\lequiv\neg\neg\neg\neg(\knows{\CM}{M}\land\phi)$\hfill definition
			\item $\LIiPded\neg\neg\neg\neg(\knows{\CM}{M}\land\phi)\lequiv
							\neg\neg(\knows{\CM}{M}\land\phi)$\hfill triple-negation law
			\item $\LIiPded\neg\neg(\knows{\CM}{M}\land\phi)\lequiv\Iproofdiamond{M}{\phi}{\CM}{}$\hfill definition
			\item $\LIiPded\neg\neg(\Iproves{M}{\phi}{\CM}{})\limp\Iproofdiamond{M}{\phi}{\CM}{}$\hfill 8--11, IL
			\item $\LIiPded(\Iproofdiamond{M}{\phi}{\CM}{})\lequiv\neg\neg(\Iproves{M}{\phi}{\CM}{})$\hfill 5, 12, IL.
		\end{enumerate}	
	For 59, consider that:
		\begin{enumerate}
			\item $\LIiPded(\Iproves{M}{(\phi\limp\phi')}{\CM}{})\limp
						((\Iproves{M}{\phi}{\CM}{})\limp\Iproves{M}{\phi'}{\CM}{})$\hfill K
			\item $\LIiPded\begin{array}{@{}l@{}}
						((\Iproves{M}{\phi}{\CM}{})\land\Iproofdiamond{M}{(\phi\limp\phi')}{\CM}{})\limp\\
								(((\Iproves{M}{(\phi\limp\phi')}{\CM}{})\limp\Iproves{M}{\phi'}{\CM}{})\land 
									\Iproofdiamond{M}{(\phi\limp\phi')}{\CM}{})
							\end{array}$\hfill 1, IL
			\item $\LIiPded(\Iproofdiamond{M}{(\phi\limp\phi')}{\CM}{})\lequiv\neg\neg(\Iproves{M}{(\phi\limp\phi')}{\CM}{})$\hfill MDN
			\item $\LIiPded\begin{array}{@{}l@{}}
								(((\Iproves{M}{(\phi\limp\phi')}{\CM}{})\limp\Iproves{M}{\phi'}{\CM}{})\land 
									\Iproofdiamond{M}{(\phi\limp\phi')}{\CM}{})\lequiv\\ 
								(((\Iproves{M}{(\phi\limp\phi')}{\CM}{})\limp\Iproves{M}{\phi'}{\CM}{})\land 
									\neg\neg(\Iproves{M}{(\phi\limp\phi')}{\CM}{}))
							\end{array}$\hfill 3, IL
			\item $\LIiPded\begin{array}{@{}l@{}}
								(((\Iproves{M}{(\phi\limp\phi')}{\CM}{})\limp\Iproves{M}{\phi'}{\CM}{})\land 
									\neg\neg(\Iproves{M}{(\phi\limp\phi')}{\CM}{}))\limp\\
									\neg\neg(\Iproves{M}{\phi'}{\CM}{})
							\end{array}$\hfill IL
			\item $\LIiPded\neg\neg(\Iproves{M}{\phi'}{\CM}{})\lequiv\Iproofdiamond{M}{\phi'}{\CM}{}$\hfill MDN
			\item $\LIiPded((\Iproves{M}{\phi}{\CM}{})\land\Iproofdiamond{M}{(\phi\limp\phi')}{\CM}{})\limp\Iproofdiamond{M}{\phi'}{\CM}{}$\hfill 2, 4, 5, 6, IL.
		\end{enumerate}	
	For 60, consider that:
		\begin{enumerate}
			\item $\LIiPded(\Iproofdiamond{M}{(\phi\land\neg\phi)}{\CM}{})\limp\neg\neg(\phi\land\neg\phi)$\hfill EWDN
			\item $\LIiPded\neg\neg(\phi\land\neg\phi)\limp(\neg(\phi\land\neg\phi)\limp(\phi\land\neg\phi))$\hfill IL (\emph{reductio ad absurdum})
			\item $\LIiPded\false\limp\false$\hfill IL
			\item $\LIiPded(\false\limp\false)\lequiv\neg\false$\hfill IL
			\item $\LIiPded\neg\false\lequiv\neg(\phi\land\neg\phi)$\hfill IL
			\item $\LIiPded\neg(\phi\land\neg\phi)$\hfill 3, 4, 5, IL
			\item $\LIiPded\neg\neg(\phi\land\neg\phi)\limp(\phi\land\neg\phi)$\hfill 2, 6, IL
			\item $\LIiPded(\Iproofdiamond{M}{(\phi\land\neg\phi)}{\CM}{})\limp(\phi\land\neg\phi)$\hfill 1, 7, IL.
		\end{enumerate}

\subsection{Finite-model property}\label{appendix:FiniteModelProperty}
			\begin{itemize}
				\item $\sqsubseteq^{\mathrm{min},\Gamma}$ inherits 
						the reflexivity, transitivity, and anti-symmetry from $\sqsubseteq$ as
							can be seen by inspecting the definition of $\sqsubseteq^{\mathrm{min},\Gamma}$;
				\item ${\access{M}{\CM}{\mathrm{min},\Gamma}}$ inherits 
					seriality from ${\access{M}{\CM}{}}$, 
						as can be seen by inspecting the definition of ${\access{M}{a}{\mathrm{min},\Gamma}}$;
				\item for conditional reflexivity of ${\access{M}{\CM}{\mathrm{min},\Gamma}}$, suppose that
						$M\in\clo{\CM}{[s]_{\sim_{\Gamma}}}(\emptyset)$.
						Thus consecutively: 
							$[s]_{\sim_{\Gamma}}\in\mathcal{V}_{\Gamma}(\knows{\CM}{M})$ by definition,
							$s\in\mathcal{V}(\knows{\CM}{M})$ by definition,
							$M\in\clo{\CM}{s}(\emptyset)$ by definition,
							$s\access{M}{\CM}{}s$ by the conditional reflexivity of $\access{M}{\CM}{}$, and finally 
							$[s]_{\sim_{\Gamma}}\access{M}{\CM}{\mathrm{min},\Gamma}[s]_{\sim_{\Gamma}}$ by 
								definition;
				\item for the epistemic-image property of ${\access{M}{\CM}{\mathrm{min},\Gamma}}$, suppose that
						$[s]_{\sim_{\Gamma}}\access{M}{\CM}{\mathrm{min},\Gamma}[s']_{\sim_{\Gamma}}$.
						Thus consecutively: 
							$s\access{M}{\CM}{}s'$ by definition,
							$M\in\clo{\CM}{s'}(\emptyset)$ by the epistemic-image property of $\access{M}{\CM}{}$,
							$s'\in\mathcal{V}(\knows{\CM}{M})$ by definition,
							$[s']_{\sim_{\Gamma}}\in\mathcal{V}_{\Gamma}(\knows{\CM}{M})$ by definition, and finally
							$M\in\clo{\CM}{[s']_{\sim_{\Gamma}}}(\emptyset)$ by definition.
				\item For the MIAR-inclusion property of ${\access{M}{\CM}{\mathrm{min},\Gamma}}$, suppose that:
						\begin{itemize}
							\item 
							$[s]_{\sim_{\Gamma}}\access{\CM}{\CM}{\mathrm{min},\Gamma}[s']_{\sim_{\Gamma}}$.
						Thus consecutively:
							$s\access{\CM}{\CM}{}s'$ by definition,
							$s\sqsubseteq s'$ by MIAR-inclusion, and 
							$[s]_{\sim_{\Gamma}}\sqsubseteq^{\mathrm{min},\Gamma}[s']_{\sim_{\Gamma}}$ by definition. Proceed similarly for the converse.
						
							\item $[s]_{\sim_{\Gamma}}\access{M}{\CM}{\mathrm{min},\Gamma}[s']_{\sim_{\Gamma}}$.
						Thus consecutively:
							$s\access{M}{\CM}{}s'$ by definition,						
							$s\access{\CM}{\CM}{}s'$ by MIAR-inclusion, and 
							$[s]_{\sim_{\Gamma}}\access{\CM}{\CM}{\mathrm{min},\Gamma}[s']_{\sim_{\Gamma}}$ by definition.
					\end{itemize}
				\item For the special transitivity of ${\access{M}{\CM}{\mathrm{min},\Gamma}}$,
					suppose that 
						$$[s]_{\sim_{\Gamma}}\mathrel{(\access{\CM}{\CM}{\mathrm{min},\Gamma}\circ
														\access{M}{\CM}{\mathrm{min},\Gamma})}[s']_{\sim_{\Gamma}}\,.$$
					That is,
						there is $[s'']\in\states/_{\sim_{\Gamma}}$ such that 
							\begin{itemize}
								\item $[s]_{\sim_{\Gamma}}\access{\CM}{\CM}{\mathrm{min},\Gamma}[s'']_{\sim_{\Gamma}}$ and
								\item $[s'']_{\sim_{\Gamma}}\access{M}{\CM}{\mathrm{min},\Gamma}[s']_{\sim_{\Gamma}}$\,.
							\end{itemize}
					Thus consecutively:
						$s\access{\CM}{\CM}{}s''$ and $s''\access{M}{\CM}{}s'$ by definition,
						$s\access{M}{\CM}{}s''$ by MIAR-inclusion, and 
						$[s]_{\sim_{\Gamma}}\access{M}{\CM}{\mathrm{min},\Gamma}[s']_{\sim_{\Gamma}}$ by definition.				
				\item For the proof monotonicity of ${\access{M}{\CM}{\mathrm{min},\Gamma}}$, 
						suppose that $M\sqsubseteq_{\CM}M'$.
						Further suppose that 
							$[s]_{\sim_{\Gamma}}\access{M}{\CM}{\mathrm{min},\Gamma}[s']_{\sim_{\Gamma}}$.
						Thus consecutively:
							$s\access{M}{\CM}{}s'$ by definition,
							$s\access{M'}{\CM}{}s'$ by proof monotonicity, and 
							$[s]_{\sim_{\Gamma}}\access{M'}{\CM}{\mathrm{min},\Gamma}[s']_{\sim_{\Gamma}}$ by definition.					
			\end{itemize}

\end{document}